\newcommand{\A}{{\mathbb A}}
\newcommand{\Z}{{\mathbb Z}}
\renewcommand{\S}{{\mathbb S}}
\newcommand{\R}{{\mathbb R}}
\newcommand{\C}{{\mathbb C}}
\newcommand{\N}{{\mathbb N}}
\renewcommand{\H}{{\mathbb H}}
\def\Res{{\,\rm Res}}
\def\Re{{\rm Re}}
\def\Im{{\rm Im}}
\def\Id{{\rm Id}}
\def\ii{{\rm i}}
\def\xx{{\bf x}}
\def\sl{\mathfrak{sl}}
\def\su{\mathfrak{su}}
\def\tr{{\rm trace}}
\def\Area{{\rm Area}}
\def\SL{{\rm SL}}
\def\SU{{\rm SU}}
\def\z{\overline{z}}
\renewcommand{\matrix}[1]{\left(\begin{array}{cc} #1\end{array}\right)}
\newcommand{\minimatrix}[1]{\left(\begin{smallmatrix}#1\end{smallmatrix}\right)}
\newcommand{\wt}[1]{\widetilde{#1}}
\newcommand{\wh}[1]{\widehat{#1}}
\newcommand{\cal}[1]{{\mathcal #1}}
\theoremstyle{plain}
\newtheorem{theorem}{Theorem}
\newtheorem{lemma}{Lemma}
\newtheorem{proposition}[lemma]{Proposition}
\newtheorem{remark}[lemma]{Remark}
\newtheorem{corollary}[lemma]{Corollary}
\newtheorem{example}[lemma]{Example}
\begin{document}

\title{Area Estimates for High genus Lawson surfaces via DPW}

\author{Lynn Heller}

 \author{Sebastian Heller}
 
\author{Martin Traizet}

\noindent
\address{Institut f\"ur Differentialgeometrie\\
Welfengarten 1\\
30167 Hannover\\
Germany} 
\email{lynn.heller@math.uni-hannover.de}
 \noindent 
\address{Institut f\"ur Differentialgeometrie\\
Welfengarten 1\\
30167 Hannover\\
Germany} 
\email{seb.heller@gmail.com}
\noindent
\address{Institut Denis Poisson, CNRS UMR 7350 \\
Facult\'e des Sciences et Techniques \\
Universit\'e de Tours }
\email{martin.traizet@univ-tours.fr }

\begin{abstract}
Starting at a saddle tower surface, we give a new existence proof of the Lawson surfaces $\xi_{m,k}$ of high genus by deforming the corresponding DPW potential. As a byproduct, we obtain for fixed $m$ estimates on the area of  $ \xi_{m,k}$ in terms of their genus $g=m k \gg1$.
\end{abstract}

\thanks{We would like to thank Laurent Mazet for providing us with the monotonicity formula argument in the proof of Corollary 20. The first author is supported by the  {\em Deutsche Forschungsgemeinschaft} within the priority program {\em Geometry at Infinity}. The second author is funded by the {\em Deutsche Forschungsgemeinschaft} via GRK 1670.}
\maketitle
\setcounter{tocdepth}{1}
\tableofcontents
\section*{Introduction}\label{sec:intro}
\noindent
Minimal surfaces are important objects in differential geometry which have fascinated geometers for centuries. Depending 
 on the curvature of the ambient space, different techniques were developed to prove existence, uniqueness (possibly under certain 
 geometric constraints), and to study the space of 
 embedded minimal surfaces. In Euclidean space, minimal surfaces can be explicitly 
 parametrised via Weierstrass representation. Constructing minimal surfaces in a compact symmetric space -- such as the 
 round 3-sphere -- is much more involved. \\

\noindent
Examples of compact embedded minimal surfaces in the $3$-sphere of all genera were first found by Lawson \cite{Lawson} using the solution of the Plateau problem with respect to a polygonal boundary curve. Though enormous achievements have been made in the theory of minimal surfaces in positively curved 3-manifolds by Min-Max theory in recent years (see for example \cite{MarNev18} and references therein), we still lack knowledge about the simplest compact minimal surfaces of genus $g\geq2$ in the round $3$-sphere. For example, the area of these surfaces is still unknown and the index and stability for Lawson $\xi_{1,g}$-surfaces were only recently computed 
\cite{KapWiy}. \\

\noindent
It is well known that the Lawson surfaces $\xi_{m,k}$ converge for fixed $m$ and $k \rightarrow \infty$ to the union of $m+1$ great spheres intersecting in a great circle.
In this paper, we go backwards and construct Lawson surfaces $\xi_{m,k}$ for $k\gg 1$ by desingularizing the union of $m+1$ great spheres using a Karcher saddle tower, a minimal surface generalizing the classical Scherk surface (see Section \ref{sec:karcher}).
As a consequence,  our analysis determines the asymptotic behaviour of the area of the Lawson surfaces $\xi_{m,k}$ for $k\gg 1$ up to second order.
In particular, in the case $m=1$, Theorem \ref{the:area} gives 
\[\Area(\xi_{1,g})=8\pi \left(1-\frac{\ln 2}{2g}+\frac{\ln 2}{2g^2}+ O\left(\frac{1}{g^3}\right)\right)\]
for the area of the Lawson surface $\xi_{1,g}$ of genus $g$ with $g\gg 1$.
The Lawson surfaces  $\xi_{1,g}$ are conjectured to minimize the Willmore energy for surfaces of genus $g$ (\cite{kusner}, Conjecture 8.4). Since the area of a minimal surface in $\S^3$ is its Willmore energy, the above equation yields estimates for the conjectured minimum Willmore energy of compact surfaces of genus $g\gg 1$. In \cite{KLS}, the large genus limit of the minimal Willmore energy is shown to be $8 \pi$, giving some evidence to the Kusner conjecture.
\medskip

\noindent
Desingularization is a well established and productive method to construct minimal surfaces in various spaces using PDE methods (see for example \cite{KapYan}
for an example of such construction in the 3-sphere). However, these methods would not give such fine area estimates as the ones that we obtain in this paper. We shall carry out the construction using integrable system methods, which in essence allow for more explicit formulas.
\medskip

\noindent
In this paper we consider 
 a conformally parametrised minimal immersion $f$ from a Riemann surface $\Sigma$ into the round $3$-sphere. The harmonicity of $f$  gives rise to a symmetry of the Gauss-Codazzi equations in the 3-sphere inducing an associated family of (isometric) minimal surfaces on the universal covering of $\Sigma$ with rotated Hopf differential. This family of surfaces is the geometric counterpart of an associated $\C_*$-family 
of flat $\text{SL}(2,\mathbb C)$-connections $\nabla^\lambda$  \cite{hitchin} on the trivial $\C^2$-bundle over $\Sigma$ satisfying 
\begin{enumerate}\label{closingconditions}
\item[(i)] conformality:  $\nabla^\lambda=\lambda^{-1}\Phi+\nabla+\lambda \Psi$ for a nilpotent $\Phi\in \Omega^{1,0}(\Sigma,\mathfrak{sl}(2,\mathbb C)) ;$
\item[(ii)]  intrinsic closing: $\nabla^\lambda$ is unitary for all $\lambda\in\S^1,$ i.e., $\nabla$ is unitary and $\Psi=\Phi^*$ with respect to the standard hermitian metric on $\underline{\C}^2$;
\item[(iii)] extrinsic closing: $\nabla^\lambda$ is trivial for $\lambda=\pm1.$
\end{enumerate}
The minimal surface can be reconstructed from the associated family of connections as the gauge between
$\nabla^{-1}$ and $\nabla^1.$ Constructing minimal surfaces is thus equivalent to writing down appropriate families of flat connections.\\

\noindent
 The DPW method \cite{DPW}, which can be viewed as a generalisation of the Weierstrass representation for minimal surfaces in Euclidean space,  is a way to generate families of flat connections from
so-called {\em DPW potentials} on $\Sigma$, denoted by  $\eta = \eta^\lambda$, using loop group factorisation methods.
We summarise the basic procedure in Section \ref{sec:DPW}. On simply connected domains $\Sigma$, all DPW potentials give rise to minimal surfaces. Whenever the domain has non-trivial topology, finding DPW potentials satisfying  
conditions equivalent to (i),(ii) and (iii) is difficult. 
So far, only special surface classes, such as trinoids \cite{SKKR}, tori, and more recently  $n$-noids were constructed using DPW \cite{nnoids,minoids}. In this paper we give the first existence proof of  closed embedded  minimal surfaces of high genus in the 3-sphere via DPW.\\

\noindent
The outline of the paper is as follows.
We start with recalling the classical construction of Lawson surfaces, the Weierstrass representation of Karcher saddle tower surfaces, and some general facts concerning loop groups and DPW method in Section \ref{sec:basics}.
In Section \ref{sec:tech}, we propose a family of  DPW potentials for Lawson surfaces.
Because of symmetries, Lawson surface $\xi_{m,k}$ is a $(k+1)$-sheeted branched cover of the Riemann sphere. We choose the DPW potential $\eta$ to be well defined on the Riemann sphere, with simples poles at the branch points of the covering.
Our potential $\eta=\eta_t$ actually depends on a small real parameter $t$ and
closed minimal surfaces are recovered when $t=\frac{1}{2k+2}$.
The Monodromy Problem is solved using the Implicit Function Theorem at $t=0$.
The strategy here is analogous to \cite{nnoids}, \cite{minoids} and similar to \cite{HHS}.
In the DPW setup, the area of a minimal surface can be computed explicitly from the DPW potential, see Corollary \ref{cor:areares}. Thus  we compute the time derivative of $\eta_t$ at $t=0$  up to order 2 in Section \ref{sec:der}. The constructed family of surfaces are identified to be the Lawson surfaces $\xi_{m,k}$ in Section \ref{sec:construction} for $k$ sufficiently large. Finally, 
using the derivatives of $\eta_t$ computed in Section \ref{sec:der} we obtain an asymptotic expansion of the area of high genus Lawson surfaces.

\section{Preliminary}\label{sec:basics}
\noindent
In order to fix notations and to be self-contained
we shortly recall the construction of Lawson surfaces, saddle towers as well as general facts about loop groups and DPW.
\subsection{Lawson surfaces} $\; $\\
The original construction of the Lawson surfaces  \cite{Lawson}
$$\xi_{m,k} : \Sigma \longrightarrow \S^3$$
  uses the solution to the Plateau problem. Consider two orthogonal great circles $C_1$ and $C_2$ in the round 3-sphere. Let $P_1,..,P_{2m+2}$ denote $(2m+2)$ equidistant points  on $C_1$,
and $Q_1,..,Q_{2k+2}$ denote $(2k+2)$ equidistant points on $C_2$. For the convex geodesic polygon
\[\overline{P_1Q_1P_2Q_2}\]
the corresponding Plateau solution, see Figure \ref{fig:plateau}, is a minimal surface in $S^3$. A closed minimal surface is obtained from this fundamental piece by repeatedly reflecting it across its geodesic boundaries. The resulting surfaces are called Lawson surfaces $\xi_{m,k}$, are embedded and of genus $g=m\cdot k$.\\
 \begin{figure}[h]
\centering
\includegraphics[width=0.375\textwidth]{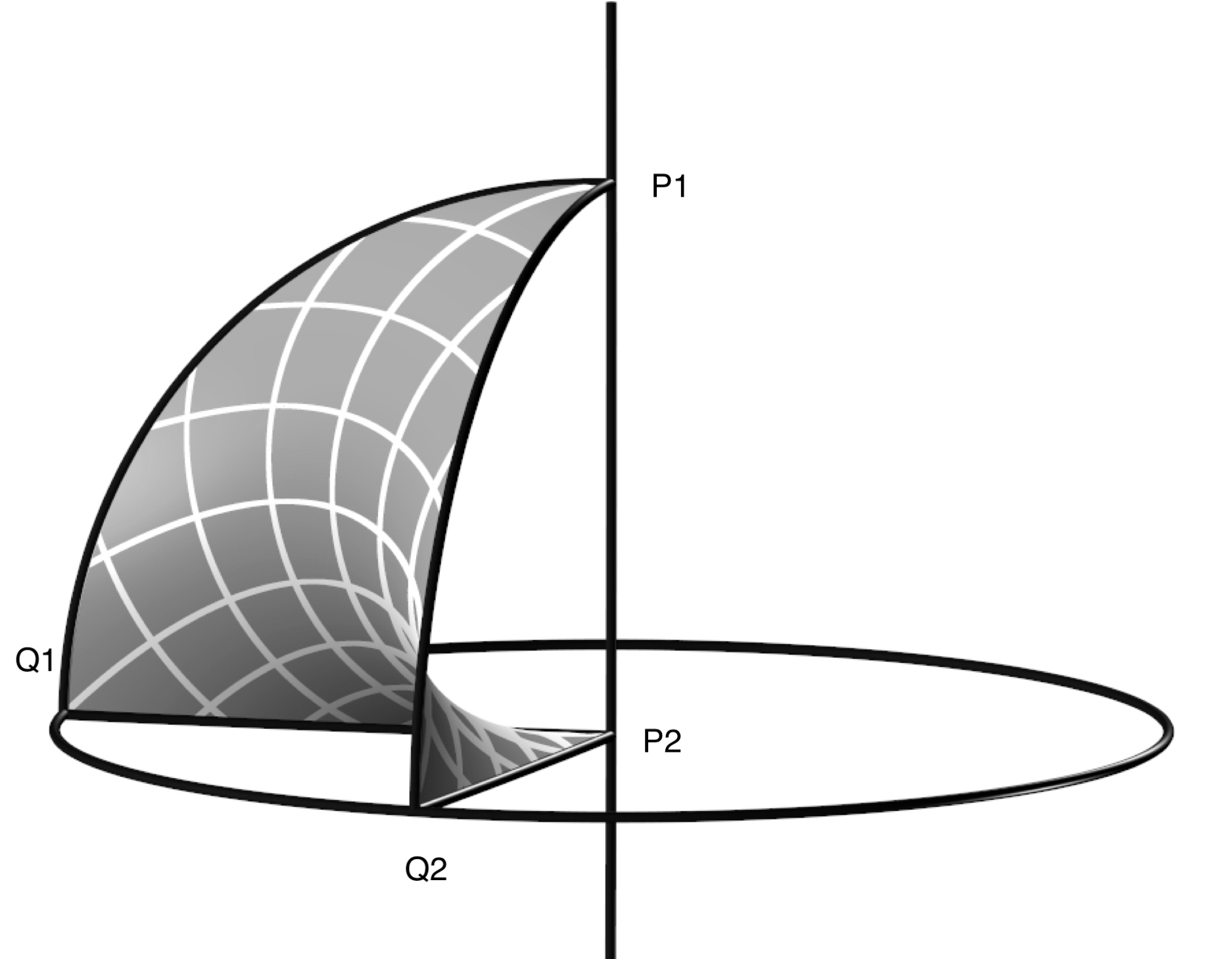}
\includegraphics[width=0.375\textwidth,angle=-1]{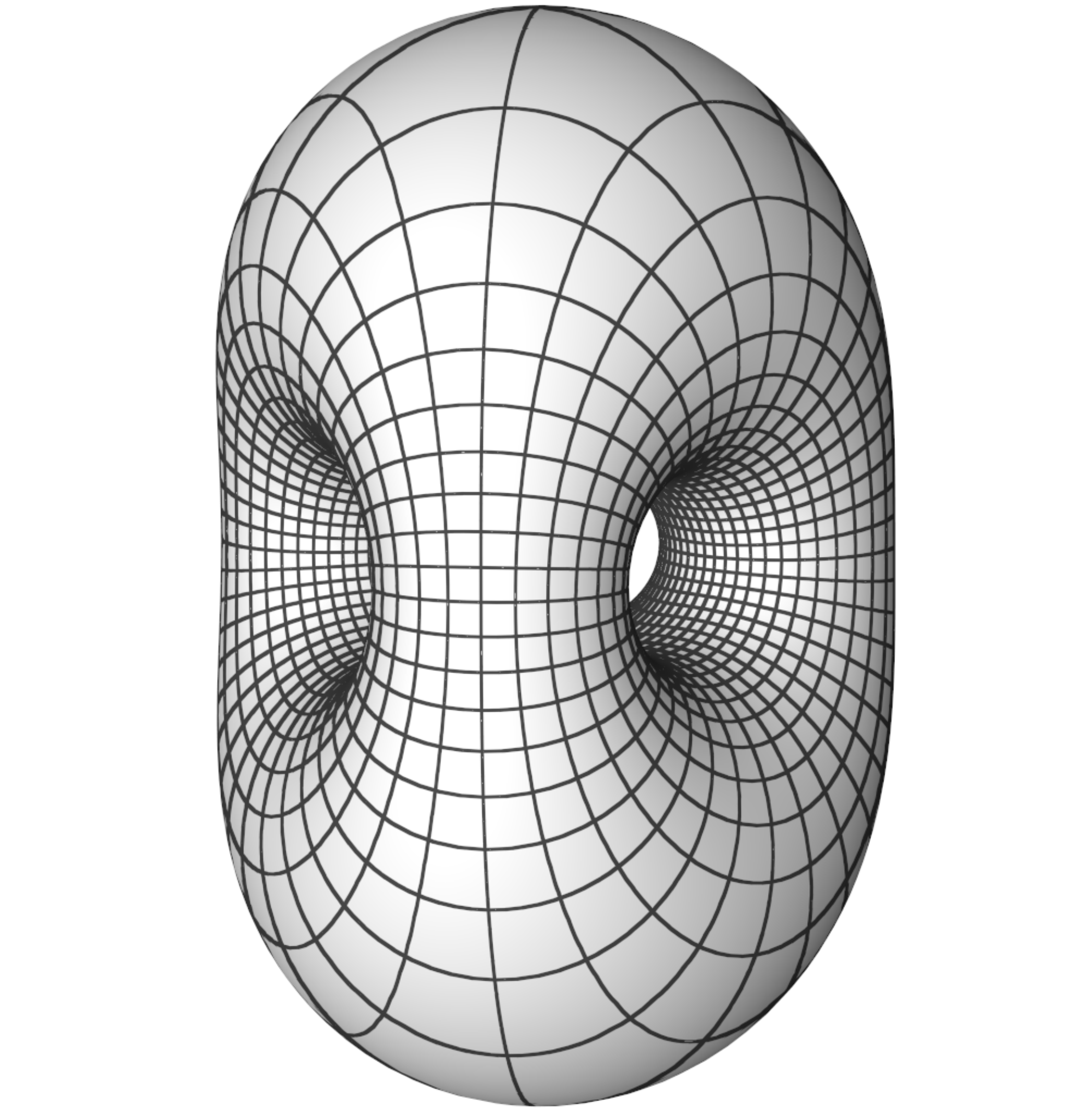}
\hspace{0.25cm}
\caption{
\footnotesize
The Plateau solution of a geodesic 4-gon in the 3-sphere and the Lawson surface of genus 2, stereographically projected to the Euclidean space. 
Images by Nicholas Schmitt with xLab.
}
\label{fig:plateau}
\end{figure}

\noindent
By construction the Lawson surfaces possess a large symmetry group. The subgroup of orientation preserving symmetries (both on the surface and in 3-space)
contains

\[\Z_{m+1}\times\Z_{k+1},\]
where the action is the natural rotation in the planes spanned by the circles $C_1$ and  $C_2,$ respectively.\\

\noindent
The minimal surface $\xi_{m,k}$ induces a Riemann surface structure on $\Sigma.$ The quotient of the Riemann surface by
the symmetries $\Z_{m+1}$ and  $\Z_{k+1},$ respectively, is $\C P^1$ and the covering $\Sigma \rightarrow \C P^1$ is totally branched over $2k+2$ and respectively $2m+2$
points. Using the additional reflection symmetries, these 
$2k+2$ and respectively $2m+2$
points are in equidistance on the unit circle of the (round) 2-sphere.
\begin{remark}
Since the surfaces $\xi_{m,k}$ and $\xi_{k,m}$ are isometric,  the Lawson surfaces $\xi_{k,k}$ admit an additional orientation preserving symmetry.
\end{remark}

\noindent
All Lawson surfaces admit additional symmetries which  are not orientation preserving in space or not orientation preserving on the surfaces. They are given by reflections across geodesics  contained in the surfaces (e.g., the polygonal boundary of the fundamental piece) or  by reflection across geodesic 2-spheres which intersect the surface orthogonally, e.g., symmetry planes of the geodesic polygon.

 \begin{figure}
\centering
\includegraphics[width=0.375\textwidth,angle=-1]{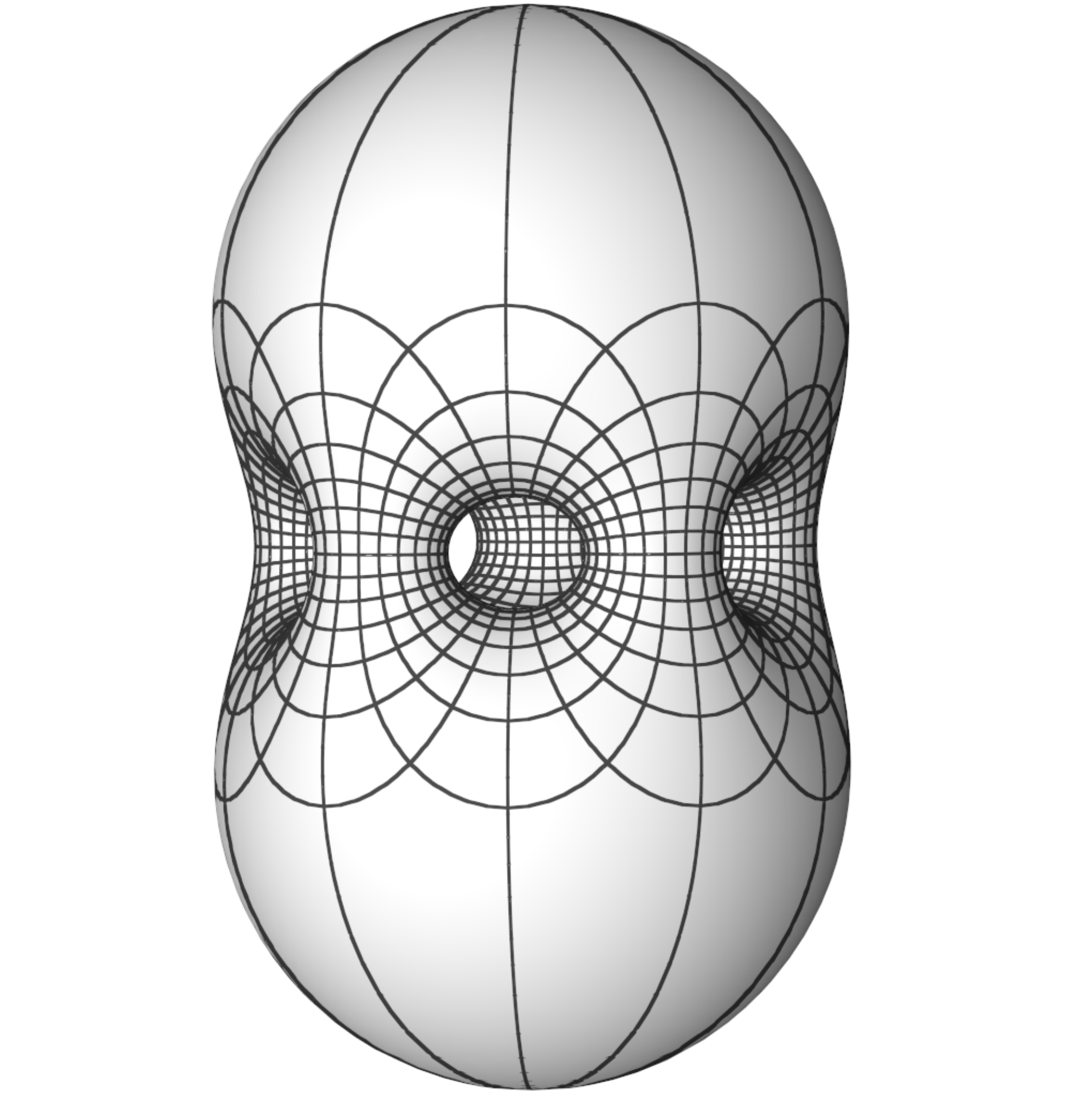}
\includegraphics[width=0.375\textwidth,angle=-1]{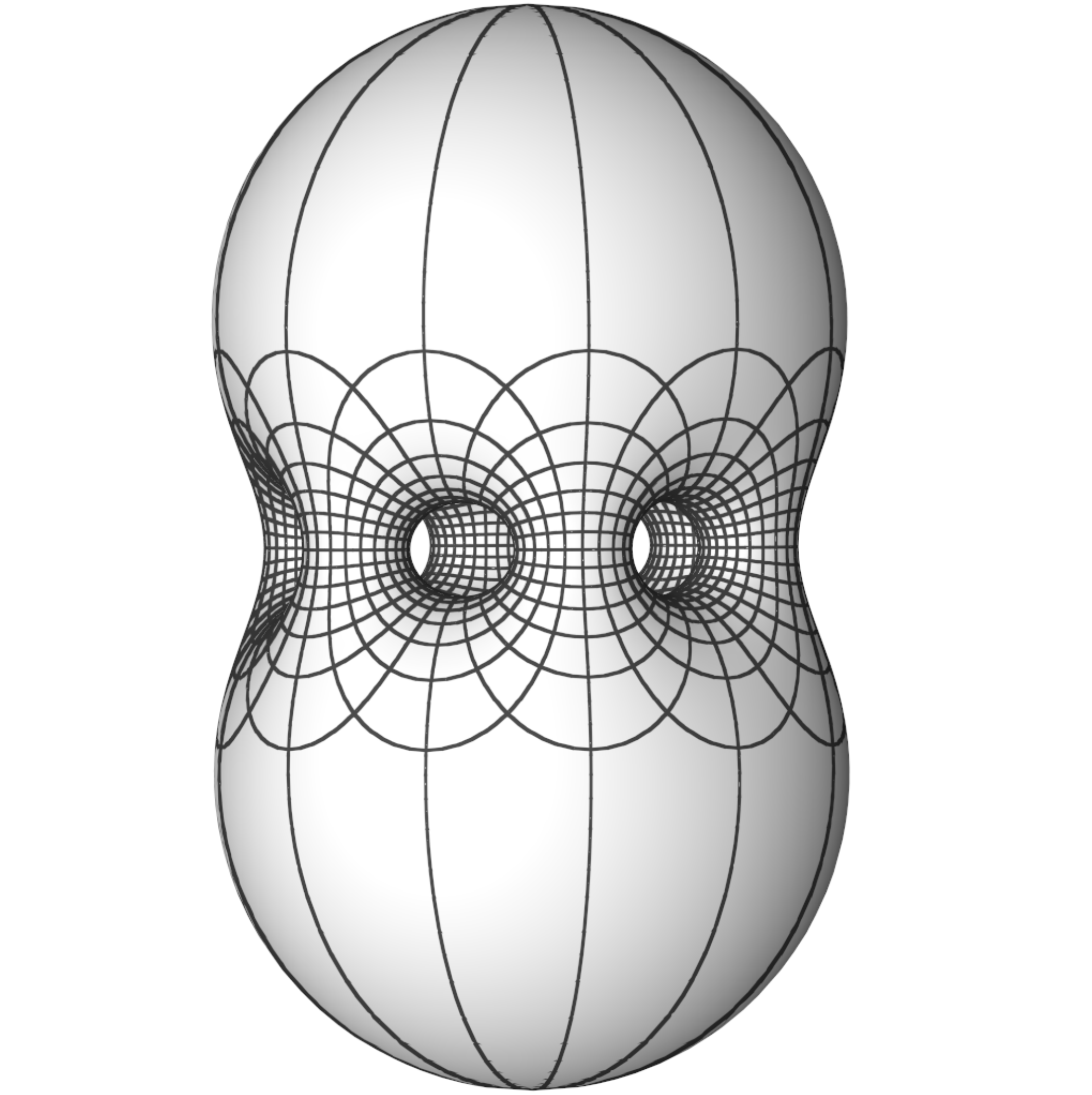}
\caption{
\footnotesize
The Lawson surfaces of genus 4 and 5.
}
\label{fig:l5}
\end{figure}

\subsection{Saddle Tower Surfaces}\label{sec:karcher}$\; $\\
\noindent
 Karcher \cite{Kar} generalised Scherk's singly periodic surface 
 to surfaces with  $n=2m+2$ Scherk type ends and constant angle $\frac{2\pi}{2m+2}$ between consecutive ends, see the  figure  in \cite{Kar}. These surfaces are called saddle tower surfaces and their Weierstrass data 
are given by

\begin{equation}\label{saddletower}
g=\frac{\ii}{z^m} \quad \text{ and } \quad
\omega=\frac{2n z^{2m}\,dz}{z^{2m+2}-1}.
\end{equation} 
\subsection{Loop groups}$\; $\\
In the following we give a comprehensive introduction to the theory of loop groups which contains only relevant theorems and facts with regard to the paper. For details we refer to \cite{PrSe}.
Let  $G$ be a finite dimensional real Lie group with Lie algebra $\mathfrak g$.  We define the loop spaces
\begin{itemize}
\item $\Lambda G:= \{$ real analytic maps (loops) $\Phi\colon \S^1\longrightarrow G,\quad \lambda \longmapsto \Phi^\lambda\};$

\item $\Lambda \mathfrak{g}:=\{$ real analytic maps (loops) $\eta\colon \S^1\longrightarrow\mathfrak{g},\quad  \lambda \longmapsto \eta^\lambda\}.$ 
\end{itemize}
$\Lambda G$ is an infinite dimensional Frechet Lie group via pointwise multiplication with $\Lambda \mathfrak{g}$ as its
Lie algebra. For a complex Lie group $G^\C$ we denote

\[\Lambda_+G^\C=\{\Phi\in\Lambda G^\C\mid \Phi \text{ extends holomorphically to } \lambda=0\}\]

\noindent
and

\[\Lambda_+\mathfrak g^\C=\{\eta\in\Lambda \mathfrak g^\C\mid \eta \text{ extends holomorphically to } \lambda=0\}.\]

\noindent
In the particular case of $G^\C=\text{SL}(2,\C)$ we denote

\[\mathcal B=\{B\in\SL(2,\C)\mid B \text{ is upper triangular with positive diagonal entries}\}\]

\noindent
and

\[\Lambda_+^\R\text{SL}(2,\C)=\{B\in\Lambda_+\text{SL}(2,\C)\mid B(0) \in \mathcal B\}.\]

\noindent
We will make use of the following theorem, often referred to as Iwasawa decomposition:

\begin{theorem}[\cite{PrSe}]\label{Iwasawa}
Let $\Phi\in\Lambda\text{SL}(2,\C)$. Then there exist a splitting 
 $$\Phi=F\cdot B$$
 with $F\in \Lambda \text{SU}(2)$ and $B \in \Lambda_+^\R\text{SL}(2,\C).$
This splitting is unique and depends real analytically on $\Phi.$ The pair $(F,B)$ is called the Iwasawa decomposition
of $\Phi$.\\
\end{theorem}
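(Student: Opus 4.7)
The plan is to recognize the assertion as a statement that the group multiplication map
\[
\mu\colon \Lambda\SU(2)\times\Lambda_+^\R\SL(2,\C)\longrightarrow \Lambda\SL(2,\C),\qquad (F,B)\longmapsto F\cdot B
\]
is a real-analytic diffeomorphism of Fr\'echet Lie manifolds. Real-analytic dependence of $(F,B)$ on $\Phi$ is then automatic from the inverse function theorem applied to $\mu^{-1}$, so the task decomposes into (i) showing that $d\mu$ is a topological isomorphism of Fr\'echet spaces, (ii) verifying injectivity, and (iii) establishing surjectivity.

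For (i), the natural target is the Lie algebra splitting
\[
\Lambda\sl(2,\C)=\Lambda\su(2)\oplus\Lambda_+^\R\sl(2,\C),
\]
which I would check directly on Fourier modes. Writing $\xi=\sum_n\xi_n\lambda^n$, the reality condition $\eta(\lambda)^*=-\eta(1/\overline\lambda)$ on $\Lambda\su(2)$ and the requirement that the $+$-part extend holomorphically to $\lambda=0$ with upper-triangular real-diagonal value there force a unique solution: for $n>0$ the unitary summand has $n$-th coefficient $-\xi_{-n}^*$ and the $+$-summand has coefficient $\xi_n+\xi_{-n}^*$, while for $n=0$ one invokes the classical finite-dimensional Iwasawa decomposition $\sl(2,\C)=\su(2)\oplus\mathfrak b$. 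These projections manifestly preserve the exponential decay of Fourier coefficients that defines the real-analytic Fr\'echet topology, so the splitting is continuous; translation by arbitrary group elements then promotes invertibility of $d\mu$ from the identity to every point.

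For (ii), suppose $F_1B_1=F_2B_2$; then $A:=F_2^{-1}F_1=B_2B_1^{-1}$ lies in $\Lambda\SU(2)\cap\Lambda_+\SL(2,\C)$. I would define $\widehat A$ on $\C P^1$ by $\widehat A(\lambda)=A(\lambda)$ for $|\lambda|\le1$ and $\widehat A(\lambda)=(A(1/\overline\lambda)^*)^{-1}$ for $|\lambda|\ge1$; the two pieces agree on $S^1$ by unitarity of $A$ and are holomorphic on their respective disks, so $\widehat A$ is a holomorphic map $\C P^1\to\SL(2,\C)$ and therefore constant. The constant necessarily lies in $\SU(2)\cap\mathcal B$, which under the customary positivity normalisation of the diagonal entries of $B(0)$ reduces to $\{I\}$, yielding uniqueness.

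For (iii), the genuine obstacle, surjectivity is equivalent to a matrix spectral factorization problem: the identity $\Phi=FB$ forces $\Phi^*\Phi=B^*B$ on $S^1$, so given the positive Hermitian loop $H:=\Phi^*\Phi$ with $\det H=1$ we must produce $B\in\Lambda_+^\R\SL(2,\C)$ satisfying $H=B^*B$. Pointwise Cholesky decomposition provides a merely continuous candidate; upgrading it to a holomorphic $B$ requires the Birkhoff/Grothendieck-type factorization developed in the Grassmannian model of \cite{PrSe}, equivalently the invertibility of the Toeplitz-type operator governing the $\Lambda_+$-projection. This global analytic input is the hard point of the proof, since (i) and (ii) only yield local and uniqueness information; once it is supplied, $\mu$ is a real-analytic bijection with everywhere-invertible derivative and therefore a real-analytic diffeomorphism, which is precisely the statement of the theorem.
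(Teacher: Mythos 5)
This theorem is not proved in the paper at all: it is quoted from Pressley--Segal \cite{PrSe}, so there is no in-paper argument to compare yours against. Your outline reproduces the standard architecture of the proof, and the two parts you actually carry out are essentially correct: the Fourier-mode computation of the splitting $\Lambda\sl(2,\C)=\Lambda\su(2)\oplus\Lambda_+^{\R}\sl(2,\C)$ is right (for $n<0$ the unitary summand must absorb $\xi_n$, which forces the coefficients you write for $n>0$, and $n=0$ is finite-dimensional Iwasawa at the Lie-algebra level, where the sum is genuinely direct), and the uniqueness argument by Schwarz reflection of $A=F_2^{-1}F_1$ across $S^1$ followed by Liouville is the standard one. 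One small caveat there: with the paper's normalisation ($B(0)$ upper triangular with \emph{real} diagonal entries) one has $\SU(2)\cap\mathcal B=\{\pm\Id\}$, so uniqueness holds only up to sign unless one insists on positive diagonal entries; you flag this, and it is an imprecision in the statement rather than in your argument.

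The genuine gap is twofold. First, the surjectivity step, which is the entire analytic content of the theorem, is explicitly outsourced in your write-up to the Birkhoff/Grassmannian factorization of \cite{PrSe} --- that is, to the very result being cited --- so what you have is a correct reduction to the matrix spectral factorization $H=B^*B$ with $B\in\Lambda_+^{\R}\SL(2,\C)$, not a proof of it. Second, ``real-analytic dependence via the inverse function theorem applied to $\mu^{-1}$'' is not available off the shelf: the space of real-analytic loops is not a Banach space and the inverse function theorem fails in general Fr\'echet settings, which is precisely why this paper works with the Banach completions $\Lambda\SL(2,\C)_\rho$ and invokes Theorem 5 of \cite{minoids} for smoothness of the factorization there. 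Invertibility of $d\mu$ plus bijectivity does not by itself yield a diffeomorphism in this category without that additional structure. As a reduction with correct uniqueness and linearization your proposal is sound, but it should not be mistaken for a self-contained proof.
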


\subsection{The DPW method}\label{sec:DPW}$\; $\\
Let $\Sigma$ be a Riemann surface. 
A DPW potential on $\Sigma$ is a closed 1-form

\[\eta\in\Omega^{1,0}(\Sigma,\Lambda \mathfrak{sl}(2,\C))\]

\noindent
with  

\[\lambda \eta\in \Omega^{1,0}(\Sigma,\Lambda_+\mathfrak{sl}(2,\C))\]

\noindent
such that  its residue at $\lambda=0$
 
 \noindent
\[\eta_{-1}=\text{Res}_{\lambda=0} (\eta)\]

\noindent
is a nowhere vanishing and nilpotent 1-form.\\

\noindent
A DPW potential $\eta$ gives rise to a loop of flat $\text{SL}(2,\C)$-connections. Let $\widetilde \Sigma$ denote the universal covering of $\Sigma$ and let

\[\Phi\colon \widetilde\Sigma\longrightarrow \Lambda\text{SL}(2,\C)\] 

\noindent
be the solution of the ODE 
\begin{equation}
\label{eq:ODE}d_\Sigma\Phi=\Phi \cdot \eta\end{equation}
\noindent
with initial value $\Phi(p) \in\Lambda\text{SL}(2,\C)$.  Then the Iwasawa decomposition $(F,B)$  of $\Phi$ gives smooth maps

\[F\colon\widetilde\Sigma\longrightarrow\Lambda\text{SU}(2) \quad \text{ and } \quad B\colon\widetilde\Sigma\longrightarrow\Lambda_+^\R\text{SL}(2,\C)\]

\noindent
and the associated family of flat connections of a minimal surface \cite{Bobenko_1991,hitchin} 

\[f\colon\tilde\Sigma\longrightarrow\S^3\]
is given by $\nabla^\lambda = d_\Sigma+(F^\lambda)^{-1}d_\Sigma F^\lambda$ satisfying

\begin{equation}\label{nablalambda}
d_\Sigma+F^{-1}d_\Sigma F=(d_\Sigma+\eta).B^{-1}=d_\Sigma+B \eta B^{-1}-d_\Sigma B B^{-1}.
\end{equation}
\noindent

\noindent
Identifying $\S^3\cong\text{SU}(2)$, the surface can therefore be reconstructed by the Sym-Bobenko formula 
\begin{equation}\label{symbob}f=F^{\lambda=1}(F^{\lambda=-1})^{-1}.\end{equation}\\

\noindent In this paper we are interested in constructing compact minimal surfaces with nontrivial topology. Thus we start with a DPW potential defined on such a Riemann surface $\Sigma$. The so-constructed minimal surface is well-defined on $\Sigma$ if its associated family of flat  connections $\nabla^\lambda$ satisfies the closing conditions (i)-(iii). For the corresponding DPW potential it is sufficient to have

\begin{enumerate}\label{rem:closing}
\item[(i)] $B$ has trivial monodromy, i.e., $B$ is well-defined on $\Sigma;$
\item[(ii)] the connections $d_\Sigma+\eta^{\lambda=\pm1}$ have trivial monodromy.
\end{enumerate}

\noindent
 Let $\gamma\in\pi_1(\Sigma,z_0)$ and let $\cal{M}(\Phi,\gamma)$ denotes the monodromy of $\Phi$ with respect to $\gamma.$ In terms of $\Phi$, the condition on the DPW potential is equivalent to:
\begin{equation}
\label{monodromy-problem}
\left\{\begin{array}{l}
\cal{M}(\Phi,\gamma)\in \Lambda SU(2)\\
\cal{M}(\Phi,\gamma)|_{\lambda=1}=\cal{M}(\Phi,\gamma)_{\lambda=-1}=\pm\Id_2\end{array}\right.\end{equation}
We refer to these conditions in \eqref{monodromy-problem} as the {\em Monodromy Problem}.

\subsubsection{Gauge freedom and apparent singularities}\label{subsubapp} $\; $\\ 
A DPW potential $\eta$ is not uniquely determined by its minimal immersion $f$. We rather have a gauge freedom. 
Consider a DPW potential $\eta$ on $\Sigma,$ and a holomorphic map 

$$\tilde B\colon \Sigma\longrightarrow\Lambda_+\SL(2,\C).$$ The gauged potential is defined to be

\begin{equation}\label{def:gauge}\tilde \eta = \eta.\tilde B:= \tilde B^{-1}\eta\tilde B+ \tilde B^{-1}d_\Sigma\tilde B .\end{equation}

\noindent
Due to the positivity of $\tilde B$, $\tilde \eta$ is again a DPW potential. Moreover,  
\[\tilde \Phi=\Phi \tilde B.\]

\noindent
is the unique solution of 

\[d_\Sigma\tilde\Phi=\tilde\Phi\tilde \eta \quad  \text{ with initial condition } \quad \tilde\Phi(p)= \Phi(p) \tilde B(p).\]

\noindent
 Let $F_0B_0=B^{\lambda=0}\tilde B^{\lambda=0}$ be the finite dimensional Iwasawa decomposition into a unitary and an upper triangular matrix with positive diagonal entries. Then

\[\tilde\Phi=\left (FF_0\right)\left(B_0  (F_0B_0)^{-1} B\tilde B\right)\]
and 

\[\tilde F = F F_0 \quad \quad  \tilde B =  B_0(F_0B_0)^{-1} B\tilde B\]
is the Iwasawa decomposition of $\tilde\Phi.$ Therefore, the two DPW potentials $\eta $ and $\tilde \eta$ yield the same minimal immersion $f$ via Sym-Bobenko formula \eqref{symbob} . \\

\noindent
In particular, certain singularities of $\eta$ can be removed using the gauge freedom. Let $\eta$ be a meromorphic potential with a singularity at
$q\in\Sigma$ .  If there exists a positive gauge  $\tilde B\colon\Sigma\setminus\{q\}\to\Lambda_+\SL(2,\C)$ such that
$\tilde B.\eta$ extends holomorphically to $q, $ then the surface obtained from $\eta$ extends real analytically to $q.$
Singularities of this type are called {\it{apparent} singularities}.
\begin{remark}
\label{remark:apparent-singularities}
In order to obtain a compact minimal surface $f\colon \Sigma\to\S^3$, its DPW potential necessarily has apparent singularities.
This follows for instance from the area formula in Corollary \ref{cor:areares}.
\end{remark}

\begin{remark}
The DPW method can be generalised to potentials $\eta^\lambda$ that are only defined for $\lambda \subset D_r = \{\lambda \in \C^* \mid |\lambda| \leq r\}$ for $r \in (0, 1]$. Details and proofs can be found in \cite{SKKR} and \cite{KRS}. 
\end{remark}

\section{A DPW potential for Lawson surfaces of high genus}\label{sec:tech}

\noindent
To choose our potential we take advantage of the symmetries of Lawson surface $\xi_{m,k}$.
We assume the potential to be invariant under the $\Z_{k+1}$ action as in \cite{He}.
In this section we show the existence of DPW potentials on a $(2m+3)$-punctured sphere 
$$\C\setminus \{p_0, ..., p_{2m+1}\}$$
with an apparent singularity at $ z = \infty$ such that the Monodromy Problem \eqref{monodromy-problem} is solved on a finite cover $\Sigma$ of the punctured sphere, branched at $p_j, j \in \{0, ...2m+1\}$. 
This gives rise to countably infinite many compact and embedded minimal surfaces in $\S^3.$ In section \ref{sec:construction} we show that these minimal surfaces coincide with the Lawson surfaces $\xi_{m,k}$ for $k \gg1.$

\subsection{Notations for functional spaces}\label{section-notations}$\;$\\
We follow the notations set in \cite{nnoids}: 
For  $f\in L^2(\S^1,\mathbb C)$ consider its Fourier series
\[f=\sum_k f_k \lambda^k.\] For $\rho>1$ define

\[\parallel f\parallel_\rho=\sum |f_k| \rho^{|k|}\leq\infty\]
and let

 \[\mathcal W_\rho:=\{f\in L^2\mid \parallel f\parallel_\rho<\infty\}\] 
be the set of Fourier series absolutely convergent  on the annulus 

$$\A_{\rho}=\{\lambda\in\C\mid \tfrac{1}{\rho} < |\lambda| <\rho\}.$$
\begin{remark}
The notation is also used for arbitrary loop spaces $\mathcal H$:
$\mathcal H_\rho$ denotes the subspace of $\mathcal H$ of loops whose entries
are in $\mathcal W_\rho$.
Then $\Lambda SL(2,\C)_{\rho}$, $\Lambda SU(2)_{\rho}$ and $\Lambda_+^{\R} SL(2,\C)_{\rho}$
are Banach Lie groups and Iwasawa decomposition is a smooth diffeomorphism from
$\Lambda SL(2,\C)_{\rho}$ to $\Lambda SU(2)_{\rho}\times\Lambda_+^{\R} SL(2,\C)_{\rho}$
(see  Theorem 5 in \cite{minoids}).
 \end{remark}
\noindent
Moreover, let

\[\cal{W}^{\geq 0}_\rho:=\{f=\sum_k f_k \lambda^k\in \mathcal W_\rho\mid f_k=0 \;\forall \; k<0\}\]
denote the space of those  loops $f\in L^2(\S^1,\mathbb C)$ that can be extended to a holomorphic function on the unit disc.  Similarly, let
\[\cal{W}^{>0}_\rho:=\{f=\sum_k f_k \lambda^k\in \mathcal W_\rho\mid f_k=0 \;\forall \; k\leq0\}\]
\[\cal{W}^{<0}_\rho:=\{f=\sum_k f_k \lambda^k\in \mathcal W_\rho\mid f_k=0 \;\forall \; k\geq0\}\]
denote the positive and negative space, respectively. Therefore we can decompose every $f\in \mathcal W_\rho$
$$f=f^+ + f^0+ f^-$$
into its positive and negative component $f^\pm \in W^{\gtrless 0}_\rho$, and a constant component $f^0 = f_0$. \\

\noindent
On $\mathcal W_\rho$ there exists two important involutions. The first is

  \[^*\colon \mathcal W_\rho\longrightarrow\mathcal W_\rho; f \longmapsto f^*,\]
  
\noindent 
where $f^*$ is determined by
 \[f^*(\lambda)=\overline{f\left(\frac{1}{\overline{\lambda}}\right)} \quad \text{ for } \lambda \in \A_{\rho}.\]

\noindent
The second involution is the conjugation of $f \in \mathcal W_\rho$ defined by:

\begin{equation}
\label{eq:conjugate}\overline{f}(\lambda)=\overline{f(\overline{\lambda})}.\end{equation}

\noindent
Let $\cal{W}_{\R}$, $\cal{W}^{\geq 0}_{\R}$ etc. denote real subspaces of $\cal{W}_\rho$, $\cal{W}^{\geq 0}_{\rho}$ 
satisfying $\overline{f}=f$. Functions in $\cal{W}_{\R}$ can be decomposed as
$f(\lambda)=\sum f_k \lambda^k$ with real coefficients $f_k\in\R$.
Observe that conjugation and star commute:
$$\overline{u^*}(\lambda)=\overline{u}^*(\lambda)=u\left(\frac{1}{\lambda}\right).$$
\begin{remark}
The notations for the decomposition of $\mathcal W_\rho$ into $\mathcal W ^{\gtrless 0}$ etc, the involutions and the real subspaces carry over to loop spaces $\cal H$.\end{remark}

 \subsection{Convergence to a Saddle Tower}
\label{sec:scherk}$\;$

\noindent
The Lawson surfaces $\xi_{m,k}$ converge for $m$ fixed and $k \rightarrow \infty$ to the union
of m + 1 great spheres intersecting in a great circle.
Moreover, the blow-up of $\xi_{m,k}$ 
converges for $k\to\infty$
to a saddle tower with $2m+2$ ends.
The following blow-up result is adapted from Theorem 4 in \cite{minoids}. Though written for CMC surfaces in $\R^3$, an analogue statement also holds for the ambient space $\S^3$.
We omit its proof, as we will only use it as a heuristic to construct our potential for Lawson surfaces.

\begin{theorem}\label{blowuplimit}
Let $\Sigma$ be a Riemann surface, $\epsilon >0$ and $I = (-\epsilon, \epsilon) \subset \R.$
Moreover, let $(\eta_t)_{t\in I}$ a family of DPW potentials on
$\Sigma$ and $(\Phi_t)_{t\in I}$ the corresponding family of solutions.
Fix a base point $z_0\in\widetilde{\Sigma}$ and assume 
\begin{enumerate}
\item $(t,z)\mapsto \eta_t(z,\cdot)$ and $t\mapsto \Phi_t(z_0,\cdot)$ are $C^1$ maps into
$\left(\Lambda\sl(2,\C)\right)_{\rho}$ and $\left(\Lambda SL(2,\C)\right)_{\rho}$, respectively.
\item $\Phi_t$ solves the Monodromy Problem \eqref{monodromy-problem} for all $t\in I$.
\item $\Phi_0(z,\lambda)$ is independent of $\lambda$:
$$\Phi_0(z,\lambda)=\matrix{\alpha(z)&\beta(z)\\\gamma(z)&\delta(z)}.$$
\end{enumerate}
Let $f_t
:\Sigma\to\S^3\cong SU(2)$ be the corresponding family of minimal immersions via DPW. (Since $F_0(z)$ is independent of $\lambda$, $f_0\equiv \Id$.)
Then 
$$\psi \colon \Sigma\longrightarrow T_{\Id} SU(2) \cong  \R^3,  \quad \psi(z):= \lim_{t\rightarrow 0}\frac{1}{t}\left(f_t(z)-\Id\right)$$ 
 is a well-defined and (possibly branched) minimal immersion with the following Weierstrass data (with "vertical" axis $x_2$ and "horizontal" axes $x_3,x_4$ in the tangent plane $x_1=1$ of $\S^3$ at $\Id$):
$$g(z)=\frac{\ii \alpha(z)}{\gamma(z)}\quad\mbox{ and }\quad
\omega=-4
\gamma(z)^2\Res_{\lambda}\left (\frac{\partial \eta_{t;12}}{\partial t}|_{t=0} \right),$$
where $\eta_{t; 12}$ is the upper right entry of the $2\times 2$ potential $\eta_t$ and the residue taken with respect to its expansion in $\lambda.$ 
The convergence is hereby uniform $C^1$on compact subsets of $\Sigma$.
\end{theorem}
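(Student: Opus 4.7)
I propose to compute $\dot f := \partial_t f_t|_{t=0}$ directly from the Sym--Bobenko formula, reduce it by tracking the Iwasawa splitting of $\dot\Phi$ in $\lambda$-Fourier modes to an explicit integral of $\Phi_0\,\dot\eta_{-1}\,\Phi_0^{-1}$, and then read off the Weierstrass data of the resulting map $\psi\colon\Sigma\to T_{\Id}\SU(2)\cong\R^3$. By uniqueness of the finite-dimensional Iwasawa decomposition of $\SL(2,\C)$, the $\lambda$-independence of $\Phi_0$ forces the factors $F_0\in\SU(2)$ and $B_0\in\mathcal B$ of $\Phi_0=F_0 B_0$ to be $\lambda$-independent as well, so the Sym--Bobenko formula gives $f_0=F_0 F_0^{-1}\equiv\Id$. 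Differentiating $\Phi_t=F_t B_t$ at $t=0$ and setting $X=F_0^{-1}\dot F\in\Lambda\su(2)$ and $Y=\dot B\,B_0^{-1}\in\Lambda_+^{\R}\sl(2,\C)$ yields
\[
Z(z,\lambda):=F_0(z)^{-1}\dot\Phi(z,\lambda)B_0(z)^{-1}=X(z,\lambda)+Y(z,\lambda),
\]
a decomposition uniquely determined on Fourier modes by $X_{-k}=Z_{-k}$ and $X_k=-Z_{-k}^{\dagger}$ for $k>0$. Differentiating the Sym--Bobenko formula then gives
\[
\psi(z)=F_0(z)\bigl(X(z,1)-X(z,-1)\bigr)F_0(z)^{-1}=2\,F_0(z)\sum_{k\text{ odd}}X_k(z)\,F_0(z)^{-1}.
\]

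The heart of the argument is that only the modes $k=\pm1$ survive. Setting $W=\Phi_0^{-1}\dot\Phi$ and differentiating $d\Phi_t=\Phi_t\eta_t$ gives $dW=[W,\eta_0]+\dot\eta$, which splits mode-wise as $dW_k=[W_k,\eta_0]+\dot\eta_k$ because $\eta_0=\Phi_0^{-1}d\Phi_0$ is $\lambda$-independent. Since $\lambda\eta_t$ is holomorphic at $\lambda=0$, the source $\dot\eta_k$ vanishes for $k\le-2$, and under the natural blow-up initial condition that $\dot\Phi(z_0,\lambda)$ has no Fourier modes below $\lambda^{-1}$ (cf.\ Theorem~4 of \cite{minoids}) the modes $W_k$ for $k\le-2$ vanish identically. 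For $k=-1$ the identity $d(\dot\Phi_{-1}\Phi_0^{-1})=\Phi_0\dot\eta_{-1}\Phi_0^{-1}$ integrates to
\[
\Omega(z):=\dot\Phi_{-1}(z)\,\Phi_0(z)^{-1}=\int_{z_0}^{z}\Phi_0\,\dot\eta_{-1}\,\Phi_0^{-1}.
\]
Consequently $Z_{-1}=F_0^{-1}\Omega F_0$, and using $F_0\in\SU(2)$ to push the dagger through gives the clean formula $\psi(z)=2\bigl(\Omega(z)-\Omega(z)^{\dagger}\bigr)\in\su(2)$.

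It remains to identify $\psi$ with a minimal immersion carrying the stated Weierstrass data. Differentiating gives $\partial\psi=2\,\Phi_0\,\dot\eta_{-1}\,\Phi_0^{-1}=:2M$, a traceless $\sl(2,\C)$-valued $(1,0)$-form. Nilpotency of $\eta_{t,-1}$ for every $t$ combined with $\eta_{0,-1}=0$ forces $\dot\eta_{-1}$ to be pointwise nilpotent, and conjugation preserves nilpotency, so $M^2=0$; this is precisely the conformality condition $\sum_i\phi_i^2=0$ of the Weierstrass representation in $\R^3$. In the standard DPW gauge $\dot\eta_{-1}=\minimatrix{0 & b\\0 & 0}\,dz$ the entries of $M$ read $M_{11}=-b\,\alpha\gamma$, $M_{12}=b\,\alpha^2$, $M_{21}=-b\,\gamma^2$; matching these against $\phi_1=\tfrac{1}{2}(1-g^2)\omega$, $\phi_2=\tfrac{i}{2}(1+g^2)\omega$, $\phi_3=g\omega$ under the quaternionic identification $T_{\Id}\SU(2)\cong\R^3$ (with vertical axis $x_2$ on the diagonal entries) reads off $g=\ii\alpha/\gamma$ and $\omega=-4\gamma^2 b\,dz=-4\gamma^2\Res_\lambda(\partial_t\eta_{t,12}|_{t=0})$. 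Uniform $C^1$-convergence on compact subsets then follows from the $C^1$-regularity in assumption~(1) and smoothness of Iwasawa on $\Lambda\SL(2,\C)_\rho$.

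The most delicate step is justifying that $W_k\equiv0$ for $k\le-2$: it hinges on a specific normalization of the initial value $\dot\Phi(z_0,\lambda)$ that must be built into the family $\Phi_t$, in exact parallel to the setup of Theorem~4 in \cite{minoids}. The remaining Fourier bookkeeping, the nilpotency argument, and the final matrix identification of the Weierstrass data are then routine.
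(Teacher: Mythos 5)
The paper does not actually prove this theorem: it is quoted as an adaptation of Theorem~4 of \cite{minoids}, so there is no in-paper argument to compare against. Your proposal follows the same route as that source (differentiate the Sym--Bobenko formula, linearize the Iwasawa splitting mode by mode in $\lambda$, and integrate the $\lambda^{-1}$-mode of $\Phi_0^{-1}\dot\Phi$ to get $\Omega=\int\Phi_0\,\dot\eta_{-1}\,\Phi_0^{-1}$), and the computations you outline — the splitting $X_{-k}=Z_{-k}$, $X_k=-Z_{-k}^{\dagger}$, the formula $\psi=2(\Omega-\Omega^{\dagger})+\mathrm{const}$, the identification $\partial\psi=2\Phi_0\dot\eta_{-1}\Phi_0^{-1}$, and the matrix computation giving $g=\ii\alpha/\gamma$ and $\omega=-4\gamma^2 b\,dz$ — are all correct and reproduce the saddle-tower data in the application.

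The one genuine gap is exactly where you flag it, but your proposed fix is the wrong one: you add a normalization hypothesis on $\dot\Phi(z_0,\cdot)$ (no Fourier modes below $\lambda^{-1}$) that is not in the statement, so as written you prove a weaker theorem. No such hypothesis is needed. For $k\le -2$ the mode $W_k=(\Phi_0^{-1}\dot\Phi)_k$ satisfies the \emph{homogeneous} equation $dW_k=[W_k,\eta_0]$ (the source $\dot\eta_k$ vanishes there), hence $W_k(z)=\Phi_0(z)^{-1}C_k\Phi_0(z)$ for a constant matrix $C_k$ determined by the initial value. Conjugating by $B_0$ gives $Z_k(z)=F_0(z)^{-1}C_kF_0(z)$, and since $F_0$ is unitary the contribution of these modes to $\psi(z)=F_0(X(z,1)-X(z,-1))F_0^{-1}$ is $2\sum_{k\ge 3\ \mathrm{odd}}(C_{-k}-C_{-k}^{\dagger})$, a \emph{constant} in $\su(2)$ (even modes drop out of $X(1)-X(-1)$ altogether). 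So arbitrary initial data merely translate $\psi$ in $\R^3$ and do not affect either the minimality or the Weierstrass data; you should close the argument this way rather than restrict the hypotheses. Two smaller points: (i) your reduction to $\dot\eta_{-1}=\minimatrix{0&b\\0&0}dz$ uses that the residue at $\lambda=0$ is strictly upper triangular, which is the normalization $\lambda\eta\in\Lambda_+$ with nilpotent residue used throughout the paper but should be stated, since for a general nilpotent $\dot\eta_{-1}$ the formula for $\omega$ in terms of the $(1,2)$-entry alone would be false; (ii) the final sentence on uniform $C^1$-convergence needs the standard continuous-dependence-on-parameters statement for the ODE $d\Phi_t=\Phi_t\eta_t$ on compacta before invoking smoothness of the Iwasawa map, but this is routine.
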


\noindent 
We aim at finding a family of DPW potentials $\eta_t,$ $t \sim 0,$
with a saddle tower (see \ref{sec:karcher}) as its blow-up limit $\psi$ at $t=0$. The Gauss map $g$ of the saddle tower \eqref{saddletower} suggests  to choose
$$\eta_0= \matrix{0&0\\mz^{m-1}dz&0}.$$
The corresponding solution with initial value $\Phi_0(z=0) = \Id$ is then given by 
$$\Phi_0(z)=\matrix{1&0\\z^m&1}$$ 
which is independent of $\lambda$ and yields the correct $g$ according to Theorem \ref{blowuplimit}.
The meromorphic 1-form $\omega$ of the saddle tower suggests that $\eta_t$ should have simple poles
with residue of order $t$ at the $2m+2$ roots of unity.
\subsection{The potential}$\;$\\
Let $m \in \N^*$ be fixed and define $n=2m+2$.  We consider the ansatz
$$\eta_{t}=\matrix{0&0\\mrz^{m-1}dz&0}+t\sum_{j=0}^{n-1} A_j(\lambda)\frac{dz}{z-p_j},$$
where $A_i \in \left(\Lambda\mathfrak{sl}(2, \C)\right)_\rho$ and the initial condition
$$\Phi_t(z=0)=\Id.$$
Here $r$ and $t$ are real parameters with $r \in (1-\epsilon, 1+\epsilon)$ and $t \in (-\epsilon, \epsilon)$ for some $\epsilon > 0,$  and
$$p_j=e^{2\pi\ii j/n}\quad\mbox{ for $0\leq j\leq 2m+1$}.$$ The parameter $r$ will be later determined by solving the Monodromy Problem. Its initial value at $t=0$ is $r=1$. \\
\subsection{Symmetries}\label{sec:symm}$\;$ \\
Due to the symmetries of the Lawson surfaces $\xi_{m,k}$, we also assume the potentials $\eta_{t}$ to be symmetric.
Let
$$A_0(\lambda)=\matrix{a(\lambda)&\lambda^{-1}b(\lambda)\\
\lambda c(\lambda)&-a(\lambda)}$$
with  functions $a$, $b$, $c$ in $\cal{W}^{\geq 0}_{\R}$.  We assume
$$A_{j+1}(\lambda)=D^{-1}A_j(-\lambda)D\quad\mbox{ for $0\leq j\leq n-2$ with }$$
$$D=\matrix{e^{\ii\pi m/n}&0\\0&e^{-\ii\pi m/n}}.$$
\noindent
Observe that
$$e^{2\ii\pi m/n}=e^{i\pi(n-2)/n}=-e^{-2\ii\pi/n}.$$
Hence writing $A_j=\matrix{a_j&\lambda^{-1}b_j\\\lambda c_j&-a_j}$ with  functions $a_j$, $b_j$, $c_j$ in $\cal{W}^{\geq 0}_{\R}$ we obtain
$$\left\{\begin{array}{l}
a_{j+1}(\lambda)=a_j(-\lambda)\\
b_{j+1}(\lambda)=e^{2\ii\pi/n}b_j(-\lambda)\\
c_{j+1}(\lambda)=e^{-2\ii\pi/n}c_j(-\lambda)
\end{array}\right.$$
and
\begin{equation}\label{explicit-potential}
\begin{split}
\sum_{j=0}^{n-1}\frac{a_j}{z-p_j}&=\frac{nz^m}{2}\left(\frac{a(\lambda)}{z^{m+1}-1}+\frac{a(-\lambda)}{z^{m+1}+1}\right)\\
\sum_{j=0}^{n-1}\frac{b_j}{z-p_j}&=\frac{n}{2}\left(\frac{b(\lambda)}{z^{m+1}-1}-\frac{b(-\lambda)}{z^{m+1}+1}\right)\\
\sum_{j=0}^{n-1}\frac{c_j}{z-p_i}&=\frac{nz^{m-1}}{2}\left(\frac{c(\lambda)}{z^{m+1}-1}+\frac{c(-\lambda)}{z^{m+1}+1}\right).
\end{split}
\end{equation}

\noindent
The symmetries of $\eta_t$ are induced by
$\delta(z)=e^{2\pi\ii/n} z$ and
$\sigma(z)=\overline{z}$. We have
$$\delta^*\eta_t(z,\lambda)=D^{-1}\eta_t(z,-\lambda)D,\qquad
\delta^*\Phi_t(z,\lambda)=D^{-1}\Phi_t(z,-\lambda)D$$
and
$$\sigma^*\overline{\eta_t}=\eta_t,\qquad\sigma^*\overline{\Phi_t}=\Phi_t$$
which, remembering notation \eqref{eq:conjugate}, means
$$\sigma^*\overline{\eta_t(\cdot,\overline{\lambda})}=\eta_t(\cdot,\lambda)$$
and likewise for $\Phi_t$.

\noindent
\subsection{The Monodromy Problem}$\;$\\
Let $\gamma_0,\cdots,\gamma_{n-1}$ be generators of the fundamental group
$\pi_1(\C\setminus\{p_0,\cdots,p_{n-1}\},0)$, with
$\gamma_j$ enclosing the singularity $p_j$. Let
$M_j(t)=\cal{M}(\Phi_t,\gamma_j)$ be the monodromy of $\Phi_t$ along $\gamma_j$.
We want to solve the following problem for all $j$:
\begin{equation}
\label{monodromy-problem2}
\left\{\begin{array}{l}
M_j(t)\in\Lambda SU(2)\\
M_j(t)|_{\lambda=\pm 1}\mbox{ diagonal}\\
M_j(t) \mbox{ has eigenvalues $e^{\pm 2\pi\ii t}$.}
\end{array}\right.
\end{equation}
We will see in Section \ref{sec:con} that provided Problem \eqref{monodromy-problem2} is solved,
taking $t=\frac{1}{2(k+1)}$,
the potential $\eta_t$ pulls back on a $(k+1)$-branched cover to a potential with apparent singularities
solving the Monodromy Problem \eqref{monodromy-problem}. This yields the desired closed minimal surface.\\

\noindent
Regarding symmetries, we have since $\delta(\gamma_j)=\gamma_{j+1}$:
\begin{equation}
\label{eq-recursive-M}
M_{j+1}(t)(\lambda)=D^{-1}M_j(t)(-\lambda)D.
\end{equation}
Because $D$ is unitary and diagonal, it suffices to solve Problem \eqref{monodromy-problem2} for
$j=0$. From now on, we write $M=M_0$.
Since $\sigma(\gamma_0)=\gamma_0^{-1}$, we also have, using the notation \eqref{eq:conjugate}\begin{equation}\label{eq:realMono}
 M(t)=\left(\overline{M(t)}\right)^{-1}.
\end{equation}

\begin{remark} It will turn out that provided Problem \eqref{monodromy-problem2} is solved,
the singularity at $z=\infty$ is apparent, see
Section \ref{sec:reginf}.
\end{remark}
\noindent
At $t=0$ the solution of \begin{equation}\label{eqn-sol}d_\Sigma\Phi_t=\Phi_t\eta_t \quad \text{and}\quad \Phi_t(0)=\Id\end{equation}
is given by 
$$\Phi_0(z)=\matrix{1&0\\r z^m&1}$$
with trivial monodromy, i.e.,
 $$M(0)=\Id.$$
 
\noindent
Hence
$$\wt{M}(t):=\frac{1}{t}\log M(t)$$
extends smoothly at $t=0$, with
$\wt{M}(0)=M'(0)$.
When $t\neq 0$, Problem \eqref{monodromy-problem2} is equivalent to
\begin{equation}
\label{monodromy-problem3}
\left\{\begin{array}{l}
\wt{M}(t)\in\Lambda \su(2)\\
\wt{M}(t)|_{\lambda=\pm 1}\mbox{ diagonal}\\
\wt{M}(t) \mbox{ has eigenvalues $\pm 2\pi\ii$.}
\end{array}\right.
\end{equation}
From the symmetry \eqref{eq:realMono} of $M$ we deduce
$$\wt{M}(t)=-\overline{\wt{M}(t)}.$$
Following \cite{nnoids} we compute 
\begin{eqnarray*}
\wt{M}(0)&=&\int_{\gamma_0}\Phi_0\frac{\partial \eta_t}{\partial t}|_{t=0}\Phi_0^{-1}
=2\pi\ii\Res_{p_0}\left [\matrix{1&0\\rz^m &1}A_0\matrix{1&0\\-rz^m&1}\frac{dz}{z-p_0} \right]\\
&=&2\pi\ii\matrix{a-\lambda^{-1}rb&\lambda^{-1}b\\ 2ra-\lambda^{-1}r^2b+\lambda c&-a+\lambda^{-1}rb}.\end{eqnarray*}
Let $\xx=(r,a,b,c)$ denote the vector of parameters. To highlight the parameters, we denote the potential determined by $\xx$ as
\[\eta_t=\eta_t^\xx.\]
The initial value of $\xx$, denoted by $\xx_0$, is taken to be
\begin{equation}
\label{eq-central}
r=1,\quad a=\lambda,\quad b=\frac{\lambda^2-1}{2}\quad\mbox{ and }\quad
c=-2.\end{equation}
For these values of the parameters, we obtain at $t=0$
\begin{equation}
\label{eq-wtM0}
\wt{M}(0)=\pi\ii\matrix{\lambda+\lambda^{-1}&\lambda-\lambda^{-1}\\\lambda^{-1}-\lambda&-\lambda-\lambda^{-1}}
\end{equation}
so  Problem \eqref{monodromy-problem3} is solved at $t=0$.
\begin{remark}
Assuming $r=1$, one can prove as in \cite{nnoids} that \eqref{eq-central} is the only solution to
Problem \eqref{monodromy-problem3} at $t=0$, up to $(a,b,c)\mapsto (-a,-b,-c)$.
\end{remark}
\subsection{Solving the Monodromy Problem  for $t\neq 0$}\label{sec:implicit} $\;$\\
For a parameter $(t,\xx)$ and the corresponding solution $\Phi_t$ of \eqref{eqn-sol} and its monodromy $M(t)\in \Lambda\SL(2, \C)$ 
we define
$$\cal{F}(t,\xx)=\frac{1}{2\pi\ii}(\wt{M}_{11}(t)+\wt{M}_{11}(t)^*)$$
$$\cal{G}(t,\xx)=\frac{1}{2\pi\ii}(\wt{M}_{21}(t)+\wt{M}_{12}(t)^*)$$
$$\cal{H}_1(t,\xx)=\frac{1}{2\pi\ii}\wt{M}_{12}(t)(\lambda=1)$$
$$\cal{H}_2(t,\xx)=\frac{1}{2\pi\ii}\wt{M}_{12}(t)(\lambda =-1)$$
$$\cal{K}(\xx)=-\det(A_0)|(\lambda=0)= (a^0)^2+b^0c^0.$$

\begin{proposition}
Problem \eqref{monodromy-problem3} is equivalent to
\begin{equation}\label{IFTeqn}
\left\{\begin{array}{l}
\cal{F}(t,\xx)=0\\
\cal{G}(t,\xx)=0\\
\cal{H}_1(t,\xx)=0\\
\cal{H}_2(t,\xx)=0\\
\cal{K}(\xx)=1\,.\end{array}\right.\end{equation}
\end{proposition}
\begin{proof} The first four equations of \eqref{IFTeqn} are clearly equivalent to the first two equations of
\eqref{monodromy-problem3}.
Regarding the last one, by standard fuchsian system theorem, $\wt{M}(t)$ has the same eigenvalues as
$2\pi\ii A_0$, so the last equation of \eqref{monodromy-problem3} is equivalent to
$\det(A_0)=-1$. Provided $\wt{M}(t)\in\Lambda\su(2)$, its eigenvalues are imaginary for $\lambda$ on the unit circle, so
the eigenvalues of $A_0$ are real. Now $\det(A_0)$ is a homorphic function of $\lambda$ in the unit disk
which is real on the unit circle, so it must be constant.
\end{proof}

\noindent
From the symmetries we have
$$\overline{\cal{F}}=-\frac{1}{2\pi\ii}(\overline{\wt{M}_{11}}+\overline{\wt{M}_{11}^*})
=-\frac{1}{2\pi\ii}(-\wt{M}_{11}-\wt{M}_{11}^*)=\cal{F}.$$
Hence $\cal{F}(t,\xx)\in\cal{W}_{\R}$. In the same way, $\cal{G}(t,\xx)\in\cal{W}_{\R}$.
Further, since $\cal{F}^*=-\cal{F}$ by definition, we obtain $\cal{F}^+(\lambda)=-\cal{F}^-(\tfrac{1}{\lambda})$, and therefore we do not have to
solve $\cal{F}^-=0$ separately. Moreover,
$$\cal{F}\in\cal{W}_{\R}\Rightarrow \cal{F}^0\in\R$$
$$\overline{\cal{F}^0}=(\cal{F}^*)^{0}=-\cal{F}^0\Rightarrow \cal{F}^0\in\ii\R.$$
Hence $\cal{F}^0(t,\xx)=0$ automatically holds by symmetry.\\

\noindent
Differentiating $\cal F$ and $\cal G$ with respect to $\xx$ at $(0,\xx_0)$ given by \eqref{eq-central} gives
\begin{equation}\label{dFdG}
\begin{split}
d\cal{F}&=da-\lambda^{-1}db-da^*+\lambda db^*+(\lambda^{-1}-\lambda) dr\\
d\cal{G}&=2da-\lambda^{-1}db+\lambda dc-\lambda db^*+(\lambda+\lambda^{-1})dr.
\end{split}
\end{equation}
Write
$$b(\lambda)=b^0+\lambda \wt{b}(\lambda)\quad\mbox{ with } \wt{b}\in\cal{W}^{\geq 0}_{\R}.$$
Then (recalling that $b^0\in\R$)

\begin{equation}\label{dFdG+}
\begin{split}
d\cal{F}^+&=da^+ -d\wt{b}^+ +\lambda db^0 -\lambda dr\\
d\cal{G}^+&=2da^+ -d\wt{b}^++\lambda dc-\lambda db^0+\lambda dr\\
d\cal{G}^-&=-\lambda^{-1}db^0-(d\wt{b}^+)^*+\lambda^{-1}dr\\
(d\cal{G}^-)^*&=-\lambda db^0-d\wt{b}^++\lambda dr.
\end{split}
\end{equation}
The Jacobian of $(\cal{F}^+,\cal{G}^+,(\cal{G}^-)^*)$ with respect to
$(a^+,\wt{b}^+,\lambda c)$ is
$$\left(\begin{array}{ccc}1&-1&0\\2&-1&1\\0&-1&0\end{array}\right)$$
so this operator is an automorphism of $(\cal{W}^{>0}_{\R})^3$.
(Both variables and functions are in $\cal{W}^{>0}_{\R}$ by definition and the previous symmetry arguments.)
Therefore, applying the Implicit Function Theorem, the equations $\cal{F}^+=0$,
$\cal{G}^+=0$ and $\cal{G}^-=0$ uniquely determine the parameters
$a^+$, $\wt{b}^+$ and $c$ as functions of $t$ and the remaining parameters
$r,a^0,b^0,\wt{b}^0$.\\

\noindent
It remains to solve four real equations $\cal{G}^0=0$, $\cal{H}_1=0$, $\cal{H}_2=0$
and $\cal{K}=0$
with the remaining  four parameters
$(r,a^0,b^0,\wt{b}^0)\in\R^4$. The derivatives of the functions $a^+$, $\wt{b}^+$
and $c$ with respect to these parameters satisfy
\begin{equation*}
\begin{split}
d\wt{b}^+&=-\lambda db^0+\lambda dr,\\
da^+&=-2\lambda db^0+2\lambda dr,\\ 
dc& =4db^0-4dr,
\end{split}
\end{equation*}
which is obtained by inserting  
$d\cal{F}^+=0$, $d\cal{G}^+=0$
and $d\cal{G}^-=0$ into \eqref{dFdG+}.
\noindent
With these we obtain
\begin{equation*}
\begin{split}
d\cal{G}^0&=2da^0-2d\wt{b}^0\\
d\cal{H}_1&=db(1)=db^0+d\wt{b}^0+d\wt{b}^+(1)=db^0+d\wt{b}^0-db^0+dr=d\wt{b}^0+dr\\
d\cal{H}_2&=-db(-1)=-db^0+d\wt{b}^0+d\wt{b}^+(-1)=-db^0+d\wt{b}^0+db^0-dr=d\wt{b}^0-dr\\
d\cal{K}&=-\frac{1}{2}dc^0-2db^0=4db^0-4dr.
\end{split}
\end{equation*}
The Jacobian of $(\cal{G}^0,\cal{H}_1,\cal{H}_2,\cal{K})$ with respect to $(a^0,b^0,\wt{b}^0,r)$
is an automorphism of $\R^4.$ Therefore,   these equations uniquely determine the remaining parameters
$(a^0,b^0,\wt{b}^0,r)$ as smooth functions for $t\sim 0$ by Implicit Function Theorem.
So we have proven the following proposition:
\begin{proposition}\label{prop:IFT}
For $t>0$ small, there exists a unique $\xx(t)$ in a neighbourhood of $\xx_0$ such that \eqref{IFTeqn} holds. In other words, the DPW potential $\eta^{\xx(t)}_t$ solves the Monodromy Problem \eqref{monodromy-problem2}.
\end{proposition}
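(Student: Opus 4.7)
The plan is to apply the Implicit Function Theorem (IFT) in the Banach-space setting of the loop spaces $\cal{W}_\rho$, leveraging the derivative computations already assembled in \eqref{dFdG}--\eqref{dFdG+}. Smoothness of the relevant functionals is the first ingredient: the potential $\eta_t^\xx$ depends smoothly on $(t,\xx)$ with $\xx$ ranging in the Banach space $\R \times (\cal{W}^{\geq 0}_\R)^3$, the solution of the ODE \eqref{eqn-sol} depends smoothly into $\Lambda\SL(2,\C)_\rho$, and since $M(0) = \Id$ the matrix logarithm furnishes a smooth extension of $\wt M(t) = \frac{1}{t}\log M(t)$ to $t = 0$. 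Hence $\cal{F}, \cal{G}, \cal{H}_1, \cal{H}_2, \cal{K}$ are smooth maps with values in the appropriate real Banach spaces.

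First I would exploit the $^*$-symmetry and the real structure \eqref{eq:realMono} to cut down the redundant equations. Because $\cal{F}^* = -\cal{F}$ and $\cal{F} \in \cal{W}_\R$, the identity $\cal{F}^- = -(\cal{F}^+)^*$ holds automatically and $\cal{F}^0 \in \R \cap \ii\R = \{0\}$, so only $\cal{F}^+ = 0$ survives as a constraint. The analogous symmetry is absent for $\cal{G}$, so $\cal{G}^+$, $(\cal{G}^-)^*$, and $\cal{G}^0$ must be retained as three independent pieces. Together with $\cal{H}_1, \cal{H}_2 \in \R$ and $\cal{K} = 1$, this is the reduced system to solve.

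Then I would apply the IFT in two stages. In stage one (infinite-dimensional), the derivative formulas \eqref{dFdG+} show that at $(0,\xx_0)$ the linearization of $(\cal{F}^+, \cal{G}^+, (\cal{G}^-)^*)$ with respect to $(a^+, \wt b^+, \lambda c) \in (\cal{W}^{>0}_\R)^3$ is the constant $3 \times 3$ matrix exhibited in the text, with determinant $-1$. Since each entry acts by scalar multiplication on loops, it induces a bounded linear automorphism of $(\cal{W}^{>0}_\R)^3$, and the Banach-space IFT produces smooth solutions $a^+, \wt b^+, c$ as functions of $(t,r,a^0,b^0,\wt b^0)$ near $\xx_0$. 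In stage two (finite-dimensional), I substitute these functions into $(\cal{G}^0, \cal{H}_1, \cal{H}_2, \cal{K})$ and use the derivative formulas derived just afterwards; the $4 \times 4$ real Jacobian with respect to $(a^0, b^0, \wt b^0, r)$ is then checked by a direct computation to have nonzero determinant, whence the classical IFT yields the unique $\xx(t)$ in a neighborhood of $\xx_0$ for all sufficiently small $t$.

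The main delicate point is the infinite-dimensional stage: one must verify that $\cal{F}^+, \cal{G}^+, (\cal{G}^-)^*$ genuinely land in $\cal{W}^{>0}_\R$ so that the constant $3 \times 3$ Jacobian really defines a bounded invertible operator between the correct Banach subspaces, and that $\wt M$ is smooth in the $\cal{W}_\rho$-norm at $t = 0$. Both are technical but follow from the analyticity of the matrix logarithm near $\Id$ together with the Wiener-algebra structure on $\cal{W}_\rho$ recalled in Section \ref{section-notations}, under which pointwise multiplication and the splitting into positive, zero, and negative Fourier parts are continuous operations.
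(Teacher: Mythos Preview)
Your proposal is correct and follows essentially the same route as the paper: the symmetry reductions on $\cal{F}$ and $\cal{G}$, the two-stage application of the Implicit Function Theorem (first the infinite-dimensional piece in $(a^+,\wt b^+,\lambda c)$ using the constant $3\times 3$ Jacobian, then the finite-dimensional piece in $(a^0,b^0,\wt b^0,r)$), and the verification that both Jacobians are invertible are exactly the steps the paper carries out in the text preceding the proposition. Your added remarks on smoothness of $\wt M$ at $t=0$ and on the Banach-algebra structure of $\cal W_\rho$ make explicit what the paper leaves implicit.
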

\noindent
We shall need the value of the monodromies at $\lambda=\pm 1$:
\begin{proposition}\label{prop-monodromy-at-1}
The monodromy of the solution of \eqref{eqn-sol} for  $\eta_t^\xx$ determined by Proposition \ref{prop:IFT} satisfies
$$M_j(t)(\lambda=1)=\matrix{e^{2\pi\ii t}&0\\0&e^{-2\pi\ii t}}^{(-1)^j}\quad\mbox{ and }\quad
M_j(t)(\lambda=-1)=\matrix{e^{-2\pi\ii t}&0\\0&e^{2\pi\ii t}}^{(-1)^j}$$
for $0\leq j\leq n-1$. Moreover, $\det(A_j(t))(\lambda)=-1$ for all $\lambda\in\S^1$.
\end{proposition}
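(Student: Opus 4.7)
My plan is to reduce the claim via the symmetry \eqref{eq-recursive-M} to computing $M_0(t)(\pm 1)$, and then to combine the equations of Proposition \ref{prop:IFT} with the local Fuchsian structure of $\eta_t^\xx$ at $p_0$.

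First I would note that $D$ is diagonal, so conjugation by $D$ preserves diagonal matrices. Thus, once $M_0(t)(\pm 1)$ are known to be diagonal, iterating \eqref{eq-recursive-M} at $\lambda = \pm 1$ simply interchanges $M_0(t)(+1)$ and $M_0(t)(-1)$, which produces the $(-1)^j$ pattern in the statement provided $M_0(t)(-1) = M_0(t)(+1)^{-1}$. For the diagonality itself, the equations $\cal F = 0$ and $\cal G = 0$ together with the reality property $\overline{\wt M(t)} = -\wt M(t)$ force $\wt M(t)(\lambda) \in \su(2)$ for $\lambda \in S^1$; this is the standard Lie-algebraic form of unitarity of the monodromy. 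At $\lambda = 1$ the equation $\cal H_1(t,\xx) = 0$ kills $\wt M_{12}(t)(1)$, and skew-Hermitianity then kills $\wt M_{21}(t)(1)$, so $\wt M(t)(1)$ is purely imaginary and diagonal. Hence $M_0(t)(1) = \exp(t\,\wt M(t)(1))$ is diagonal and unitary, and the same argument with $\cal H_2 = 0$ handles $\lambda = -1$.

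To identify the diagonal entries I would apply Fuchsian residue theory at $p_0$. Near $p_0$ the potential $\eta_t^\xx$ has a simple pole with residue $tA_0(\lambda)$, so as long as the eigenvalues of $tA_0(\lambda)$ do not differ by a nonzero integer --- automatic for $t$ small by the initial data \eqref{eq-central} --- the monodromy $M_0(t)(\lambda)$ is conjugate to $\exp(2\pi\ii t A_0(\lambda))$. Since $A_0$ is trace-free, its eigenvalues are $\pm\sqrt{-\det A_0(\lambda)}$, so the eigenvalues of $M_0(t)(\lambda)$ are $\exp\!\bigl(\pm 2\pi\ii t\sqrt{-\det A_0(\lambda)}\bigr)$. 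Once $\det A_0(\lambda) \equiv -1$ on $S^1$ is established (see below), these eigenvalues become exactly $e^{\pm 2\pi\ii t}$. At $\lambda = \pm 1$ the monodromy is diagonal, so the eigenvalues are the diagonal entries; the correct sign assignment is pinned down by continuity in $t$ from $\wt M(0)(1) = 2\pi\ii\,\mathrm{diag}(1,-1)$ and $\wt M(0)(-1) = -2\pi\ii\,\mathrm{diag}(1,-1)$ read off from \eqref{eq-wtM0}, yielding $M_0(t)(1) = \mathrm{diag}(e^{2\pi\ii t}, e^{-2\pi\ii t})$ and $M_0(t)(-1) = M_0(t)(1)^{-1}$. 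The determinant claim for arbitrary $j$ then follows from $\det A_{j+1}(\lambda) = \det A_j(-\lambda)$, immediate from the symmetry $A_{j+1}(\lambda) = D^{-1}A_j(-\lambda)D$.

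The main obstacle is establishing $\det A_0(\lambda) \equiv -1$ on $S^1$. A priori $-\det A_0 = a^2 + bc \in \cal W^{\geq 0}_\R$ has free $\lambda$-dependence, and the equation $\cal K(\xx) = 1$ only pins down its constant term in $\lambda$. What is required is the standard Fuchsian fact alluded to after \eqref{IFTeqn}: once the Monodromy Problem is solved --- in particular once the singularity at $\infty$ is apparent (Section \ref{sec:reginf}) so that the product $\prod_j M_j$ is central, and all $M_j(\lambda)$ are unitary on $S^1$ --- the $\lambda$-dependent conjugacy classes of the residues are rigidly determined and $\det A_0$ must be a constant function of $\lambda$. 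Combined with $\cal K(\xx) = 1$ this yields $\det A_0 \equiv -1$. This is the step I would expand most carefully in a full write-up, most likely by examining the explicit monodromy around $\infty$ in conjunction with the product relation for the $M_j$.
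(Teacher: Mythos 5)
Your reduction to $M_0$ via \eqref{eq-recursive-M}, the diagonality of $\wt M(t)(\pm1)$ from $\cal{H}_1=\cal{H}_2=0$ plus skew-Hermitianity, the identification of the eigenvalues of $M_0(t)(\lambda)$ with $\exp(\pm2\pi\ii t\sqrt{-\det A_0(\lambda)})$, and the continuity-in-$t$ argument pinning the signs all match the paper's proof. The one step you explicitly leave open --- that $\det A_0(\lambda)\equiv -1$ rather than merely having constant term $-1$ --- is a genuine gap, and the route you sketch for closing it does not work in this logical order. You propose to invoke the apparent singularity at $\infty$ and centrality of $\prod_j M_j$; but in the paper Proposition \ref{apparent} is proved \emph{after} and \emph{using} Proposition \ref{prop-monodromy-at-1} (its proof begins by citing the values $M_\infty(t)(\pm1)=\Id$ from this very proposition), so that argument would be circular. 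Moreover, the assertion that unitarity of the monodromy ``rigidly determines the $\lambda$-dependent conjugacy classes of the residues'' is not a standard fact in the generality you state it.

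The actual argument is elementary and does not need the point $\infty$ at all: since $\wt M(t)\in\Lambda\su(2)$ on $S^1$ and $\wt M(t)$ has the same eigenvalues as $2\pi\ii A_0(t)$, the eigenvalues of $A_0(t)(\lambda)$ are real for $\lambda\in S^1$; as $A_0$ is trace-free, $\det A_0(\lambda)=-a^2-bc$ is then real-valued on $S^1$. But $a,b,c\in\cal{W}^{\geq 0}_\R$, so $\det A_0$ extends holomorphically to the unit disc; a holomorphic function on the disc that is real on the boundary circle has harmonic imaginary part vanishing on $S^1$, hence is constant. The normalisation $\cal{K}(\xx)=1$ then forces $\det A_0\equiv-1$. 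You had all the ingredients on the table (unitarity, realness of eigenvalues, $a,b,c\in\cal{W}^{\geq 0}$) but did not combine the boundary realness with holomorphy in $\D$, which is the whole point of the step.
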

\noindent
\begin{proof}
By equation \eqref{monodromy-problem3}, $\wt{M}(t)(\pm 1)$ is diagonal with eigenvalues $\pm 2\pi\ii$. From Equation \eqref{eq-wtM0} at $t=0$, we obtain by continuity
$$\wt{M}(t)(\lambda=1)=2\pi\ii\matrix{1&0\\0&-1} \quad\mbox{ and }\quad
\wt{M}(t)(\lambda=-1)=2\pi\ii\matrix{-1&0\\0&1}.$$
The proposition follows from $M(t)=\exp(t\wt{M}(t))$ and Equation \eqref{eq-recursive-M}.\end{proof}

\subsection{Regularity at $z=\infty$}\label{sec:reginf}
The following proposition guarantees that the surfaces constructed by $\eta^\xx_t$ in Proposition \ref{prop:IFT} extends smoothly to $z=\infty$, see also Subsection \ref{subsubapp}.
\begin{proposition}\label{apparent}
For $t \sim 0$ let  $\eta_t=\eta^{\xx(t)}_t$ be the unique solution of the Monodromy Problem with parameters $\xx(t).$ Then $z=\infty$ is an apparent singularity of $\eta_t$.
\end{proposition}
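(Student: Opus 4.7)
The plan is to construct an explicit positive gauge $\tilde B_t$, defined on a punctured neighbourhood of $z = \infty$ and taking values in $\Lambda_+\mathrm{SL}(2,\C)$, that removes the singularity of $\eta_t$ there, obtained by perturbing an obvious gauge at $t=0$.

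At $t=0$, the solution $\Phi_0(z)=\minimatrix{1&0\\z^m&1}$ of the ODE \eqref{eqn-sol} is polynomial in $z$ and independent of $\lambda$, so the candidate
\[
\tilde B_0(z) := \Phi_0(z)^{-1} = \minimatrix{1 & 0 \\ -z^m & 1}
\]
lies trivially in $\Lambda_+\mathrm{SL}(2,\C)$. Differentiating $\Phi_0\tilde B_0 = \mathrm{Id}$ yields $\eta_0.\tilde B_0 = 0$, so $z=\infty$ is apparent at $t=0$. For $t$ small I would look for the desired gauge in the form $\tilde B_t = \tilde B_0\cdot C_t$, with $C_t$ polynomial in $1/z$ of bounded degree, having coefficients in $\Lambda_+\mathrm{SL}(2,\C)_\rho$ and satisfying $C_0\equiv\mathrm{Id}$. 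By the gauge rule \eqref{def:gauge}, $\eta_t.\tilde B_t = (\eta_t.\tilde B_0).C_t$; since $\eta_0.\tilde B_0 = 0$ and $r(0) = 1$, the $1$-form $\eta_t.\tilde B_0$ vanishes at $t = 0$.

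Changing to the local coordinate $w = 1/z$ and using the symmetric formulas \eqref{explicit-potential}, the principal part of $\eta_t.\tilde B_0$ at $w = 0$ has only finitely many Laurent coefficients, depending smoothly on $t$, on $r(t)-1$ and on the loops $a(\lambda), b(\lambda), c(\lambda)$. A direct computation shows that the $(2,1)$-entry carries a pole of order $m+1$ whose leading coefficient is a simple multiple of $m(1-r(t))$ plus an $O(t)$ loop built from the even and odd parts of $a$ and $b$; the diagonal has a simple pole in $dw/w$; and the $(1,2)$-entry is in fact regular at $\infty$ (of order $w^{m-1}$). Demanding that $\eta_t.\tilde B_t$ extend holomorphically at $\infty$ thus becomes a finite system of linear equations on the Laurent coefficients of $C_t$, valued in $\cal{W}_\rho^{\geq 0}$. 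The main obstacle is to check that the linearisation of this system at $(t, C_t) = (0, \mathrm{Id})$ is an isomorphism; once that is established, the Implicit Function Theorem produces a unique $C_t$ (and hence $\tilde B_t$) depending smoothly on $t$ near zero. The asymmetry between the $(2,1)$- and $(1,2)$-singular behaviours naturally confines the perturbation to the lower-triangular side, making the Jacobian block-triangular in the Laurent-degree filtration at $\infty$ with diagonal blocks that are scalar multiplication operators on $\cal{W}_\rho^{\geq 0}$ (in direct analogy with the block structure of Section \ref{sec:implicit}), and hence invertible.
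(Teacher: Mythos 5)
Your strategy has a genuine gap: you never use the hypothesis that $\eta_t$ solves the Monodromy Problem, and without that hypothesis the statement is false. A meromorphic gauge with values in $\Lambda_+\SL(2,\C)$ can only remove a singularity whose local monodromy is trivial and whose local formal invariants (eigenvalues of the residue modulo $\Z$, irregular type of the higher-order pole) are those of a regular point. These invariants are properties of the connection $d+\eta_t$ at $\infty$, not of the gauge; for a generic choice of the coefficients $(r,a,b,c)$ near $\xx_0$ the local monodromy $M_\infty(t)=\prod_j M_j(t)$ is nontrivial and no gauge whatsoever makes the singularity apparent. This is precisely where your claimed invertibility of the linearised system breaks down: the linear map sending the Laurent coefficients of $C_t$ to the change in the singular Laurent coefficients of $(\eta_t.\tilde B_0).C_t$ has a nonzero cokernel. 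Already in the model case of a single matrix-valued simple pole, a gauge conjugates the residue and can shift its diagonal by integers, but cannot kill a residue with non-integer eigenvalues. Your own observation that the diagonal of $\eta_t.\tilde B_0$ carries a simple pole $\alpha(t,\lambda)\,dw/w$ is exactly the obstruction: removing it by a diagonal gauge would require $w^{\alpha}$, which is not meromorphic unless $\alpha\in\Z$, while removing it through the off-diagonal entries feeds back into the order-$(m+1)$ pole of the $(2,1)$-entry.

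The paper's proof therefore proceeds differently. An explicit gauge $G_s$ (upper triangular, with $z^{\pm m}$ on the diagonal and a $\lambda$-dependent constant $s=\frac{t}{m}\sum_i a_i$ chosen so that the diagonal residue cancels against the order-$(m+1)$ pole of the lower-left entry) reduces $\eta_t$ to a normal form in which only the $(1,2)$-entry remains singular, with principal part $\lambda^{-1}B(\lambda)\,dw/w^{m+1}$ and $B(0)=0$. That remaining coefficient cannot be gauged away; instead Theorem \ref{thm:appendix} is invoked, which is an implicit-function-theorem argument on the coefficients of the \emph{potential} (not of a gauge) showing that the unitarity of $M_\infty(t)$ together with $M_\infty(t)(\pm1)=\Id$ from Proposition \ref{prop-monodromy-at-1} forces the singular coefficients to vanish for $t$ small. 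Any correct proof must input the monodromy data at this stage; a purely local, gauge-theoretic argument cannot succeed.
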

\noindent
\begin{proof} Let $\gamma_{\infty}=\prod_{i=0}^{n-1}\gamma_i$ and $M_{\infty}(t)$ be the monodromy of $\Phi_t$ corresponding
to $\gamma_{\infty}$.
By Proposition \ref{prop-monodromy-at-1}, $\Phi_t$ solves the following Monodromy Problem:

$$\left\{\begin{array}{l}
M_{\infty}(t)\in\Lambda SU(2)\\
M_{\infty}(t)(\pm 1)=\Id\,.
\end{array}\right. $$
Consider the gauge

$$G_0(z)=\matrix{z^{-m}&\frac{-1}{r}\\0&z^m}.$$
Then

$$\eta_0. G_0=\matrix{0&0\\m z^{-m-1}dz&0}$$
which is holomorphic at $\infty$ since $m\geq 1$.
We introduce a parameter $s\in\cal{W}^{\geq 0}_\rho$ and define

$$G_s(z,\lambda)=\matrix{z^{-m}&\frac{1}{r}(s(\lambda)-1)\\0&z^m}.$$
Let $\wh{\eta}_t=\eta_t. G_s$.
A computation reveals that

$$\wh{\eta}_{t;21}=\frac{mr\,dz}{z^{m+1}}+t\sum_{i=0}^{n-1}
\frac{\lambda c_i\,dz}{z^{2m}(z-p_i)}$$
is holomorphic at $z=\infty$ and

$$\wh{\eta}_{t;11}=\frac{-ms\,dz}{z}+t\sum_{i=0}^{n-1}\left[ \frac{a_i\,dz}{z-p_i}+\frac{\lambda c_i(1-s)dz}{rz^m(z-p_i)}\right],$$
which is holomorphic at $z = \infty$ by choosing

$$s=\frac{t}{m}\sum_{i=0}^{n-1} a_i.$$
Finally,

$$\wh{\eta}_{t;12}=\frac{s}{r}(1-s)m z^{m-1}dz+t\sum_{i=0}^{n-1}\left[\frac{2a_i(s-1)z^m\,dz}{r(z-p_i)}+
\frac{b_i z^{2m}\,dz}{\lambda(z-p_i)}-\frac{\lambda c_i(s-1)^2dz}{r^2(z-p_i)}\right].$$
We use $w=\frac{1}{z}$ as a local coordinate. From Equation \eqref{explicit-potential}, we obtain

$$\wh{\eta}_{t;12}=\lambda^{-1}B(\lambda)\frac{dw}{w^{m+1}}+O(w^0\,dw)$$
with

$$B(\lambda)=-\lambda r^{-1}s(1-s)m-\lambda r^{-1}t(s-1)n(a(\lambda)+a(-\lambda))
-t(b(\lambda)-b(-\lambda)).$$
In particular, $B(0)=0$.
By Theorem \ref{thm:appendix} in the Appendix \ref{app}, $\wh{\eta}_t$ is holomorphic at $z=\infty$.
(Note that $\wh{\eta}_{0;21}=-m w^{m-1}dw$ so to apply Theorem \ref{thm:appendix}, we make the change of coordinate $v=kw$ with $k^m=-1$.)\end{proof}

\begin{remark}
\label{remark-imm-infty}
The coefficient $\wh{\beta}$ of $\lambda^{-1}$ in $\wh{\eta}_{t;12}$ is obtained from the
coefficient of $\lambda^{-1}$ in $\eta_{t;12}$ multiplied by $(G_{s;22})^2$.
Equation \eqref{explicit-potential} then gives
$$\wh{\beta}=\frac{ n b^0 z^{n-2}dz}{z^n-1}$$
which does not vanish at $\infty$. Hence the immersion obtained from the DPW method
will be unbranched at $\infty$.
\end{remark}
\section{Derivatives of the parameters}\label{sec:der}
\noindent
In this section, we consider the unique family $\eta^{\xx(t)}_t$ from Proposition \ref{prop:IFT}   solving the Monodromy Problem.  Let $\xx(t)=( r(t), a(t), b(t), c(t))$. We want to compute the time derivatives of the parameters. 
\subsection{Time parity of the potential}$\;$\\
The following proposition facilitates the computations of the derivatives of the parameters.
\begin{proposition}
\label{prop-time-parity}
Assume that $\eta_t=\eta_t^{\xx(t)}$ is the unique family from Proposition \ref{prop:IFT}.
Then

$$\eta_{-t}(z,-\lambda)=\eta_t(z,\lambda).$$
This is equivalent to

$$\left\{\begin{array}{l}
a(-t)(-\lambda)=-a(t)(\lambda)\\
b(-t)(-\lambda)=b(t)(\lambda)\\
c(-t)(-\lambda)=c(t)(\lambda)\\
r(-t)=r(t)\,.\end{array}\right.$$
In particular, $\frac{d^k}{dt^k}a(t=0)$ is an odd function of $\lambda$ and vanishes at $\lambda=0$, for all even $k$.
\end{proposition}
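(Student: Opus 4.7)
The plan is to invoke the uniqueness statement of Proposition \ref{prop:IFT}. I would introduce the candidate potential $\tilde\eta_t(z,\lambda):=\eta_{-t}(z,-\lambda)$ and check three things: (a) $\tilde\eta_t$ fits the ansatz of Section \ref{sec:symm} with some parameter $\tilde\xx(t)$, (b) $\tilde\eta_t$ again solves the Monodromy Problem \eqref{IFTeqn}, and (c) $\tilde\xx(0)=\xx_0$. Local uniqueness then forces $\tilde\xx(t)=\xx(t)$ for small $t$, which is exactly the four claimed identities. The final parity statement for $\tfrac{d^k}{dt^k}a(0)$ will follow by Taylor-expanding the identity $a(-t)(-\lambda)=-a(t)(\lambda)$ at $t=0$.

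Reading off the ansatz gives directly $\tilde r(t)=r(-t)$ and $\tilde A_j(t,\lambda)=-A_j(-t,-\lambda)$, so the entries of $\tilde A_0$ yield the three identities for $\tilde a,\tilde b,\tilde c$. A one-line substitution shows that the recursion $\tilde A_{j+1}(\lambda)=D^{-1}\tilde A_j(-\lambda)D$ is preserved, as are the reality conditions and membership of the entries in $\cal{W}^{\geq 0}_{\R}$. Setting $\tilde\Phi_t(z,\lambda):=\Phi_{-t}(z,-\lambda)$ and differentiating in $z$ shows $d\tilde\Phi_t=\tilde\Phi_t\tilde\eta_t$ with $\tilde\Phi_t(0)=\Id$, so this is the frame attached to $\tilde\eta_t$, and its monodromy along $\gamma_0$ is $\tilde M(t)(\lambda)=M(-t)(-\lambda)$. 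Unitarity on $|\lambda|=1$ is preserved because $\lambda\mapsto-\lambda$ leaves $S^1$ invariant; the diagonal eigenvalues $e^{\pm 2\pi\ii t}$ at $\lambda=\pm1$ required by $\cal H_1=\cal H_2=0$ match those in Proposition \ref{prop-monodromy-at-1} after the simultaneous sign flip of $t$ and $\lambda$; and $\det(\tilde A_0)(\lambda)=\det A_0(-t,-\lambda)=-1$, so the normalization $\cal K(\tilde\xx)=1$ holds. Together these verify $\tilde\xx(t)$ satisfies \eqref{IFTeqn}.

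At $t=0$ the initial data \eqref{eq-central} are invariant under $(a,b,c)(\lambda)\mapsto(-a,b,c)(-\lambda)$, so $\tilde\xx(0)=\xx_0$; by continuity of $\tilde\xx$ in $t$, this family stays in the neighborhood where Proposition \ref{prop:IFT} applies, and uniqueness then yields $\tilde\xx(t)=\xx(t)$. For the parity claim, differentiating $a(-t)(-\lambda)=-a(t)(\lambda)$ $k$ times in $t$ at $t=0$ gives $(-1)^k\tfrac{d^k a}{dt^k}(0)(-\lambda)=-\tfrac{d^k a}{dt^k}(0)(\lambda)$, which for even $k$ says $\tfrac{d^k a}{dt^k}(0)$ is odd in $\lambda$ and therefore vanishes at $\lambda=0$. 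I expect no analytic obstacle beyond the bookkeeping in matching the diagonal monodromy at $\lambda=\pm1$ and checking that each scalar equation in \eqref{IFTeqn} is invariant under $(t,\lambda)\mapsto(-t,-\lambda)$; the proof is essentially a symmetry-plus-uniqueness argument.
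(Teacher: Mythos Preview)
Your proposal is correct and follows essentially the same route as the paper: define the symmetry-transformed potential, identify its parameter vector, observe that the associated frame $\Phi_{-t}(z,-\lambda)$ again solves \eqref{IFTeqn}, check the initial value agrees with $\xx_0$, and conclude by the uniqueness in Proposition~\ref{prop:IFT}. You in fact spell out more than the paper does, verifying each of $\cal F,\cal G,\cal H_1,\cal H_2,\cal K$ explicitly (note that under $\lambda\mapsto-\lambda$ the roles of $\cal H_1$ and $\cal H_2$ swap, which is harmless since both vanish), whereas the paper simply asserts that $\wh{\Phi}_t$ solves \eqref{IFTeqn} and invokes uniqueness.
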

\noindent
\begin{proof} Let

$$\wh{\eta}_t(z,\lambda)=\eta_{-t}(z,-\lambda).$$
Then

$$\wh{\eta}_t(z,\lambda)=\matrix{0&0\\m\wh{r}(t)z^{m-1}dz&0}+t\sum_{j=0}^{n-1}
\wh{A}_j(t)(\lambda)\frac{dz}{z-p_j}$$
with

$$\wh{r}(t)=r(-t)\quad\mbox{ and } \quad
\wh{A}_j(t)(\lambda)=-A_j(-t)(-\lambda).$$
Hence

$$\wh{A}_0(t)=\matrix{\wh{a}(t)&\lambda^{-1}\wh{b}(t)\\\lambda\wh{c}(t)&-\wh{a}(t)}
\quad\mbox{ with }\quad \left\{\begin{array}{l}
\wh{a}(t)(\lambda)=-a(-t)(-\lambda)\\
\wh{b}(t)(\lambda)=b(-t)(-\lambda)\\
\wh{c}(t)(\lambda)=c(-t)(-\lambda)\,.\end{array}\right.$$

\noindent
Let $\wh{\xx}(t)=(\wh{a}(t),\wh{b}(t),\wh{c}(t),\wh{r}(t)).$
Observe that at $t=0$, $\wh{\xx}(0)=\xx(0)$.
Let $\wh{\Phi}_t$ be the solution of $d\wh{\Phi}_t=\wh{\Phi}_t\wh{\eta}_t$ with initial condition
$\wh{\Phi}_t(0)=\Id$. Then $\wh{\Phi}_t(z,\lambda)=\Phi_{-t}(z,-\lambda)$.
Hence $\wh{\Phi}_t$ solves Equation \eqref{IFTeqn}.
By uniqueness in the Implicit Function Theorem, $\wh{\xx}(t)=\xx(t)$ for all $t$ in a neighbourhood of $0$.\end{proof}
\subsection{First order derivatives}
\begin{proposition}
\label{prop-derivatives}
The $t$-derivatives of the parameters $\xx(t)$ solving \eqref{IFTeqn} at $t=0$ are given by
\begin{equation}
\label{eq-derivatives}
a'(0)=(1-\lambda^2)\kappa_m,\quad
b'(0)=(\lambda-\lambda^3)\kappa_m,\quad
c'(0)=0,\quad
r'(0)=0.\end{equation}
where
\begin{equation}\label{def:kappa}\kappa_m=\frac{n}{2}\int_{0}^1\frac{(1-x^m)^2}{1-x^n}dx,\quad n=2m+2.\end{equation}
\end{proposition}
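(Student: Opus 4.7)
The plan is to determine $\xx'(0)$ by linearizing the monodromy system at $(t,\xx)=(0,\xx_0)$ and combining the structural constraints from the symmetries with a second-order expansion of the monodromy. By Proposition~\ref{prop-time-parity}, differentiating the parity identities at $t=0$ immediately yields $r'(0)=0$, $a'(0)$ even in $\lambda$, and $b'(0), c'(0)$ odd. Moreover, Proposition~\ref{prop-monodromy-at-1} gives $\det A_0(t)\equiv -1$ and $\tr(A_0)=0$; combined via Cayley--Hamilton this yields $A_0^2=\Id$, and differentiating $A_0 A_0=\Id$ at $t=0$ followed by taking traces gives the algebraic constraint
\[b'(0)=\lambda\, a'(0)+\tfrac{\lambda^2-1}{4}\,c'(0).\]

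Next, I would expand $\Phi_t=\Phi_0+t\Psi_1\Phi_0+O(t^2)$ where $\Psi_1(z)=\int_0^z\omega$ with $\omega=\Phi_0 A^{\xx_0}\Phi_0^{-1}$. Using \eqref{explicit-potential} at $\xx_0$, a direct computation gives
\[\omega=\frac{n}{2(z^n-1)}\matrix{z^m(\lambda+\lambda^{-1}) & \lambda-\lambda^{-1}\\ -z^{2m}(\lambda-\lambda^{-1}) & -z^m(\lambda+\lambda^{-1})}dz,\]
so $M_1=2\pi\ii R_0$ with $R_0=\Res_{p_0}\omega$, and one checks directly that $R_0^2=\Id$. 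The second-order term in $M(t,\xx_0)=\Id+tM_1+t^2\int_{\gamma_0}\Psi_1\,\omega+O(t^3)$ is evaluated by decomposing $\omega=\frac{R_0}{z-1}dz+\omega_{\mathrm{reg}}$ near $p_0=1$ and writing $\Psi_1(z)=R_0\log\!\big(\tfrac{z-1}{-1}\big)+J(z)$ with $J$ holomorphic at $p_0$ and $J(0)=0$; deforming $\gamma_0$ into $[0,1-\epsilon]$, a small circle around $p_0$, and the return on the next sheet, the logarithmic divergences from the three pieces cancel, leaving the clean commutator formula
\[\tfrac{1}{2}\bigl(M_2(\xx_0)-M_1(\xx_0)^2\bigr)=2\pi\ii\,[J_1,R_0],\qquad J_1=\int_0^1\Bigl(\omega-\tfrac{R_0\,dz}{z-1}\Bigr).\]

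Finally, the entries of $J_1$ reduce via the partial-fraction identity $\tfrac{nx^j-\sum_{k=0}^{n-1}x^k}{x^n-1}=\sum_{p\neq 1}\tfrac{p^{j+1}}{x-p}$ to real integrals $I_j=\int_0^1\tfrac{nx^j-\sum x^k}{x^n-1}dx$, and the combination that arises in the linearization of $\cal G$ simplifies algebraically to
\[I_1-\tfrac{1}{2}(I_2+I_3)=\tfrac{n}{2}\int_0^1\tfrac{(1-x^m)^2}{1-x^n}dx=\kappa_m.\]
Substituting yields $\partial_t\cal F\equiv 0$, $\partial_t\cal G=(\lambda^2-\lambda^{-2})\kappa_m$, and $\partial_t\cal H_1=\partial_t\cal H_2=0$ (since $\lambda^2-\lambda^{-2}$ vanishes at the Sym-points $\lambda=\pm 1$). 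Together with the differentials recorded in Section~\ref{sec:implicit} and the algebraic constraint from the first paragraph, the Implicit Function Theorem uniquely determines $\xx'(0)=\bigl((1-\lambda^2)\kappa_m,\,(\lambda-\lambda^3)\kappa_m,\,0,\,0\bigr)$, which one verifies directly solves every linearized equation. The main obstacle will be the careful bookkeeping of the logarithmic branch at $z=p_0$ in the iterated integral, together with the algebraic verification that the partial-fraction combination collapses to the closed form $\kappa_m$.
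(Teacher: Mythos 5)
Your proposal is correct and its architecture is the same as the paper's: expand the monodromy to second order in $t$, split the result into a commutator term built from the zeroth-order data plus a term linear in $\xx'(0)$, and invert the linear system already set up in Section~\ref{sec:implicit}. The genuinely different ingredient is how the iterated integral is evaluated: the paper computes $\int_{\gamma}[U,dU]$ entry by entry via Lemma~\ref{lemma-integral} (writing $I_k=f_k+\tfrac{1}{n}\log(1-z^n)$ and combining a residue with an integration by parts), whereas you split $\Psi_1=R_0\log\tfrac{z-1}{-1}+J$ and run a keyhole contour, packaging the answer as the single commutator $2\pi\ii\,[J_1,R_0]$ with $J_1=\int_0^1\left(\omega-\tfrac{R_0\,dz}{z-1}\right)$. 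This reproduces the paper's \eqref{term1} exactly (using $I_k-I_\ell=nJ_{k,\ell}$ and $I_m-\tfrac{1}{2}(I_0+I_{2m})=\kappa_m$, which is your relabelled identity), and the $R_0^2=\Id$ cancellation against $\tfrac{1}{2}M_1^2$ works as claimed; it is arguably a cleaner presentation of the same computation. Three small points. First, with $M_2$ defined as the $t^2$-coefficient $\int_{\gamma_0}\Psi_1\omega$, the correct identity is $M_2-\tfrac{1}{2}M_1^2=2\pi\ii\,[J_1,R_0]$, not $\tfrac{1}{2}(M_2-M_1^2)$; since your downstream value $\partial_t\cal{G}=(\lambda^2-\lambda^{-2})\kappa_m$ agrees with the paper's \eqref{eq-NG}, this is only a normalization slip in the displayed formula. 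Second, the constraint $b'=\lambda a'+\tfrac{\lambda^2-1}{4}c'$ from $\det A_0\equiv-1$ is consistent but redundant: the paper derives everything from the unitarity and Sym-point conditions alone, so it serves as a cross-check rather than a needed equation. Third, you should state explicitly that $M(0,\xx)=\Id$ for every $\xx$, hence $\partial_{\xx}M(0,\xx_0)=0$; this is what justifies combining $\partial_t\cal{F},\partial_t\cal{G},\partial_t\cal{H}_i$ at frozen $\xx_0$ with the differentials of Section~\ref{sec:implicit} via the chain rule, with no contribution from $\xx''(0)$.
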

\noindent
The values of $\kappa_m$ for small values of $m$ are tabulated below:
\begin{center}
\begin{tabular}{|c|c|}
\hline
$m$&$\kappa_m$\\
\hline
1&$\ln 2$\\
2&$\frac{3}{2}\ln 3$\\
3&$2\ln 2+\sqrt{2}\ln(1+\sqrt{2})$\\
4&$\frac{5}{4}\ln 5+\frac{\sqrt{5}}{2}\ln(2+\sqrt{5})$\\
5&$\ln 2+\frac{3}{2}\ln 3+\sqrt{3}\ln(2+\sqrt{3})$\\
\hline\end{tabular}
\end{center}

\noindent
Proof of Proposition \ref{prop-derivatives}:
First of all, by Proposition \ref{prop-time-parity}, $r'(0)=0$. Let
$$N(t):=M'(t)M(t)^{-1}.$$
Since the Monodromy Problem is solved for $\eta^{\xx(t)}_t$, we have by Proposition \ref{prop-monodromy-at-1}:

$$\left\{\begin{array}{l}
N(t)\in\Lambda\su(2)\\
N(t)(\lambda=\pm 1)=\pm 2\pi\ii \matrix{1&0\\0&-1}\end{array}\right.$$
from which we deduce

\begin{equation}
\label{problem-dN}
\left\{\begin{array}{l}
N'(0)\in\Lambda\su(2)\\
N'(0)(\lambda= \pm 1)=0\end{array}\right. 
\end{equation}
as $\Lambda\su(2)$ is a $\R$-vector space. Our first goal is to compute $N'(0)$ in terms of the parameters $\xx(0)$ and its derivatives. Then the derivatives of the parameters are obtained by
solving \eqref{problem-dN}.
Recall that $\Phi_t$ is the solution of $d_{\Sigma}\Phi_t=\Phi_t\eta_t$ in the universal cover $\wt{\Sigma}$
of $\C\setminus\{p_0,\cdots,p_{n-1}\}$ with initial condition $\Phi_t(0)=\Id$.
Also recall that $\gamma=\gamma_1$ is a closed curve enclosing the point $p_0=1$
and such that $\gamma(0)=\gamma(1)=0$.
By Proposition 8 in \cite{nnoids}, we have for all $t$

$$N(t)=\int_{\gamma}\Phi_t\eta_t'\Phi_t^{-1}$$
where we denote the lift of $\gamma$ to $\wt{\Sigma}$  still by $\gamma$.
Hence

$$N'(0)=\int_{\gamma}\Phi_0'\eta_0'\Phi_0^{-1}
+\Phi_0\eta_0''\Phi_0^{-1}-\Phi_0\eta_0'\Phi_0^{-1}\Phi_0'\Phi_0^{-1}.$$
Let

$$U=\Phi_0'\Phi_0^{-1}.$$
It is easy to check that (for details  compare with the proof of Proposition 8 in \cite{nnoids})

$$dU=\Phi_0\eta_0'\Phi_0^{-1}.$$
Thus

\begin{equation}
\label{eq-dN}
N'(0)=\int_{\gamma}U\,dU+\Phi_0\eta_0''\Phi_0^{-1}-dU\, U.\end{equation}
Our next goal is to compute the commutator $[U,dU]$.
Using Equations \eqref{explicit-potential}, \eqref{eq-central} and $r'(0)=0$, we compute

$$\eta_0'=
\frac{n\,dz}{2(z^n-1)}\matrix{2\lambda z^m&\lambda-\lambda^{-1}\\-4\lambda z^{2m}&-2\lambda z^m}.$$
This gives

$$dU=
\frac{n\,dz}{2(z^n-1)}\matrix{ (\lambda+\lambda^{-1})z^m&\lambda-\lambda^{-1}\\
(\lambda^{-1}-\lambda)z^{2m}&-(\lambda+\lambda^{-1})z^m}.$$
Let

$$I_k(z)=\int_0^z\frac{w^k\,dw}{w^n-1}.$$
Since $\Phi_t(0)=\Id$, we have $U(0)=0$, so integration yields

$$U=
\frac{n}{2}\matrix{(\lambda+\lambda^{-1})I_m&(\lambda-\lambda^{-1})I_0\\ (\lambda^{-1}-\lambda)I_{2m}&-(\lambda+\lambda^{-1})I_m}$$
which gives

\begin{equation}
\label{eq-commutator}
[U,dU]=\frac{n^2dz}{4(z^n-1)}\matrix{(\lambda-\lambda^{-1})^2(I_{2m}-z^{2m}I_0)&2(\lambda^2-\lambda^{-2})(I_m-z^mI_0)\\2(\lambda^2-\lambda^{-2})(z^{2m}I_m-z^mI_{2m})&(\lambda-\lambda^{-1})^2(z^{2m}I_0-I_{2m})}.
\end{equation}
To proceed, we compute the integrals involved in $\int_{\gamma}[U,dU]$.

\begin{lemma}
\label{lemma-integral}
With $J_{k,\ell}=\int_0^1\frac{x^k-x^{\ell}}{x^n-1}dx$ we have
$$\int_{\gamma}\frac{I_k\, z^{\ell}-I_{\ell}\,z^k}{z^n-1}dz=\frac{4\pi\ii}{n}J_{k,\ell}.$$
\end{lemma}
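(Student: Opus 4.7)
The plan is to deform $\gamma$ to a keyhole contour around $p_0=1$ and exploit the logarithmic monodromy of $I_k$ at that puncture. More precisely, I will replace $\gamma$ by the concatenation of the segment $[0,1-\epsilon]$, a small positively oriented circle $C_\epsilon$ of radius $\epsilon$ centred at $1$, and the segment $[1-\epsilon,0]$ traversed backwards. Since the integrand is holomorphic on the universal cover away from $z=1$, this deformation does not change the integral, and the claim will follow by computing the limits of the three pieces as $\epsilon\to 0$.

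For the two straight segments, observe that the residue of $\frac{w^k}{w^n-1}$ at $w=1$ equals $\tfrac{1}{n}$, so continuing $I_k$ once around $1$ in the positive sense adds $\tfrac{2\pi\ii}{n}$ to it. Hence on the return segment $I_k$ must be replaced by $I_k+\tfrac{2\pi\ii}{n}$ (and similarly for $I_\ell$), and the difference of the outgoing and returning integrands simplifies to
\[
\frac{2\pi\ii\,(z^k-z^\ell)}{n(z^n-1)}.
\]
The combined contribution of the two segments is therefore $\frac{2\pi\ii}{n}\int_0^{1-\epsilon}\frac{z^k-z^\ell}{z^n-1}\,dz$, and since the integrand is regular at $z=1$ (both numerator and denominator vanish to first order there), this tends to $\frac{2\pi\ii}{n}J_{k,\ell}$ as $\epsilon\to 0$.

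For the circle contribution I would expand $I_k$ near $z=1$. Writing $\frac{w^k}{w^n-1}=\frac{1}{n(w-1)}+\phi_k(w)$ with $\phi_k$ holomorphic at $w=1$ yields $I_k(z)=\frac{1}{n}\log(z-1)+H_k(z)$ with $H_k$ holomorphic at $z=1$, and, on the chosen branch,
\[
H_k(1)-H_\ell(1)=\int_0^1(\phi_k-\phi_\ell)\,dw=\int_0^1\frac{w^k-w^\ell}{w^n-1}\,dw=J_{k,\ell},
\]
the branch-dependent shifts cancelling in the difference. Substituting into $g(z)=\frac{I_k z^\ell-I_\ell z^k}{z^n-1}$ and using $z^\ell-z^k=O(z-1)$, the logarithmic piece produces an integrand whose integral over $C_\epsilon$ is $O(\epsilon|\log\epsilon|)$ and hence vanishes in the limit. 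The remaining term $\frac{H_k z^\ell-H_\ell z^k}{z^n-1}$ has a simple pole at $z=1$ with residue $\frac{H_k(1)-H_\ell(1)}{n}=\frac{J_{k,\ell}}{n}$, so by the residue theorem it contributes $\frac{2\pi\ii}{n}J_{k,\ell}$.

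Adding the two contributions yields the stated $\frac{4\pi\ii}{n}J_{k,\ell}$. The delicate step is the bookkeeping of the logarithmic term on $C_\epsilon$: one must check that, after multiplication by the meromorphic factor $\frac{z^\ell-z^k}{n(z^n-1)}$, the resulting $\log$--type singularity has vanishing circle integral, which follows from the elementary estimate $\int_{|z-1|=\epsilon}\log(z-1)\,dz=O(\epsilon)$.
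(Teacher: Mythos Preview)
Your keyhole argument is correct. The paper takes a slightly different route: it stays on the original loop $\gamma$ but subtracts off the logarithmic singularity globally by writing
\[
I_k(z)=f_k(z)+\tfrac{1}{n}\log(1-z^n),\qquad f_k(z)=\int_0^z\frac{w^k-w^{n-1}}{w^n-1}\,dw,
\]
with $f_k$ holomorphic in the disk bounded by $\gamma$. The integral then splits into a pure residue term $\int_\gamma\frac{f_k z^\ell-f_\ell z^k}{z^n-1}=\frac{2\pi\ii}{n}(f_k(1)-f_\ell(1))=\frac{2\pi\ii}{n}J_{k,\ell}$ and a logarithmic piece $\frac{1}{n}\int_\gamma\frac{z^\ell-z^k}{z^n-1}\log(1-z^n)$, which an integration by parts reduces to another residue giving the second $\frac{2\pi\ii}{n}J_{k,\ell}$. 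The two proofs mirror each other closely: your segment/monodromy contribution corresponds to the paper's integration-by-parts term, and your small-circle residue of $\frac{H_k z^\ell-H_\ell z^k}{z^n-1}$ corresponds to the paper's residue of $\frac{f_k z^\ell-f_\ell z^k}{z^n-1}$ (indeed $f_k-f_\ell=H_k-H_\ell$, since $\log(1-z^n)$ and $\log(z-1)$ differ by a function independent of $k$). The paper's approach is a bit cleaner in that no $\epsilon\to 0$ estimates on the $\log$ term are needed; your contour deformation is more geometric and makes the two halves of the answer visible as ``monodromy plus residue''.
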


\noindent
\begin{proof}
Let $D$ be the disk bounded by $\gamma$.
Then
 
$$f_k(z)=\int_{0}^{z}\frac{w^k-w^{n-1}}{w^n-1}dw$$

\noindent
is holomorphic in $D$ because the integrand extends holomorphically to $1$,
and

$$I_k(z)=f_k(z)+\int_0^z\frac{w^{n-1}}{w^n-1}dw=f_k(z)+\frac{1}{n}\log(1-z^n).$$

\noindent
Therefore, we have

$$\int_{\gamma}\frac{I_k\,z^{\ell}-I_{\ell}\,z^k}{z^n-1}
=\int_{\gamma}\frac{f_k\,z^{\ell}-f_{\ell}\,z^k}{z^n-1}
+\frac{1}{n}\int_{\gamma}\frac{z^{\ell}-z^k}{z^n-1}\log(1-z^n).$$

\noindent
The first term on the right hand side can be computed via the Residue Theorem 

$$\int_{\gamma}\frac{f_k\,z^{\ell}-f_{\ell}\,z^k}{z^n-1}=\frac{2\pi\ii}{n}(f_k(1)-f_{\ell}(1))
=\frac{2\pi\ii}{n}J_{k,\ell}.$$

\noindent
The second term can be computed via integration by parts and then applying the Residue Theorem:

\begin{eqnarray*}
\int_{\gamma}\frac{z^{\ell}-z^k}{z^n-1}\log(1-z^n)
&=&\int_{\gamma}(f'_{\ell}-f'_k)\log(1-z^n)\\
&=&\left[(f_{\ell}-f_k)\log(1-z^n)\right]_{\gamma(0)}^{\gamma(1)}-\int_{\gamma}(f_{\ell}-f_k)\frac{nz^{n-1}}{z^n-1}\\
&=&0-2\pi\ii (f_{\ell}(1)-f_k(1)).
\end{eqnarray*}\end{proof}

\medskip

\noindent
\begin{proof}[Proof of Proposition  \ref{prop-derivatives} continued]
Using Equation \eqref{eq-commutator} and Lemma \ref{lemma-integral}, we obtain

\begin{equation}
\label{term1}
\int_{\gamma}[U,dU]=
\pi\ii\,n\matrix{(\lambda-\lambda^{-1})^2J_{2m,0}&
2(\lambda^2-\lambda^{-2})J_{m,0}\\2(\lambda^2-\lambda^{-2})J_{m,2m}&
-(\lambda-\lambda^{-1})^2J_{m,0}}, 
\end{equation}

\noindent
and by Leibniz rule we have

$$\eta''_0=\matrix{0&0\\r'' mz^{m-1}dz&0}+2\sum_{i=0}^{n-1}\matrix{a'_i&\lambda^{-1}b'_i\\\lambda c'_i&-a'_i}\frac{dz}{z-p_i}$$
where $a'$, $b'$, $c'$ are evaluated at $t=0$. By the Residue Theorem

\begin{eqnarray}
\int_{\gamma}\Phi_0\eta''_0\Phi_0^{-1}&=&
4\pi\ii\Res_1\Phi_0\matrix{a'&\lambda^{-1}b'\\\lambda c'&-a'}\Phi_0^{-1}\frac{dz}{z-1}
\nonumber\\
&=&4\pi\ii\matrix{a'-\lambda^{-1}b'&\lambda^{-1}b'\\ 2a'-\lambda^{-1}b'+\lambda c'&-a'+\lambda^{-1}b'}.
\label{term2}\end{eqnarray}
Recall from Equation \eqref{eq-dN}
that $N'(0)$ is the sum of \eqref{term1} and \eqref{term2}.
We now solve Problem \eqref{problem-dN} by the method of Section \ref{sec:implicit}.
By Proposition \ref{prop-time-parity}, $b'_{\mid\lambda=0}=0,$ so we may write $b'=\lambda\wt{b}'$.
We then have
\begin{equation}
\label{eq-NF}
0=N'_{11}+N'_{11}\mbox{}^*=4\pi\ii(a' -\wt{b}' -a'\mbox{}^*+\wt{b}'\mbox{}^*)
\end{equation}
\begin{equation}
\label{eq-NG}
0=N'_{21}+N'_{12}\mbox{}^*=4\pi\ii\left( \kappa_m(\lambda^2-\lambda^{-2})+2a'-\wt{b}'+\lambda c'-\wt{b}'\mbox{}^*\right).
\end{equation}
Projecting Equation \eqref{eq-NG} on $\cal{W}^{<0}$, $\cal{W}^{>0}$ and $\cal{W}^0$ and Equation \eqref{eq-NF} on $\cal{W}^{>0}$ we obtain
$$\left\{\begin{array}{l}
\wt{b}'\mbox{}^+=a'\mbox{}^+=-\lambda^2\kappa_m\\
\quad c'=0\\
a'\mbox{}^0=\wt{b}'\mbox{}^0\,.\end{array}\right.$$
\noindent
Then 
$$0=N'_{12}|_{\lambda=1}=4\pi\ii\wt{b}'(1)=4\pi\ii(\wt{b}'\mbox{}^0-\kappa_m)$$
gives
$$
\wt{b}'\mbox{}^0=\kappa_m$$
concluding the proof.
\end{proof}

\section{Area estimates for Lawson surfaces}\label{sec:construction}

\noindent
In this section we first compute the area in terms of the DPW potential and then show that the surfaces we construct by Proposition \ref{prop:IFT} yields Lawson surfaces for certain rational values of $t$ small enough.

\subsection{The area of a minimal surface via DPW}\label{sec:area}$\;$\\

\begin{proposition}
Let $\eta$ be a holomorphic DPW potential on a compact domain $\Omega$ such that a solution $\Phi$ of $d_\Omega\Phi=\Phi\eta$ solves the Monodromy Problem \eqref{monodromy-problem}.
Let $(F,B)$ the Iwasawa decomposition of $\Phi$
and $f$ the resulting minimal immersion in $\S^3$.
Then

\begin{equation}
\label{area-formula}
\Area(f(\Omega))=-2\ii\int_{\partial\Omega}\tr(\eta_{-1}B_0^{-1}B_1),
\end{equation}

\noindent
where $B=\sum_{k=0}^{\infty} \lambda^kB_k$ and $\eta=\sum_{k=-1}^{\infty}\lambda^k\eta_{k}$.
\end{proposition}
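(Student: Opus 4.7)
The plan is to exhibit the area 2-form as exact and then apply Stokes' theorem.

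First, I express the area intrinsically in terms of the Maurer--Cartan form $\alpha := F^{-1}dF$ of the unitary frame. The conformality of the loop of flat connections forces the decomposition $\alpha = \lambda^{-1}A_{-1} + A_0 + \lambda A_1$, where $A_{-1}$ is a nilpotent $(1,0)$-form, $A_1$ is the adjoint $(0,1)$-form determined by the reality condition $\alpha^{\lambda}\in\su(2)$ for $|\lambda|=1$, and $A_0$ is $\su(2)$-valued. Differentiating the Sym--Bobenko formula \eqref{symbob} gives $f^{-1}df = \mathrm{Ad}(F^{\lambda=-1})(2A_{-1}+2A_1)$. Using the bi-invariance of the round metric on $\S^3 \cong \SU(2)$ together with the nilpotency of $A_{\pm 1}$ (which kills the $(dz)^2$ and $(d\bar z)^2$ contributions and reproduces the conformality of $f$), a direct computation of the induced first fundamental form yields
\[\Area(f(\Omega)) = -2\ii\int_{\Omega}\tr(A_{-1}\wedge A_1).\]

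Second, I connect $A_{\pm 1}$ to the DPW data via the gauge identity \eqref{nablalambda}, $\alpha = B\eta B^{-1} - dB\cdot B^{-1}$. Matching $\lambda^{-1}$-coefficients yields $A_{-1}= B_0\eta_{-1}B_0^{-1}$. For $A_1$ the crucial observation is that $\Phi$ is holomorphic in $z$ (since $\eta\in\Omega^{1,0}$), so $\bar\partial\Phi = 0$. Differentiating $\Phi = FB$ gives $F^{-1}\bar\partial F = -\bar\partial B\cdot B^{-1}$; extracting the $\lambda^1$-coefficient and setting $C_1 := B_0^{-1}B_1$, a short calculation in which the $\bar\partial B_0$ terms cancel produces the structural identity $\bar\partial C_1 = -B_0^{-1}A_1 B_0$.

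Closedness of $\eta$ implies $d\eta_{-1}=0$, so $\eta_{-1}$ is a holomorphic $(1,0)$-form. Since the wedge of two $(1,0)$-forms on a Riemann surface vanishes, $\eta_{-1}\wedge\partial C_1 = 0$; combined with cyclicity of the trace this gives
\[d\tr(\eta_{-1} C_1) = -\tr(\eta_{-1}\wedge\bar\partial C_1) = \tr(\eta_{-1}\wedge B_0^{-1}A_1 B_0) = \tr(B_0\eta_{-1}B_0^{-1}\wedge A_1) = \tr(A_{-1}\wedge A_1).\]
Combining with the area identity of the first step and applying Stokes' theorem on the compact domain $\Omega$ yields the claim.

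The step I expect to be most delicate is the first: translating the Sym--Bobenko formula into the trace integral $-2\ii\,\tr(A_{-1}\wedge A_1)$ requires the correct normalization of the bi-invariant metric on $\S^3$ and attentive handling of the reality involution relating $A_1$ and $A_{-1}$. Once this is in place, the remaining steps amount to the gauge-theoretic identity above combined with Stokes.
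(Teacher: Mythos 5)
Your proposal is correct and follows essentially the same route as the paper: identify the area form as $-2\ii\,\tr(A_{-1}\wedge A_1)$ with $A_{-1}=B_0\eta_{-1}B_0^{-1}$ and $A_1$ expressed through $\bar\partial B$ via the gauge relation \eqref{nablalambda}, recognize this $2$-form as $d\,\tr(\eta_{-1}B_0^{-1}B_1)$ using holomorphicity of $\eta_{-1}$ and type considerations, and conclude by Stokes. The only difference is presentational: the paper carries out the first step in a local conformal coordinate with the conformal factor $\rho$ and explicit matrix entries $U_{-1}$, $V_1$, whereas you phrase it intrinsically through the $\lambda$-expansion of the Maurer--Cartan form and the shorthand $C_1=B_0^{-1}B_1$.
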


\noindent
\begin{proof} First, observe that $B$ is globally defined on $\Omega,$ because $\Phi$ solves the Monodromy Problem. The minimal surface $f$ comes with an associated family of flat connections given by  

$$d_\Omega + F^{-1}d_\Omega F.$$
\noindent
In a local coordinate $z$, we can split the connection $1$-form into its complex linear and anti-linear parts

$$F^{-1}d_\Omega F=U dz+Vd\z,$$

\noindent
and 
 compute (compare with \cite{Bobenko_1991,He})

$$U=\matrix{\rho^{-1}\rho_z&\lambda^{-1}\rho^2 a_{-1}\\b_0\rho^{-2}& -\rho^{-1}\rho_z}\qquad
V=\matrix{-\rho^{-1}\rho_{\z}&-\overline{b_0}\rho^{-2}\\-\lambda\rho^2\overline{a_{-1}}&\rho^{-1}\rho_{\z}},
$$
for some real valued and positive function $\rho$ with

$$
\eta_k=\matrix{c_k&a_k\\b_k&-c_k}dz.$$
Then the induced volume form $dA$ of the minimal immersion $f$ is computed to be

\begin{equation}
\begin{split}
dA&= 4\rho^4|a_{-1}|^2dx\wedge dy \\
&=-2\ii\,\tr\matrix{0&\rho^2a_{-1}\\0&0}\matrix{0&0\\-\rho^2\overline{a}_{-1}&0}dz\wedge d\z\\
&= -2\ii\,\tr(U_{-1}dz\wedge V_1d\z)
\end{split}
\end{equation}

\noindent
Let  $\partial B$ and $\overline{\partial}B$ denote the complex linear and complex anti-linear part of $dB.$ 
Then we have by \eqref{nablalambda}  

$$U_{-1}dz=B_0\eta_{-1}B_0^{-1}\quad\mbox{ and }
\quad V_{1}d\z=-\overline{\partial}B_1 B_0^{-1}+\overline{\partial} B_0\, B_0^{-1}B_1B_0^{-1}.$$
Using properties of the trace we obtain

$$\tr(U_{-1}dz\wedge V_1d\z)=\tr(-\eta_{-1}B_0^{-1}\wedge\overline{\partial}B_1
+\eta_{-1}\wedge B_0^{-1}\overline{\partial} B_0 \,B_0^{-1}B_1).$$

\noindent
Moreover, because $\eta$ is holomorphic 

$$d(\eta_{-1} B_0^{-1}B_1)=\eta_{-1}\wedge(B_0^{-1}\overline{\partial}B_0\, B_0^{-1}B_1-B_0^{-1}\overline{\partial}B1).$$

\noindent
Therefore,  

$$\Area(f(\Omega))=-2\ii\int_{\Omega}d\,\tr(\eta_{-1} B_0^{-1}B_1)=
-2\ii\int_{\partial\Omega}\tr(\eta_{-1} B_0^{-1}B_1)$$ by Stokes' theorem. 
\end{proof}

\noindent
In our case (the pull-back of) $\eta$ will have apparent singularities at $p_j$ and the corresponding boundary terms in 
Equation \eqref{area-formula} can be computed as residues.

\begin{proposition} 
Let $\eta$ be a DPW potential with a singularity at $z=p$ and $G$ a gauge such that
$\eta. G$ extends holomorphically to the disc $D(p,r)$ of radius $r>0$ around $p$. Then

$$\lim_{r\to 0} \int_{\partial D(p,r)}\tr(\eta_{-1}B_0^{-1}B_1)=-2\pi\ii\,\Res_{p}\tr(\eta_{-1}G_1G_0^{-1}).$$

\end{proposition}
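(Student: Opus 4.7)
The plan is to split the integrand $\tr(\eta_{-1}B_0^{-1}B_1)$ near the singular point $p$ into two pieces: one that extends smoothly across $p$, whose boundary integral therefore vanishes as $r\to 0$, and one that is meromorphic at $p$, whose boundary integral is computed by the classical residue theorem.

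The first step is to compare the Iwasawa decompositions of $\Phi$ and of the gauged solution $\hat\Phi=\Phi G$. By the discussion recalled in Subsection~\ref{subsubapp}, if $(F,B)$ and $(\hat F,\hat B)$ denote these decompositions, then $F_0\hat B=BG$, where $F_0\in\SU(2)$ comes from the finite-dimensional Iwasawa factorisation $B_0G_0=F_0\hat B_0$. Since $\hat\eta=\eta.G$ is holomorphic on $D(p,r)$ and the Iwasawa decomposition is a smooth map, both $\hat B_0$ and $\hat B_1$ are smooth in $z$ throughout $D(p,r)$, up to and including the point $p$.

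Next I would expand the identity $F_0\hat B=BG$ in powers of $\lambda$. Comparing the constant and linear terms yields $F_0\hat B_0=B_0G_0$ and $F_0\hat B_1=B_0G_1+B_1G_0$, from which
\[
B_0^{-1}B_1=G_0\hat B_0^{-1}\hat B_1 G_0^{-1}-G_1G_0^{-1}.
\]
Similarly, extracting the $\lambda^{-1}$ coefficient from the gauge identity $\eta=G\hat\eta G^{-1}-dG\,G^{-1}$ (and noting that $dG\,G^{-1}$ is holomorphic at $\lambda=0$) gives $\eta_{-1}=G_0\hat\eta_{-1}G_0^{-1}$. Substituting both expressions into $\tr(\eta_{-1}B_0^{-1}B_1)$ and using cyclicity of the trace yields the crucial decomposition
\[
\tr(\eta_{-1}B_0^{-1}B_1)=\tr(\hat\eta_{-1}\hat B_0^{-1}\hat B_1)-\tr(\eta_{-1}G_1G_0^{-1}).
\]

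Finally I would treat the two terms on $\partial D(p,r)$ separately. In the first, $\hat\eta_{-1}$ is a holomorphic $1$-form on $D(p,r)$ and $\hat B_0^{-1}\hat B_1$ is continuous at $p$, so $\tr(\hat\eta_{-1}\hat B_0^{-1}\hat B_1)$ is a bounded $1$-form near $p$ and its integral over $\partial D(p,r)$ is $O(r)$, which tends to $0$ as $r\to 0$. In the second, $\tr(\eta_{-1}G_1G_0^{-1})$ is a meromorphic $1$-form in $z$ near $p$, since both $\eta$ and the gauge $G$ are meromorphic there, so by the residue theorem
\[
\int_{\partial D(p,r)}\tr(\eta_{-1}G_1G_0^{-1})=2\pi\ii\,\Res_{p}\tr(\eta_{-1}G_1G_0^{-1})
\]
for all sufficiently small $r$. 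Subtracting and passing to the limit gives the stated formula. The main subtlety is keeping careful track of the finite-dimensional factor $F_0$, which survives the expansion through $F_0\hat B_0=B_0G_0$ but drops out of the trace thanks to cyclicity together with $\hat\eta_{-1}=G_0^{-1}\eta_{-1}G_0$.
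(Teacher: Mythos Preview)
Your argument is correct and follows essentially the same route as the paper: both compare the Iwasawa data of $\Phi$ and $\Phi G$, derive the identity $\tr(\eta_{-1}B_0^{-1}B_1)=\tr(\hat\eta_{-1}\hat B_0^{-1}\hat B_1)-\tr(\eta_{-1}G_1G_0^{-1})$, and then let the first boundary integral vanish by smoothness while evaluating the second by the residue theorem. Your bookkeeping of the finite-dimensional unitary factor $F_0$ is in fact a bit more careful than the paper's (which calls it $D$ and describes it somewhat loosely), but the structure of the proof is identical.
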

\noindent
\begin{proof} Let $\wh{\eta}=\eta. G$, $\wh{\Phi}=\Phi G$ and $(\wh{F},\wh{B})$ be the Iwasawa
decomposition of $\wh{\Phi}$. Then
$$B=D\wh{B}G^{-1},$$
where $D$ is the unitary part of $G(0)$, i.e., it is a constant and diagonal matrix.
We have

\begin{equation}
\begin{split}
\eta_{-1}&=G_0\wh{\eta}_{-1}G_0^{-1},\\
B_0&=D\wh{B}_0 G_0^{-1},\\
B_1&=D(\wh{B}_1G_0^{-1}-\wh{B}_0G_0^{-1}G_1G_0^{-1}),\\
\eta_{-1}B_0^{-1}B_1&=G_0\wh{\eta}_{-1}\wh{B}_0^{-1}\wh{B}_1G_0^{-1}-\eta_{-1}G_1G_0^{-1}.\\ 
\end{split}
\end{equation}
Therefore,

$$\int_{\partial D(p,r)}\tr(\eta_{-1}B_0^{-1}B_1)=\int_{\partial D(p,r)}\tr(\wh{\eta}_{-1}\wh{B}_0^{-1}\wh{B}_1)-\int_{\partial D(p,r)}\tr(\eta_{-1}G_1 G_0^{-1}).$$
The first integral  on the right hand side goes to $0$ as $r\to 0,$ because $\wh{\eta}$ and $\wh{B}$ are smooth in
$D(p,r)$. The proposition then follows from the Residue Theorem.\end{proof}

\begin{corollary}\label{cor:areares}
Let $\Sigma$ be a compact Riemann surface and $\eta$ a DPW potential with $n$ apparent singularities at $p_0,\cdots,p_{n-1}$ solving the Monodromy problem \eqref{monodromy-problem}. Then

$$\Area(f(\Sigma))=4\pi\sum_{j=0}^{n-1}\Res_{p_j}\tr(\eta_{-1} G^j_1(G^j_0)^{-1})$$
where $G^j$ is a local gauge such that $\eta. G^j$ extends holomorphically to $p_j$.
\end{corollary}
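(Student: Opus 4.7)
The plan is to obtain the Corollary as a direct exhaustion argument combining the two preceding propositions. The first supplies the area as a boundary integral on any compact domain where the potential is holomorphic; the second identifies the contribution near an apparent singularity as a residue. What remains is to exhaust the closed surface by domains whose boundaries shrink onto the singularities, with careful bookkeeping of orientations.

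Concretely, for small $r>0$ I would set $\Omega_r = \Sigma \setminus \bigcup_{j=0}^{n-1}\overline{D(p_j,r)}$. Since the $p_j$ are apparent singularities and the monodromy problem is solved, $\eta$ is holomorphic on $\Omega_r$ and $B$ is globally defined there, so the first proposition applies and yields
$$\Area(f(\Omega_r)) = -2\ii \int_{\partial \Omega_r}\tr(\eta_{-1} B_0^{-1} B_1).$$
Orienting $\partial \Omega_r$ as the boundary of $\Omega_r$, each circle around $p_j$ is traversed clockwise relative to $p_j$, so that $\int_{\partial \Omega_r} = -\sum_j \int_{\partial D(p_j,r)}$ with $\partial D(p_j,r)$ carrying its standard counterclockwise orientation. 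This converts the formula into
$$\Area(f(\Omega_r)) = 2\ii \sum_{j=0}^{n-1} \int_{\partial D(p_j,r)}\tr(\eta_{-1} B_0^{-1} B_1).$$

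Next I would pass to the limit $r\to 0$. On the left, the apparent-singularity condition ensures that the minimal immersion $f$ extends real analytically across each $p_j$, so $\Area(f(\Omega_r)) \to \Area(f(\Sigma))$ by monotone convergence of the area measures. On the right, the second proposition applied at each $p_j$ with the local gauge $G^j$ gives
$$\lim_{r \to 0}\int_{\partial D(p_j,r)}\tr(\eta_{-1} B_0^{-1} B_1) = -2\pi\ii\,\Res_{p_j}\tr(\eta_{-1} G^j_1 (G^j_0)^{-1}).$$
Multiplying the prefactors yields $2\ii \cdot (-2\pi\ii) = 4\pi$, which produces exactly the claimed identity.

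The only non-mechanical point, and where I would be most careful, is the sign arising from the orientation of $\partial \Omega_r$ as the boundary of the complement of small disks; once this is pinned down, the proof is a one-line assembly of the previous two propositions. A secondary check worth making is that the extension of $f$ across $p_j$ is genuinely of finite area (so that $\Area(f(\Omega_r)) \to \Area(f(\Sigma))$ is unproblematic), but this is immediate from the definition of an apparent singularity together with compactness of $\Sigma$.
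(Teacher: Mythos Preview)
Your proposal is correct and is precisely the argument the paper has in mind: the corollary is stated without proof as an immediate consequence of the two preceding propositions, and your exhaustion by $\Omega_r=\Sigma\setminus\bigcup_j\overline{D(p_j,r)}$ together with the orientation bookkeeping and the prefactor computation $2\ii\cdot(-2\pi\ii)=4\pi$ is exactly how one assembles them.
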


\noindent
Example: Consider the potential $\eta$ for a great sphere and the gauge $G$ given by

$$\eta=\matrix{0&\lambda^{-1}\\0&0}dz\qquad \text{and} \qquad
G=\matrix{z&0\\-\lambda&z^{-1}}.$$
We have $\eta. G$ is holomorphic at $z=\infty$. Then

$$\Res_{\infty}\tr(\eta_{-1}G_1 G_0^{-1})=\Res_{\infty}\tr\matrix{0&dz\\0&0}\matrix{0&0\\-1&0}\matrix{z^{-1}&0\\0&z}=\Res_{\infty}\frac{-dz}{z}=1,$$
from which we obtain that the area of a great sphere is $4\pi$.
\subsection{Construction of compact minimal surfaces}\label{sec:con}$\;$\\
In Proposition \ref{prop:IFT} we have constructed a family of DPW potentials $\eta_t^{\xx(t)}$ over $\C P^1$ with $n+1$ singularities at $z=p_j, $ $j=0, ..., n-1$, and at $z=\infty$. By solving \eqref{IFTeqn}, the singularity at $z= \infty$ becomes apparent (Proposition \ref{apparent}), i.e., the corresponding minimal surface $f$ extends smoothly to $z=\infty.$ The monodromy at the other $n$ singularities $M_j(\lambda=\pm1)$ were computed in Proposition \ref{prop-monodromy-at-1}.
For $t= \tfrac{1}{2k+2}$, we obtain $M_j^{k+1}(\lambda= 1) =M_j^{k+1}(\lambda= -1)  = - \Id,$ for all $j= 0, ..., 2m+1.$
In other words, the Monodromy Problem \eqref{monodromy-problem} is solved on a $(k+1)$-fold cover of $\C P^1$ branched at $p_j$\\

\noindent
Thus let $t=\frac{1}{2k+2}$ for $k\in\Z,$ $k\gg1$ in following 
and consider the compact Riemann surface $\Sigma=\Sigma_{m,k}$ of genus $g=mk$ given by the algebraic equation

\[y^{k+1}=\frac{z^{m+1}-1}{z^{m+1}+1}.\]
The
$(k+1)$-fold covering given by 

\[\pi\colon\Sigma\longrightarrow \C P^1, (y,z)\mapsto z\]
is totally branched over $p_j, j= 0, ..., 2m+1.$ 
Note that the monodromy $\mu$ (see Chapter II of \cite{Donald}) of the covering $\Sigma\to\C P^1$ is given by an element of the permutation group

\[\sigma\in\mathcal S_{k+1}\]
of order $k+1$ such that

\begin{equation}\label{eq:covmon}\mu(\gamma_{2j})=\sigma\quad \mu(\gamma_{2j+1})=\sigma^{-1},\end{equation}
for $j=0,..,m$ and simple closed curves $\gamma_j$ around the branch points $p_j$. \\

\noindent
Consider the pull-back DPW potential
$\pi^*\eta^\lambda$ on $\Sigma. $ It can be locally desingularized around the preimages of the branch points $\hat p_j = \pi^{-1}(p_j)$  as follows:
Let $w$ be a local holomorphic coordinate on $\Sigma$ centered at $\hat p_j$ such that

\[w^{k+1}=z-p_j.\]
Since $t(k+1)=\frac{1}{2}$, the residue of the connection $d+ \pi^*\eta$ at $w=0$ is
\begin{equation}\label{eq:28}t(k+1)A_j(\lambda)=\tfrac{1}{2}\matrix{ a_j(\lambda)&\lambda^{-1} b_j(\lambda)\\ \lambda c_j(\lambda)&- a_j(\lambda)}
\end{equation}
for $a_j, b_j, c_j \in \mathcal W^{\geq 0}$ as in Section \ref{sec:symm} satisfying, by Proposition \ref{prop-monodromy-at-1}

\begin{equation}
\label{eq-abc}
- a_j(\lambda)^2- b_j(\lambda)c_j(\lambda)=-1.
\end{equation}

\noindent
Consider the local gauge transformation

\begin{equation}\label{locgaugeeq}
g_j=g_j(w,\lambda)=\matrix{\frac{ b_j(\lambda)}{1-a_j(\lambda)} &0 \\ \lambda&\frac{1- a_j(\lambda)}{b_j(\lambda)}}\matrix{\frac{1}{\sqrt{w}}&0\\0&\sqrt{w}},
\end{equation}
\noindent
which is well-defined on a double covering of the $w$-disc (centered at $w=0$) and some $\lambda$-disc centered at $\lambda=0$.
A computation gives
\[\hat\eta:=\pi^*\eta. g_j=\matrix{0 & \frac{(a_j-1)^2}{2\lambda  b_j}\\
\frac{\lambda b_j(a_j^2+b_j c_j-1)}{(a_j-1)^2w^2}&0}dw
+O(w^{k-1})dw\]
which extends holomorphically to $w=0$ thanks to Equation \eqref{eq-abc}.
 Moreover, the $\lambda^{-1}$-term
of $\hat\eta$ is non-zero at $w=0$.
\begin{remark}
The gauge \eqref{locgaugeeq} is not necessarily well-defined on the 
whole $\lambda$-unit-disc. Therefore, we need to apply the $r$-Iwasawa decomposition instead of the ordinary Iwasawa decomposition for the reconstruction. This does not alter the corresponding minimal surface.
\end{remark}

\noindent
On the domain of the coordinate $w$ the minimal surfaces $\hat f$ and $f$ obtained from the DPW potentials $\hat \eta$ and $\pi^*\eta,$ respectively, coincide. Thus $f$ extends smoothly to $w=0$ \footnote{In order to see that one does actually obtain the same surface, one can first work on a double covering of the $w$-plane, and then prove that the unitary factor of the Iwasawa decomposition is already defined on the $w$-plane, while the gauge and the positive part of the Iwasawa decomposition have monodromy $-\Id$ around $w=0$.}. We have shown the following
\begin{proposition} \label{extension}
For $t= \tfrac{1}{2k+2}$ the pull-back potential $\wt\eta= \pi^*\eta^{\xx(t)}_t$ on $\Sigma_{m,k}$ has apparent singularities at $\pi^{-1}(p_j), j = 0, ..., n-1.$ In other words, the minimal immersion corresponding to the DPW potential $\wt\eta$ extends smoothly to $p_j, j= 0, ..., n-1.$
\end{proposition}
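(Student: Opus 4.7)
My plan is to verify that each branch-point preimage $\hat p_j=\pi^{-1}(p_j)$ is an apparent singularity by constructing an explicit local gauge that makes the pull-back potential holomorphic there, and then checking that the resulting minimal immersion is insensitive to the multi-valuedness of this gauge.

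First I would introduce, near $\hat p_j$, a local holomorphic coordinate $w$ with $w^{k+1}=z-p_j$, which is possible since $\pi\colon\Sigma_{m,k}\to\C P^1$ is totally branched of order $k+1$ over $p_j$. Using $\frac{dz}{z-p_j}=(k+1)\frac{dw}{w}$ and the normalization $t=\frac{1}{2(k+1)}$, the pull-back $\pi^*\eta^{\xx(t)}_t$ has a simple pole at $w=0$ whose residue is exactly $\tfrac12 A_j$, of the shape \eqref{eq:28}. One should also check that the regular (polynomial in $z$) part of $\eta^{\xx(t)}_t$ pulls back to something holomorphic at $w=0$, which is immediate since $z$ is holomorphic in $w$.

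Next I would apply the explicit gauge $g$ given in \eqref{locgaugeeq}. The left factor is a constant-in-$w$ conjugation that puts the residue of the pullback into Jordan normal form, while the $\operatorname{diag}(w^{-1/2},w^{1/2})$ factor rescales this Jordan block to an apparent singularity. A direct calculation yields
\[\hat\eta \;=\; \pi^*\eta\,.\,g \;=\; \begin{pmatrix} 0 & \frac{(a_j-1)^2}{2\lambda b_j} \\[2pt] \frac{\lambda b_j(a_j^2+b_jc_j-1)}{(a_j-1)^2 w^2} & 0 \end{pmatrix} dw + O(w^{k-1})\,dw,\]
and the crucial determinant identity $-a_j^2-b_jc_j=-1$ from Proposition \ref{prop-monodromy-at-1} forces the $w^{-2}$ term in the lower-left entry to vanish. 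Thus $\hat\eta$ is holomorphic at $w=0$, which is precisely the definition of an apparent singularity. Moreover, since the $\lambda^{-1}$ coefficient of $\hat\eta_{12}$ does not vanish (because $b_j(0)\neq0$ by our initial condition), the extended immersion will be unbranched at $\hat p_j$, analogously to Remark \ref{remark-imm-infty}.

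The subtle point is that $g$ contains $\sqrt{w}$, so a priori it is only defined on a double cover of the $w$-disc. I would resolve this as follows: the gauged potential $\hat\eta$ is invariant under $g\mapsto -g$ and therefore single-valued on the $w$-disc itself. For the surface to extend, the Sym--Bobenko formula \eqref{symbob} must have trivial monodromy around $\hat p_j$; this is where the choice $t=\frac{1}{2k+2}$ enters decisively: by Proposition \ref{prop-monodromy-at-1}, $M_j(\pm1)^{k+1}=-\Id$, so the monodromy of $\nabla^{\pm1}$ around $\hat p_j$ (obtained from $(k+1)$ loops around $p_j$) is $-\Id$, and the product $F^{1}(F^{-1})^{-1}$ thus has trivial monodromy. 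I anticipate that the main technical obstacle is that $g$ is only defined for $\lambda$ in a small disc around $0$, so the standard Iwasawa decomposition of Theorem \ref{Iwasawa} is unavailable; I would circumvent this using the $r$-Iwasawa decomposition from \cite{SKKR,KRS} mentioned in the remark at the end of Section \ref{sec:DPW}, verifying that the unitary factor is already single-valued on the $w$-disc while the $-\Id$ monodromy of $g$ is absorbed into the positive factor.
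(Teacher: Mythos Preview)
Your proposal is correct and follows essentially the same route as the paper: the argument preceding the proposition in Section~\ref{sec:con} introduces the same local coordinate $w^{k+1}=z-p_j$, identifies the residue as $\tfrac12 A_j$ (Equation~\eqref{eq:28}), applies the identical gauge~\eqref{locgaugeeq}, and uses the determinant identity~\eqref{eq-abc} from Proposition~\ref{prop-monodromy-at-1} to kill the $w^{-2}$ term, with the same remarks about the double cover and the $r$-Iwasawa decomposition. Your treatment of the $-\Id$ monodromy at $\lambda=\pm1$ is slightly more explicit than the paper's footnote, but the content is the same.
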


\begin{theorem}\label{thm:new_surface?}For every $m\in\N^{\geq1}$ fixed, there is a $K\in\N$ such that                                    for every $k\geq K$ there exists an immersed compact minimal surface $f_{m,k}$ of genus $g=mk$ in $\S^3$.
Moreover, the symmetry group of $f_{m,k}$ contains $\Z_{m+1}\times\Z_{k+1}$.
\end{theorem}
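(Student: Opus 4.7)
The plan is to assemble the pieces already built up. First I choose $K$ so that for all $k\geq K$ the value $t=\tfrac{1}{2k+2}$ lies in the neighborhood of $0$ on which Proposition \ref{prop:IFT} produces a unique $\xx(t)$ making $\eta_t^{\xx(t)}$ solve the monodromy problem on $M=\C P^1\setminus\{p_0,\dots,p_{2m+1},\infty\}$. Proposition \ref{apparent} already tells us that $\infty$ is an apparent singularity. Now I pull this potential back along the $(k+1)$-fold cyclic cover $\pi\colon\Sigma_{m,k}\to\C P^1$ with $\Sigma_{m,k}$ defined by $y^{k+1}=(z^{m+1}-1)/(z^{m+1}+1)$. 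Riemann--Hurwitz, applied to $2m+2$ totally ramified branch points of index $k+1$, gives $g(\Sigma_{m,k})=mk$.

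Next I verify that $\wt\eta=\pi^*\eta_t^{\xx(t)}$ satisfies the full monodromy problem on $\Sigma_{m,k}$. Unitarity for $\lambda\in S^1$ is inherited from $M$: any loop in $\Sigma_{m,k}\setminus\pi^{-1}(\{p_j\})$ projects to a concatenation of the $\gamma_j$'s, and products of unitary monodromies are unitary. The new loops around the branch preimages $\hat p_j=\pi^{-1}(p_j)$ have monodromy $M_j(t)^{k+1}$ because of the ramification index $k+1$, and by Proposition \ref{prop-monodromy-at-1} the eigenvalues of $M_j(t)(\lambda)$ are $e^{\pm 2\pi\ii t}$, which for $t=\tfrac{1}{2k+2}$ raised to the $(k+1)$-th power give $e^{\pm\pi\ii}=-1$. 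Since $M_j(t)(\lambda)$ is diagonalizable for generic $\lambda$, this forces
$$M_j(t)(\lambda)^{k+1}=-\Id\qquad\text{for all }\lambda\in\S^1,$$
so the monodromy around $\hat p_j$ lies in $\{\pm\Id\}$. In particular the extrinsic closing condition at $\lambda=\pm 1$ is satisfied, and Proposition \ref{extension} guarantees that the singularities $\hat p_j$ are apparent. The apparent-singularity property at $\pi^{-1}(\infty)$ follows by pulling back the local gauge of Proposition \ref{apparent}. Hence the Sym--Bobenko construction produces a well-defined compact minimal immersion $f_{m,k}\colon\Sigma_{m,k}\to\S^3$ of genus $mk$.

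For the symmetry group: the Galois group of $\pi$ is $\Z_{k+1}$, and since $\wt\eta$ is a pull-back it is invariant under every deck transformation, so each descends to an orientation-preserving isometry of $f_{m,k}$. The rotation $\delta^2(z)=e^{2\pi\ii/(m+1)}z$ on $\C P^1$ has order $m+1$, lifts to $\Sigma_{m,k}$, and by the symmetry relations of Section \ref{sec:symm} acts on $\eta_t^{\xx(t)}$ by conjugation by the constant unitary $D^2$ while preserving $\lambda$, hence yields an isometry of $f_{m,k}$. The two actions commute, giving the asserted $\Z_{m+1}\times\Z_{k+1}$ inside the symmetry group.

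The main obstacle in the whole program is not this assembly but the preceding work: producing the symmetric potential with the correct blow-up (Proposition \ref{prop:IFT}) and then controlling its two types of singularities (Propositions \ref{apparent} and \ref{extension}). Once these are available, the present theorem reduces to bookkeeping organized around the arithmetic observation that $t=\tfrac{1}{2k+2}$ is precisely the quantization making the $(k+1)$-th-power monodromy equal $-\Id$ for all $\lambda$.
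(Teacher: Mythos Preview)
Your assembly follows the paper's route closely, but there are two genuine gaps.

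\textbf{Extrinsic closing is only checked on the wrong generators.} You verify that the small loops around the branch preimages $\hat p_j$ have monodromy $M_j(t)^{k+1}=-\Id$, and then conclude ``in particular the extrinsic closing condition at $\lambda=\pm 1$ is satisfied''. But these local loops do not generate $\pi_1(\Sigma_{m,k}\setminus S)$: the surface has genus $mk$, so there are many closed curves whose projection to $M$ is a word in the $\gamma_j$ that is \emph{not} a $(k+1)$-th power. For such a loop you must still show that the product of the corresponding $M_j(t)(\pm 1)$ lies in $\{\pm\Id\}$. This is exactly where the paper's comparison argument enters: a loop on $M$ lifts to $\Sigma_{m,k}$ if and only if its covering monodromy $\mu$ (Equation \eqref{eq:covmon}) is trivial, and one checks that the homomorphism $\gamma_j\mapsto M_j(t)(\pm 1)$ from Proposition \ref{prop-monodromy-at-1} factors through $\mu$ up to sign. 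Without this step, the Sym--Bobenko formula is not known to be single-valued on $\Sigma_{m,k}$.

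\textbf{Unbranchedness at $\pi^{-1}(\infty)$ is not addressed.} Proposition \ref{apparent} only says that the singularity at $z=\infty$ is apparent, i.e., the surface extends smoothly there; it does not by itself rule out a branch point. The paper closes this gap either by Remark \ref{remark-imm-infty} (the $\lambda^{-1}$-coefficient $\wh\beta$ of the gauged potential does not vanish at $\infty$) or by a Hopf-differential count: the pullback of $z^{m-1}(dz)^2/(z^{2m+2}-1)$ to $\Sigma_{m,k}$ has exactly $4g-4$ zeros, leaving no room for branching. You should include one of these arguments; otherwise you have only produced a possibly branched minimal map, not the immersed surface the theorem asserts.

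The remaining pieces---choice of $K$, Riemann--Hurwitz for the genus, the $\Z_{k+1}$ deck action, and the lift of $\delta^2$ giving the $\Z_{m+1}$ factor---are handled correctly and match the paper.
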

\begin{proof} 
Proposition \ref{prop:IFT} shows the existence of  DPW potentials $\eta_t=\eta_t^{\xx(t)}$ for $t\sim 0$. 
Thus let $K\in\N$ such that $\eta_t$ exist for all $t<\tfrac{1}{2K}.$ Fix an integer $k \geq K$
and consider $\wt \eta= \pi^*\left(\eta_{t} \right),$ $t=\tfrac{1}{2k+2}$ the pull-back DPW potential to $\Sigma_{m,k}$. \\

\noindent
Let $\Phi$ be the solution of $d_{\Sigma_{m,k}} \Phi = \Phi \wt\eta$, with initial condition $\Phi(\wt{0})=\Id$, where $\wt{0}\in\Sigma_{m,k}$ is a preimage of $z=0$ under $\pi.$ We claim that the Sym-Bobenko formula yields
a well-defined minimal immersion $f\colon \Sigma_{m,k}\to\S^3.$ \\

\noindent 
By 
Proposition \ref{prop:IFT} and Equation \eqref{eq:covmon}
the pull-back potential $\wt{\eta}$ satisfies the closing conditions on $\Sigma_{m,k}\setminus S$, where $S=\pi^{-1}\{z\mid z=\infty\text{ or } z=p_j, j = 0, ...n-1\}$. 
Indeed, the extrinsic closing condition follows from the construction of the covering $\Sigma_{m,k}\to\C P^1$:
A closed curve $\gamma$ on the $(2m+2)$-punctured sphere lifts to a closed curve $\hat\gamma$ in $\Sigma_{m,k}$ if and only if the monodromy
$\mu$  in \eqref{eq:covmon} of $\Sigma_{m,k}\to\C P^1$ along $\gamma$ is trivial. Comparing    $\mu$ 
with the monodromy representation of the potential $\eta_t$ at $\lambda=\pm1$ (see Proposition \ref{prop-monodromy-at-1}) we directly see that the monodromy of the potential $\eta_t$ at $\lambda=\pm1$ along a closed curve $\hat\gamma$ in $\Sigma_{m,k}\setminus S$ is $\pm\text{Id}.$
 We therefore obtain a well-defined minimal immersion

$$f\colon\Sigma_{m,k}\setminus S\longrightarrow\S^3.$$ 

\noindent By Proposition \ref{extension} the minimal immersion $f$ extends as an immersion through the branch points $p_j$ of $\pi$. Proposition \ref{apparent} shows that the surface also extends smoothly through the preimages $\pi^{-1}(\infty)$ and we obtain a well-defined map $f_{m,k}\colon\Sigma_{m,k}\to\S^3.$\\ 

\noindent
It remains to show that $f_{m,k}$ is immersed at $\pi^{-1}(\infty)$. 
This follows either by Remark \ref{remark-imm-infty} or from the following counting argument:
On a branched minimal surface of genus $g= mk$ the Hopf differential $Q$ has $4g-4-b$ zeros (counted with multiplicity), where $b$ is the number of branch points (counted with multiplicity). On the other hand, for $f_{m,k}$ the form of the DPW potential and \eqref{explicit-potential}
gives that $Q$ is a constant multiple of
\[\pi^*\frac{z^{m-1}(dz)^2}{z^{2m+2}-1}.\]
This gives 
\[(k-1)(2m+2)+(2k+2)(m-1)=4 km-4=4g-4\]
zeros of $Q$.  Thus $b=0$ and $f_{m,k}$ must an immersion. \\

\noindent
That the surface $f_{m,k}$ has a $\Z_{m+1}$ and a $\Z_{k+1}$ symmetry follows from the symmetries of the potential and by uniqueness of the Iwasawa decomposition.
The $\Z_{m+1}$-action rotates the tangent plane of $f(\wt{0})\in \S^3\subset\R^4$ and fixes its orthogonal complement, while the
$\Z_{k+1}$-action fixes the tangent plane of $f(\wt{0})\in \S^3\subset\R^4$ and rotates its orthogonal complement.
Hence, the $\Z_{m+1}$ and  $\Z_{k+1}$-actions commute.
\end{proof}
\begin{theorem}\label{the:area}
For $k\to\infty$, the asymptotic expansion of the area of the minimal surfaces $$f_{m,k}\colon\Sigma_{m,k}\longrightarrow \S^3$$   is given by
\[\Area(f_{m,k})=4\pi (m+1)\left(1-\frac{\kappa_m}{2(k+1)}+ O\left(\frac{1}{(k+1)^3}\right)\right),\]
with $\kappa_m$ as defined in  Proposition \ref{prop-derivatives}
\end{theorem}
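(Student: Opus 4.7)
The area of $f_{m,k}$ is evaluated via Corollary~\ref{cor:areares} applied to the pull-back potential $\wt\eta=\pi^*\eta_t^{\xx(t)}$ on $\Sigma_{m,k}$ at $t=\tfrac{1}{2(k+1)}$. The apparent singularities of $\wt\eta$ are the $n=2m+2$ branched preimages $\pi^{-1}(p_j)$ and the $k+1$ unbranched preimages of $z=\infty$. Since the $\delta$-, $\sigma$-, and $\Z_{k+1}$-deck symmetries act transitively on each of these two orbits, the total area reduces to
\[\Area(f_{m,k})\;=\;4\pi(2m+2)\,R_p(t)+4\pi(k+1)\,R_\infty(t),\]
where $R_p(t)$ and $R_\infty(t)$ are the residues at a single preimage of $p_0$ and of $\infty$, respectively. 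The identity $t(k+1)=\tfrac12$ is precisely what produces the factor $\tfrac12$ in \eqref{eq:28}: in the branched coordinate $w^{k+1}=z-1$, the residue of $\wt\eta$ at $w=0$ equals $\tfrac12 A_0(\lambda)\,dw/w$, independent of $k$. Consequently both $R_p$ and $R_\infty$ are smooth functions of $t$ alone, through the parameters $\xx(t)$.

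I would compute the two residues using the desingularizing gauges already constructed in the paper: the gauge $g(w,\lambda)$ of \eqref{locgaugeeq} at the preimage of $p_0$, and the gauge $G_s$ from the proof of Proposition~\ref{apparent} (together with Remark~\ref{remark-imm-infty}) at the preimage of $\infty$. In either case the trace in Corollary~\ref{cor:areares} becomes an explicit polynomial in the Fourier coefficients of $a,b,c,r$. At $t=0$ the initial values \eqref{eq-central} make $\Phi_0$ constant in $\lambda$, and a direct evaluation parallel to the spherical example following Corollary~\ref{cor:areares} yields the leading term $\Area(f_{m,k})\to 4\pi(m+1)$, matching the degeneration of $\xi_{m,k}$ into $m+1$ great $2$-spheres. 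Inserting the first-order derivatives of Proposition~\ref{prop-derivatives},
\[a'(0)=(1-\lambda^2)\kappa_m,\quad b'(0)=(\lambda-\lambda^3)\kappa_m,\quad c'(0)=r'(0)=0,\]
into the Taylor expansion of $R_p$ and $R_\infty$ then produces the linear correction $-4\pi(m+1)\kappa_m\,t=-\frac{\kappa_m}{2(k+1)}\cdot 4\pi(m+1)$, consistent with the claimed coefficient.

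The subtle point is the vanishing of the $1/(k+1)^2$-contribution. Proposition~\ref{prop-time-parity}, combined with the Sym--Bobenko formula, gives $f_{-t}=f_t^{-1}$ on the universal cover; since $A\mapsto A^{-1}$ is an isometry of $\S^3\cong\SU(2)$, the natural smooth extension of $A_k(t):=4\pi(2m+2)R_p(t)+4\pi(k+1)R_\infty(t)$ to all small $t$ is even for every fixed $k$. Matching polynomial coefficients in $k$ therefore forces $R_p(t)$ and $R_\infty(t)$ to be \emph{separately} even functions of $t$, so in the rewriting $\Area(f_{m,k})=4\pi(2m+2)R_p(t)+2\pi R_\infty(t)/t$ the odd function $R_\infty(t)/t$ contributes no $t^2$-term and the $t^2$-coefficient of the area equals $2\pi(2m+2)R_p''(0)$. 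The theorem thus reduces to the identity $R_p''(0)=0$, which I expect to be the main computational obstacle: it should follow from a second-order Taylor expansion of the residue at $w=0$, using the $\lambda$-parity constraints of Proposition~\ref{prop-time-parity} on $a''(0),b''(0),c''(0)$, the vanishing $r''(0)$, and the relation $\det A_0=-1$ recorded in \eqref{eq-abc}.
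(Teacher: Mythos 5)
Your overall route is the paper's: pass to the residue formula of Corollary~\ref{cor:areares}, compute the residues with the gauges \eqref{locgaugeeq} and $G_s$, get the linear term from Proposition~\ref{prop-derivatives} and kill the quadratic term with Proposition~\ref{prop-time-parity}. But the step you rely on to control the $(k+1)^{-2}$ term is wrong. The residue at a preimage of $p_j$ works out to $R_p(t)=2(k+1)t\,(1-a^0(t))=1-a^0(t)$ (with $t=\tfrac1{2k+2}$), so $R_p'(0)=-\kappa_m\neq 0$: $R_p$ is \emph{not} an even function of $t$, and if it were, the theorem's own first-order coefficient would vanish. The confusion is between two different extensions of the area to negative $t$: the geometric one, $\Area(f_{-t})=\Area(f_t)$ because $f_{-t}=f_t^{-1}$ and $A\mapsto A^{-1}$ is an ambient isometry, which is even but only of class $C^0$ (it behaves like $1-\kappa_m|t|$); and the analytic one produced by the residue formula, $1-a^0(t)$, which is smooth but not even. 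Your "natural smooth extension is even" premise conflates the two, so the deduction that $R_p$ and $R_\infty$ are separately even collapses, and with it the claim that $R_\infty(t)/t$ contributes no $t^2$-term. The correct mechanism, which is how the paper argues, is that Proposition~\ref{prop-time-parity} makes $a^0(t)$ an \emph{odd} function of $t$ (from $a(-t)(-\lambda)=-a(t)(\lambda)$ read off at $\lambda^0$), so $1-a^0(t)$ has no $t^2$-term; no second-order expansion of the residue and no separate proof of $R_p''(0)=0$ is needed.

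Two further gaps. First, the contribution of the $k+1$ points over $z=\infty$ enters the area multiplied by $k+1\sim\tfrac1{2t}$, so even a term $R_\infty(t)=O(t^2)$ would shift the \emph{linear} coefficient $\kappa_m$; you cannot leave $R_\infty$ to a parity argument — it must be computed, and the paper checks directly with the gauge $G_0$/$G_s$ that these residues vanish identically. Second, the gauge \eqref{locgaugeeq} involves $\sqrt{w}$ and has monodromy $-\Id$ around $\pi^{-1}(p_j)$, so Corollary~\ref{cor:areares} cannot be applied on $\Sigma_{m,k}$ itself; the paper passes to the double cover $\hat y^{2k+2}=(z^{m+1}-1)/(z^{m+1}+1)$ and divides the resulting area by $2$. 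This is a fixable technicality, but your decomposition $4\pi(2m+2)R_p+4\pi(k+1)R_\infty$ is not literally available as written.
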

\begin{proof}
Recall that the area of $f_{m,k}$ is given by a sum of residues, see Corollary \ref{cor:areares}.
The local gauges \eqref{locgaugeeq} have local monodromies $-\text{Id}$ around $\pi^{-1}(p_j)$ on $\Sigma_{m,k}$. Thus, we consider the double covering  $\hat\Sigma_{m,k}\to\Sigma_{m,k}$
defined by the $(2k+2)$-fold covering $\hat\pi$ of $\C P^1:$

\[\hat y^{2k+2}=\frac{z^{m+1}-1}{ z^{m+1}+1}.\]
Applying Corollary \ref{cor:areares} to the potential $\hat\pi^*\eta:=\hat\pi^*\eta^{\xx(t)}_{t}$ for $t=\tfrac{1}{2k+2},$ gives rise to  a minimal surface 
$$\tilde f_{m,k}: \hat\Sigma_{m,k}\longrightarrow\Sigma_{m,k}\to\S^3,$$
where $\tilde f_{m,k}$ is a double cover of $f_{m,k}.$
A direct computation (using the gauge $G_0$ in Section \ref{sec:reginf}) yields that there is no contribution of residues at the points over $z=\infty.$ We claim that at each preimage $\wh p_j$  of the branch points $p_j,$ $j= 0, ..., n-1$, the residue is

\begin{equation}\label{eq:rescom}\Res_{\wh p_j}\tr(\hat\pi^*\eta_{-1} g_{j,1}g_{j,0}^{-1})=1- a(t)|_{\lambda=0}\end{equation}
where $a(t)$ is provided by Proposition \ref{prop:IFT}, and the gauge is given by \eqref{locgaugeeq}.
Indeed, using \eqref{eq:28} and \eqref{locgaugeeq} and the coordinate $x=\sqrt{w}$ centered at $\wh p_j$
we have

\[\hat\pi^*\eta_{-1} =\matrix{0& b_j(0)\\0&0}\frac{dx}{x}+\text{ higher order terms in }x,\]

\[g_{j,1}=\matrix{\tfrac{a_j'(0)b_j(0)+b_j'(0)(1-a_j(0))}{(1-a_j(0))^2}\frac{1}{x}&0\\\frac{1}{x}&-\tfrac{a_j'(0)b_j(0)+b_j'(0)(1-a_j(0))}{b_j(0)^2}x}
\]
\noindent
and

\[g_{j,0}=\matrix{\tfrac{b_j(0)}{1-a_j(0)}\frac{1}{x}&0\\0& \tfrac{1-a_j(0)}{b_j(0)}x }
\]
which yields \eqref{eq:rescom}.
(In the above computations, $t$ is fixed and therefore omitted, and $a_j$, $b_j$ are seen as functions of $\lambda$.)
By Propositions \ref{prop-derivatives} and \ref{prop-time-parity},
$$1-a(t)|_{\lambda=0}=1-\kappa_m t+O(t^3).$$
By Corollary \ref{cor:areares}
\[2\,\Area(f_{m,k})=\Area(\wt{f}_{m,k})=4\pi (2m+2)(1-\kappa_m t+O(t^3)).\]
\end{proof}
\begin{remark}
Due to time parity (Proposition \ref{prop-time-parity}) the same minimal surface is obtained when choosing $t=\frac{-1}{2(k+1)}$, though the area computation differs in detail. Indeed, the residue in \eqref{eq:28} will have the opposite sign, so the gauge
$g_j$ in \eqref{locgaugeeq} needs to be altered turning the right hand side of
\eqref{eq:rescom} into 
$1+a(t)|_{\lambda=0}.$ This gives of course the same area for the surface, since $a(t)|_{\lambda=0}$ is odd in $t$.\end{remark}

\begin{theorem}
The  minimal surfaces $f_{m,k}\colon\Sigma_{m,k}\to \S^3$ coincide with the Lawson surfaces $\xi_{m,k}$ for $k \gg1.$ In particular, the asymptotic expansion of the area of the Lawson surfaces is given by Theorem \ref{the:area}.
\end{theorem}
\begin{proof} 
 Using the symmetries we first show that the geodesic polygon of the construction of the Lawson surface is contained in the surface  $f_{m,k}$:
By construction of the potential, $\eta_t$ admits the symmetry  $\sigma^*\eta_t=\bar\eta_t$ for $\sigma(z)=\bar z,$ see Section \ref{sec:symm}.
Analogously to \cite{He}  it can be shown (for the initial value $\Phi(0)=\text{Id}$) that the line 
$$\{z\in\R\subset \C\mid  |z|<1\}$$
 is mapped via $f=f_{m,k}$ to a geodesic in the 3-sphere.  The symmetry $\delta\circ\delta$ of the surface is induced by rotational symmetry \[x\in\S^3=\SU(2)\longmapsto D^{-2}xD^2\]
 of the 3-sphere. Its fix point set is the circle
 \[C_1=\left\{\matrix{w&0\\0&\bar w}\mid w\in\S^1\subset\C\right\}.\]
 The induced symmetry $\delta\circ\delta\circ\sigma$ on the $z$-plane fixes the line 
 \[\{z\in\C\mid \text{arg}(z)=\tfrac{\pi }{m+1},|z|<1\}\]
 and therefore $f$ also maps this line to a geodesic in the 3-sphere using the same arguments as above. The analytic continuation of these two geodesics on the abstract Riemannian surface extend as geodesics in 3-space (contained in the surface) through the points $f(1)$ respectively $f(\exp{\tfrac{\pi i}{m+1}}).$
 By construction of the surface via \eqref{symbob} and Proposition \ref{prop-monodromy-at-1}, there is a rotational symmetry (induced from the monodromy around $z=1$) of the surface
 which is given by
 \[x\in\S^3=\SU(2)\longmapsto \matrix{e^{2\pi\ii t}&0\\0&e^{-2\pi\ii t}}x\matrix{e^{2\pi\ii t}&0\\0&e^{-2\pi\ii t}}\]
 with $t=\tfrac{1}{2k+2}$. The fix point set of this rotation is the circle $C_2$ orthogonal to the circle $C_1$. Clearly,
 $f(1)$ is a fixed point  of the rotational symmetry, and hence lies on $C_2.$ Analogously, we find that $f(\exp{\tfrac{\pi i}{m+1}})\in C_2$ by applying the symmetry induced from the monodromy around $z=\exp{\tfrac{\pi i}{m+1}}.$ Applying the
rotational symmetry  at the point $f(1)$ together with the reflection symmetry across the geodesic in 3-space
which contains $\{f(x)\mid x\in\R,|x|<1\}$, we easily deduce that also $\{f(x)\mid x\in\R,x>1\}$ is a geodesic in the 3-sphere.
Analogously, $f$ maps $\{z\in\C\mid \text{arg}(z)=\tfrac{\pi }{m+1},|z|>1\}$ to a geodesic. Finally, being a fix point of the rotational symmetry $\delta\circ\delta$, $f$ maps (points over) $z=\infty$ to a point on $C_1.$ The angles between
the four different geodesics joining $f(0)$ and $f(1)$, $f(1)$ and $f(\infty)$, $f(\infty)$ and $f(\exp{\tfrac{\pi i}{m+1}}),$ $f(\exp{\tfrac{\pi i}{m+1}})$ and $f(0),$ respectively, must be the same as the angles in the geodesic polygon of the Lawson surface by the very form of the symmetries.
 From these observations we obtain that
$f_{m,k}$ maps the boundary of the
sector 
$$Se=\{z\in\C\mid 0\leq\text{arg}(z)\leq \tfrac{\pi }{m+1}\}$$
 to the geodesic polygon $\Gamma$  in the construction of a Lawson surface.\\

\noindent
We want to prove that $f_{m,k}(Se)$ is contained in a hemisphere for $k$ large enough.
The contour $\Gamma$ is contained in a ball $B(p,r)\subset\S^3$ of radius $r<\frac{\pi}{2}$.
Assume by contradiction that there exists a point $q\in S$ such that
$d(q,p)\geq \frac{\pi}{2}$. Then $d(q,\Gamma)\geq \frac{\pi}{2}-r$.
By the Monotonicity Formula for minimal surfaces, the area of $S$ is greater than
$c(\frac{\pi}{2}-r)^2$ for some universal constant $c$. But we know that the area of $S$
is equal to $\frac{1}{n(k+1)}$ of the area of $f_{m,k}(\Sigma)$, so is less than $\frac{2\pi}{k+1}$. Hence for $k$ large enough, $S$ is included in the hemisphere
$B(p,\frac{\pi}{2})$. Then the solution of the Plateau Problem is unique by a standard
application of the maximum principle (see \cite[Theorem 4.1]{KapWiy}).
Hence $f_{m,k}(\Sigma)$ is the Lawson surface $\xi_{m,k}$ for $k\gg1$.
\end{proof}

\appendix\label{app}
\section{On removable singularities}
\noindent
The DPW method can also be applied to obtain CMC surfaces from $\Sigma$ into 3-dimensional space forms. 
In this section, we want to give sufficient conditions for a singularity of a DPW potential to be apparent in this more general setup. Be aware of the slightly differing notations in this section. \\

\noindent The Monodromy Problem associated to the general Sym-Bobenko formula at
Sym-points $\lambda_1$ and $\lambda_2 \in\C_*$ is: If $\Phi$ is a solution of $d_\Sigma \Phi=\Phi\eta$
and $M(\lambda)$ is the monodromy of $\Phi$, then

\begin{equation}
\label{general-monodromy-pb}
\mbox{ if $\lambda_2\neq \lambda_1$:}
\left\{\begin{array}{l}
M\in\Lambda SU(2)\\
M(\lambda_1)=\Id_2\\
M(\lambda_2)=\Id_2
\end{array}\right.
\qquad\qquad\mbox{ if $\lambda_2=\lambda_1$:}
\left\{\begin{array}{l}
M\in\Lambda SU(2)\\
M(\lambda_1)=\Id_2\\
M'(\lambda_1)=0\,.
\end{array}\right.
\end{equation}
\begin{example}$\;$\\

\begin{itemize}
\item $(\lambda_1,\lambda_2)=(1,1)$ produces surfaces in $\R^3$ with $H\equiv 1$,
\item $(\lambda_1,\lambda_2)=(e^{\ii\alpha},-e^{-\ii\alpha})$ produces surfaces
in $\S^3$ with $H\equiv \tan\alpha$,
\item $(\lambda_1,\lambda_2)=(e^q,e^{-q})$ produces surfaces in $\H^3$
with $H\equiv \coth q$.
\end{itemize}
\end{example}
\noindent
We assume that the Sym-points are chosen so that

$$\lambda_1+\lambda_2\in e^{\ii\theta}\R\quad\mbox{ and }\quad
\lambda_1\lambda_2=e^{2\ii\theta}$$
for some $e^{\ii\theta}\in\S^1$.
For the above examples, $e^{\ii\theta}$ is respectively $1$, $\ii$ and $-1$.
This ensures that

\begin{equation}
\label{eq-realquotient}
\frac{(\lambda-\lambda_1)(\lambda-\lambda_2)}{\lambda e^{i\theta}}\in\R, \qquad  \forall \lambda\in\S^1.
\end{equation}
Moreover,  we fix $\rho>1$ such that both Sym-points satisfy $|\lambda_1|< \rho$, $|\lambda_2|<\rho$.
\medskip

\noindent

\begin{theorem}
\label{thm:appendix}
Fix an integer $m\geq 1$.
For $t\in(-\varepsilon,\varepsilon)$ let $\eta_t$ be a family of DPW potentials on $D^*(0,r)$ and $\Phi_t$ a family of
solutions of $d_{D^*(0,r)}\Phi_t=\Phi_t\eta_t$ on its universal cover, with
 $C^1$-dependence on $(t,z)$
 as maps into $\Lambda\sl(2,\C)_{\rho}$
and $\Lambda SL(2,\C)_{\rho}$,  respectively.
Assume
\begin{enumerate}
\item $\eta_t$ has a pole of order at most $2m+1$ at $z=0$ with principal part

$$\eta_t(z,\lambda)=\matrix{0&\lambda^{-1}\\0&0}\left(\frac{a_t(\lambda)}{z^{2m+1}}+\frac{b_t(\lambda)}{z^{m+1}}+\frac{c_t(\lambda)}{z}\right)dz+\Xi_t(z,\lambda)$$
where $\Xi_t$ is holomorphic with respect to $z$ in $D(0,r)$.
\item $$\eta_0=\matrix{0&0\\m z^{m-1}dz&0}.$$
\item $\Phi_t$ solves the Monodromy Problem \eqref{general-monodromy-pb}.
\item $a_t^0= \Re(e^{-\ii\theta}b_t^0)=0$.
\end{enumerate}
Then  $a_t\equiv b_t\equiv c_t\equiv 0$, for $t$ small enough. In particular,  $\eta_t$ is holomorphic at $z=0$.
\end{theorem}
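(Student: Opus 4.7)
I would follow the Implicit Function Theorem strategy used in Section~\ref{sec:implicit}. Enlarge the setup by treating $(a,b,c)\in(\mathcal W_\rho^{\geq 0})^3$ as free parameters (keeping $\Xi_t$ as given in the hypothesis), and package the Monodromy Problem~\eqref{general-monodromy-pb} together with the two normalizing functionals $a^0$ and $\Re(e^{-\ii\theta}b^0)$ into a single smooth map $\mathcal F(t,a,b,c)$ into an appropriate Banach target, exactly as $(\mathcal F,\mathcal G,\mathcal H_1,\mathcal H_2,\mathcal K)$ is set up in Section~\ref{sec:implicit}. The crucial initial observation is that $\mathcal F(t,0,0,0)=0$ for every $|t|<\varepsilon$: with $(a,b,c)=0$ the potential reduces to the holomorphic $\Xi_t$ on the disc $D(0,r)$, so the monodromy around $z=0$ is automatically trivial and all components of $\mathcal F$ vanish.

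Next I would compute the partial derivative $D_{(a,b,c)}\mathcal F$ at $(t,a,b,c)=(0,0,0,0)$ via the standard variational formula
\[\delta M=\int_{\gamma}\Phi_0\,\delta\eta\,\Phi_0^{-1}=2\pi\ii\,\Res_{z=0}\bigl(\Phi_0\,\delta\eta\,\Phi_0^{-1}\bigr),\qquad \Phi_0(z)=\minimatrix{1 & 0 \\ z^m-z_0^m & 1},\]
with $z_0\neq 0$ a chosen base point. A direct residue calculation parallel to the one leading to~\eqref{term2} yields $\delta M=2\pi\ii\,\lambda^{-1}X$, where the entries of $X$ are triangular linear combinations of $\delta a,\delta b,\delta c$. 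Imposing the linearized unitarity $\delta M+\delta M^*=0$ then forces, by a Fourier-mode comparison analogous to~\eqref{dFdG+}, each of $\delta a,\delta b,\delta c$ to be a polynomial in $\lambda$ of degree at most $2$ with explicit reflection/reality relations between its coefficients; the pole order ``at most $2m+1$'' from hypothesis~(1) is precisely what produces the $\lambda^{-1}$ prefactor that makes this reduction possible. Applying the Sym-point conditions $\delta M(\lambda_i)=0$ (replaced by $\delta M(\lambda_1)=\delta M'(\lambda_1)=0$ in the confluent case $\lambda_1=\lambda_2$) then imposes on each of the three polynomials two evaluation, respectively a second-order vanishing, constraints -- compatible with the reflection relations thanks to hypothesis~\eqref{eq-realquotient}. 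Combined with the normalizations $\delta a^0=0$ and $\Re(e^{-\ii\theta}\delta b^0)=0$ from hypothesis~(4), a short linear-algebra verification (easily done in each of the three cases $\S^3$, $\R^3$, $\H^3$) yields $\delta a=\delta b=\delta c=0$.

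Granted the resulting invertibility of the linearization (with the degrees of freedom matching exactly as in the paper's main IFT application), the Implicit Function Theorem provides, for each small $t$, a unique branch of solutions of $\mathcal F(t,\cdot)=0$ through the origin. Since $(a,b,c)\equiv 0$ is such a branch, it is the only one. The given $(a_t,b_t,c_t)$ satisfies $\mathcal F(t,a_t,b_t,c_t)=0$ by hypothesis~(3), and $(a_0,b_0,c_0)=0$ by hypothesis~(2); the $C^1$-continuity in $t$ built into the hypothesis then keeps $(a_t,b_t,c_t)$ in the IFT uniqueness neighborhood for $t$ small, so $a_t=b_t=c_t=0$ as claimed, and $\eta_t$ is holomorphic at $z=0$. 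The main obstacle I anticipate is verifying the surjectivity of the linearization in the appropriate Fourier-truncated Banach spaces; this is routine but intricate bookkeeping in the style of~\eqref{dFdG}--\eqref{eq-NG}, complicated mildly by the need to separate the confluent Sym-point case $\lambda_1=\lambda_2$ from the generic one.
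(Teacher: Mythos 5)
Your overall strategy is exactly the one the paper uses: promote $(a,b,c)$ to free parameters in $(\cal{W}^{\geq 0})^3$, observe that $(a,b,c)\equiv 0$ solves the Monodromy Problem together with the normalizations for \emph{every} $t$ (since the potential is then the holomorphic $\Xi_t$), prove the linearization in $(a,b,c)$ at $(0,0)$ is an isomorphism, and conclude by uniqueness in the Implicit Function Theorem. The intermediate picture you describe (unitarity kills the high Fourier modes, leaving a degree-two polynomial in $\lambda$ per parameter whose nine coefficients are killed by the Sym-point conditions and the two normalizations) is also the correct one, and corresponds to the paper's decomposition $q=q(\lambda_1)+(\lambda-\lambda_1)\wt{q}$, $\wt{q}=\wt{q}(\lambda_2)+(\lambda-\lambda_2)\wh{q}$ and its automorphism of $(\cal{W}^{>0})^3\times\C^9$.

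There is, however, one genuine gap: you take $\Phi_0(z)=\minimatrix{1&0\\z^m-z_0^m&1}$, i.e.\ you silently assume $\Phi_0(\wt{z}_0)=\Id$. The hypotheses only give $\Phi_0(z,\lambda)=V(\lambda)\minimatrix{1&0\\z^m&1}$ for some loop $V\in\Lambda SL(2,\C)_\rho$, and you are \emph{not} free to normalize $V$ to the identity: replacing $\Phi_t$ by $C^{-1}\Phi_t$ conjugates the monodromy by $C$, which preserves the condition $M\in\Lambda SU(2)$ only when $C$ is unitary. Consequently the linearized monodromy is $\frac{2\pi\ii}{\lambda}V\minimatrix{-db&dc\\-da&db}V^{-1}$, and your ``Fourier-mode comparison'' must be carried out on the conjugated matrix; for a general loop $V$ the conjugation does not respect the splitting $\cal{W}^{\geq 0}\oplus\cal{W}^{<0}$, and the triangular structure your argument relies on breaks down. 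The paper repairs this by Iwasawa-decomposing $V=FB$ and replacing $\Phi_t$ by $F^{-1}\Phi_t$, so that one may assume $V\in\Lambda_+^{\R}SL(2,\C)_\rho$, and then by passing to the auxiliary variables $\minimatrix{-q&r\\-p&q}=V\minimatrix{-b&c\\-a&b}V^{-1}$, which is an automorphism of $(\cal{W}^{\geq 0})^3$ precisely because the entries of $V$ then lie in $\cal{W}^{\geq 0}$; one must also check (as the paper does) that the normalizations $a^0=0$, $\Re(e^{-\ii\theta}b^0)=0$ translate into $p^0=0$, $\Re(e^{-\ii\theta}q^0)=0$. With this normalization and change of variables inserted, the rest of your outline goes through as planned.
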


\noindent
\begin{proof} We can write

$$\Phi_0(z,\lambda)=V(\lambda)\matrix{1&0\\z^m&1}\quad\mbox{ with } V\in\Lambda SL(2,\C).$$
Let $(F,B)$ be the Iwasawa decomposition of $V$.
By Theorem 5 in \cite{minoids} we have $F\in\Lambda SU(2)_{\rho}$ and
$B\in\Lambda_+^{\R} SL(2,\C)_{\rho}$.
Replacing $\Phi_t$ by $F^{-1}\Phi_t$ for all $t$,
we can assume without loss of generality that $V\in\Lambda_+^{\R}SL(2,\C)_{\rho}$
(which does not change the monodromy properties of $\Phi_t$).
\medskip

\noindent
For $\xx=(a,b,c)\in(\cal{W}^{\geq 0})^3$, let $\eta_{t}^{\xx}$ be the potential in $D^*(0,r)$ defined
by

$$\eta_{t}^{\xx}(z,\lambda)=\matrix{0&\lambda^{-1}\\0&0}\left(\frac{a}{z^{2m+1}}+\frac{b}{z^{m+1}}+\frac{c}{z}\right)dz+\Xi_t(z,\lambda).$$
Let $\Phi_{t}^{\xx}$ be the solution of $d\Phi_{t}^{\xx}=\Phi_{t}^{\xx}\eta_{t}^{\xx}$ on the universal cover
with initial condition $\Phi_{t}^{\xx}(\wt{z}_0,\lambda)=\Phi_t(\wt{z}_0,\lambda)$.
We consider the problem of finding $\eta_t^\xx$ such that 

\begin{equation}
\label{pb-appendix}
\left\{\begin{array}{l}
\mbox{$\Phi_{t}^{\xx}$ solves the Monodromy Problem \eqref{general-monodromy-pb}}\\
a^0=0\\
\Re(e^{-\ii\theta}b^0)=0\,.\end{array}\right.\end{equation}
Writing $\xx_t=(a_t,b_t,c_t)$,
we have $\eta_t=\eta_{t}^{\xx_t}$ and $\Phi_t=\Phi_{t}^{\xx_t}$.
We want to apply an Implicit Function argument to show that
for $(t,\xx)$ in a neighbourhood of $(0,0)$, solving Problem \eqref{pb-appendix} is equivalent to $\xx_t\equiv 0$, from which Theorem \ref{thm:appendix} follows.\\

\noindent
Fix a base point $z_0\in D^*(0,r)$. Let $\wt{z}_0$ be a lift of $z_0$ to the universal cover $\wt{D^*}(0,r)$ and $\gamma$ be a generator of $\pi_1(D^*(0,r),z_0)$. Let $M(t,\xx)$ be the monodromy of $\Phi_{t}^{\xx}$ with respect
to $\gamma$. Then the following Lemma holds.\\

\begin{lemma}\label{appendix-lemma1}
$\;$ \\
\begin{enumerate}
\item $(t,\xx)\mapsto M(t,\xx)$ is a $C^1$ map from $(-\epsilon,\epsilon)\times(\cal{W}^{\geq 0})^3$ to
$\Lambda SL(2,C)_{\rho}$.
\item For all $t\in (-\epsilon,\epsilon)$, $M(t,0)=\Id_2$.
\item The partial derivative of $M$ with respect to $\xx$ at $(0,0)$ is given by
$$d_{\xx}M=\frac{2\pi\ii}{\lambda}V\matrix{-db&dc\\-da&db}V^{-1}.$$
\end{enumerate}
\end{lemma}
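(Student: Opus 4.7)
The plan has three parts matching the three items of the lemma, with (1) and (2) being essentially formal and (3) carrying the content.

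For part (1), the map $(t,\xx)\mapsto \eta_t^\xx$ is affine in $\xx$ and $C^1$ in $t$, with values in the space of $\Lambda\sl(2,\C)_\rho$-valued $1$-forms on $D^*(0,r)$. I would invoke the standard Banach-space ODE theorem for loop-algebra-valued equations (as used throughout \cite{nnoids,minoids}) applied to $d\Phi_t^\xx=\Phi_t^\xx\eta_t^\xx$ along a fixed lift of $\gamma$: solutions depend $C^1$ on the parameters, and evaluating at the endpoint yields the $C^1$ regularity of $M$ into $\Lambda SL(2,\C)_\rho$. For part (2), when $\xx=0$ the potential reduces to $\eta_t^0=\Xi_t$, which is holomorphic on the whole disc $D(0,r)$. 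The corresponding ODE therefore admits a single-valued solution on $D(0,r)$ itself, so $M(t,0)=\Id_2$.

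The real content is part (3). The key input is the standard variation-of-monodromy formula
$$d_\xx M \;=\; \int_\gamma \Phi_0\,(d_\xx\eta)\,\Phi_0^{-1},$$
obtained (as in Proposition~8 of \cite{nnoids}) by differentiating the ODE in $\xx$, solving the resulting inhomogeneous equation by variation of constants, and evaluating at the endpoint of $\gamma$. At $t=\xx=0$ one has
$$\Phi_0(z,\lambda)=V(\lambda)\matrix{1 & 0 \\ z^m & 1}, \qquad d_\xx\eta=\lambda^{-1}\matrix{0 & 1 \\ 0 & 0}\!\left(\frac{da}{z^{2m+1}}+\frac{db}{z^{m+1}}+\frac{dc}{z}\right)dz,$$
together with the direct computation
$$\matrix{1 & 0 \\ z^m & 1}\matrix{0 & 1 \\ 0 & 0}\matrix{1 & 0 \\ -z^m & 1}=\matrix{-z^m & 1 \\ -z^{2m} & z^m}.$$
Each matrix entry of the integrand is therefore a Laurent polynomial in $z$, and the residue theorem selects exactly three contributions: the $(1,2)$-entry picks up $2\pi\ii\,dc$, the diagonal entries pick up $\mp 2\pi\ii\,db$, and the $(2,1)$-entry picks up $-2\pi\ii\,da$. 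Factoring out gives
$$d_\xx M=\frac{2\pi\ii}{\lambda}\,V\matrix{-db & dc \\ -da & db}V^{-1}.$$

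The only delicate point is justifying the variation-of-monodromy formula in the Banach loop-algebra setting so that the resulting object genuinely lives in $\Lambda SL(2,\C)_\rho$ with the required convergence radius. This is precisely the loop-group analogue of the classical argument, already handled in \cite{nnoids}; once it is invoked, the rest of the proof is the elementary $3\times 3$ residue calculation above.
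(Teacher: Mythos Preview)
Your proof is correct and follows essentially the same route as the paper: both invoke standard ODE regularity for (1), holomorphicity of $\eta_t^{\xx=0}$ for (2), and for (3) the variation-of-monodromy formula from \cite{nnoids} followed by the same residue computation of $\Phi_0(d_\xx\eta)\Phi_0^{-1}$. The only cosmetic difference is that the paper explicitly factors $\Phi_t^\xx=\Phi_t(\wt{z}_0)\Psi_t^\xx$ with $\Psi_t^\xx(\wt{z}_0)=\Id_2$ before applying Proposition~8 of \cite{nnoids}, whereas you invoke the integral formula directly; since $M(0,0)=\Id_2$ and the initial condition is independent of $\xx$, this amounts to the same thing.
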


\noindent
\begin{proof}

\begin{enumerate}
\item Point 1 follows from standard ODE theory.
\item Point 2 follows from the fact that $\eta_{t}^{\xx=0}$ is holomorphic in $D(0,r)$.
\item Let $\Psi_{t}^{\xx}$ be the solution of $d\Psi_{t}^{\xx}=\Psi_{t}^{\xx}\eta_{t}^{\xx}$ in the universal cover with initial condition
$\Psi_{t}^{\xx}(\wt{z}_0)=\Id_2$. Then $\Phi_{t}^{\xx}=\Phi_t(\wt{z}_0)\Psi_{t}^{\xx}$ and thus
$$M(t,\xx)=\Phi_t(\wt{z}_0)\cal{M}_{\gamma}(\Psi_t^\xx)\Phi_t(\wt{z}_0)^{-1}.$$
\noindent
By Proposition 8 in \cite{nnoids}, the partial derivative of $\cal{M}_{\gamma}(\Psi_t^\xx)$
with respect to $\xx$ at $(t,\xx)=(0,0)$ is given by

$$d_{\xx}\cal{M}_{\gamma}(\Psi_t^\xx)=
\int_{\gamma}\Psi_0^0d_{\xx}\eta_t^\xx(\Psi_0^0)^{-1}.$$
\noindent
Hence since $\cal{M}_{\gamma}(\Psi_0^0)=\Id_2$:

\begin{eqnarray*}
d_{\xx}M
&=&\Phi_0(\wt{z}_0)d_{\xx}\cal{M}_{\gamma}(\Psi_t^\xx)\Phi_0(\wt{z}_0)^{-1}\\
&=&\int_{\gamma}\Phi_0d_{\xx}\eta_t^\xx\Phi_0^{-1}\\
&=&\int_{\gamma}V\matrix{1&0\\z^m&1}\matrix{0&\lambda^{-1}\\0&0}\left(\frac{da}{z^{2m+1}}+\frac{db}{z^{m+1}}+\frac{dc}{z}\right)
\matrix{1&0\\-z^m&1}V^{-1}\\
&=&2\pi\ii\, V\Res_0\matrix{-z^m&1\\-z^{2m}&z^m}\left(\frac{da}{z^{2m+1}}+\frac{db}{z^{m+1}}+\frac{dc}{z}\right)V^{-1}\\
&=&\frac{2\pi\ii}{\lambda}V\matrix{-db&dc\\-da&db}V^{-1}.
\end{eqnarray*}
\end{enumerate}\end{proof}

\noindent
We define for $(t,\xx)$ in a neighbourhood of $(0,0)$:
\begin{equation}
\begin{split}
\wt{M}(t,\xx)(\lambda)&=\frac{1}{\lambda-\lambda_1}\big(\log M(t,\xx)(\lambda)-\log M(t,\xx)(\lambda_1)\big)\quad\mbox{ if $\lambda\neq\lambda_1$}\\
\wh{M}(t,\xx)(\lambda)&=\frac{\lambda e^{\ii\theta}}{\lambda-\lambda_2}\big(\wt{M}(t,\xx)(\lambda)-\wt{M}(t,\xx)(\lambda_2)\big)\quad\mbox{ if $\lambda\neq\lambda_1,\lambda_2$}.
\end{split}
\end{equation}

\noindent
Then $\wt{M}(t,\xx)$ extends holomorphically to $\lambda=\lambda_1$ and
$\wh{M}(t,\xx)$ extends holomorphically to $\lambda=\lambda_1,\lambda_2$. Moreover,
by Proposition 5 in \cite{nodes}
$\wt{M}$ and $\wh{M}$ are smooth maps taking values in $\Lambda\sl(2,\C)_{\rho}$.

\begin{lemma}
\label{appendix-lemma2}
The Monodromy Problem \eqref{general-monodromy-pb} for $\Phi_{t}^{\xx}$ is equivalent to:
\begin{equation}
\label{general-monodromy-pb2}
\left\{\begin{array}{l}
\wh{M}(t,\xx)\in\Lambda\su(2)\\
M(t,\xx)(\lambda_1)=Id_2\\
\wt{M}(t,\xx)(\lambda_2)=0.\end{array}\right.
\end{equation}
\end{lemma}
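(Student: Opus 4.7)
The plan is to translate both systems into conditions on $\log M(t,\xx)(\lambda)$. First I would note that for $(t,\xx)$ near $(0,0)$ the monodromy $M(t,\xx)$ is close to the constant loop $\Id_2$, so the principal branch of matrix logarithm is well defined and $\log M(t,\xx)\in\Lambda\sl(2,\C)_\rho$; moreover $\exp\colon\su(2)\to SU(2)$ is a local diffeomorphism near $0$, so for $\lambda\in\S^1$ the condition $M(\lambda)\in SU(2)$ is equivalent to $\log M(\lambda)\in\su(2)$.

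Next I would assume $M(\lambda_1)=\Id_2$, under which $\log M(\lambda_1)=0$ and the definition collapses to $\wt M(\lambda)=\log M(\lambda)/(\lambda-\lambda_1)$. To compare $\wt M(\lambda_2)=0$ with the Sym condition at $\lambda_2$, I would distinguish two cases. When $\lambda_1\neq\lambda_2$, division by the nonzero scalar $\lambda_2-\lambda_1$ gives $\wt M(\lambda_2)=0\iff\log M(\lambda_2)=0\iff M(\lambda_2)=\Id_2$. When $\lambda_2=\lambda_1$, the quantity $\wt M(\lambda_1)$ is the value at $\lambda_1$ of the holomorphic (removable-singularity) extension, equal to the derivative $(\log\circ M)'(\lambda_1)$; since $M(\lambda_1)=\Id_2$ and $(d\log)_{\Id_2}=\mathrm{id}$, this equals $M'(\lambda_1)$. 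Hence $\wt M(\lambda_1)=0\iff M'(\lambda_1)=0$, which is precisely the Sym condition in the coincident case.

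Finally, I would verify the unitarity equivalence: under both fixed-point conditions just handled, the definition of $\wh M$ collapses to $\wh M(\lambda)=s(\lambda)\log M(\lambda)$ with $s(\lambda):=\lambda e^{\ii\theta}/[(\lambda-\lambda_1)(\lambda-\lambda_2)]$. For $\lambda\in\S^1\setminus\{\lambda_1,\lambda_2\}$, the hypothesis \eqref{eq-realquotient} gives $s(\lambda)\in\R^{\ast}$, so scalar multiplication by $s(\lambda)$ is an $\R$-linear automorphism preserving $\su(2)\subset\sl(2,\C)$; hence $\wh M(\lambda)\in\su(2)\iff\log M(\lambda)\in\su(2)\iff M(\lambda)\in SU(2)$ on $\S^1\setminus\{\lambda_1,\lambda_2\}$, and by continuity of both $\wh M$ and $M$ this extends to all of $\S^1$. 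The main subtlety will be the identification $\wt M(\lambda_1)=M'(\lambda_1)$ in the coincident case via the chain rule at the identity; everything else is elementary bookkeeping with scalar factors.
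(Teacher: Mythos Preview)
Your proposal is correct and follows essentially the same route as the paper's proof: both reduce the comparison to showing that $\wh{M}(\lambda)=\frac{\lambda e^{\ii\theta}}{(\lambda-\lambda_1)(\lambda-\lambda_2)}\log M(\lambda)$ under the two Sym-point conditions, and then invoke the reality of the scalar factor from \eqref{eq-realquotient}. You are simply more explicit than the paper about why $\log M$ is defined, about the chain-rule identification $\wt{M}(\lambda_1)=M'(\lambda_1)$ when $\lambda_1=\lambda_2$, and about the continuity extension across Sym points lying on $\S^1$; these are exactly the points the paper leaves implicit.
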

\noindent
\begin{proof} The second equation in \eqref{general-monodromy-pb} and \eqref{general-monodromy-pb2} are the same.
The third one of \eqref{general-monodromy-pb} and \eqref{general-monodromy-pb2} are equivalent: While for $\lambda_2\neq\lambda_1$ the equation is the same, we use
for $\lambda_1=\lambda_2$ that

$$\wt{M}(t,\xx)(\lambda_1)=\frac{\partial}{\partial\lambda}M(t,\xx)(\lambda)|_{\lambda=\lambda_1}.$$
As for the first equation of \eqref{general-monodromy-pb} and \eqref{general-monodromy-pb2},  

$$\wh{M}(t,\xx)(\lambda)=\frac{\lambda e^{\ii\theta}}{(\lambda-\lambda_1)(\lambda-\lambda_2)}\log M(t,\xx)(\lambda).$$
Thus Lemma \ref{appendix-lemma2} follows from Equation \eqref{eq-realquotient}.
\end{proof}

\noindent
We introduce the auxiliary variables $(p,q,r)$ defined as functions of $(a,b,c)$ by

$$\matrix{-q&r\\-p&q}=V\matrix{-b&c\\-a&b}V^{-1}.$$
This change of variables is an automorphism of $(\cal{W}^{\geq 0})^3$ because
$V\in\Lambda_+^{\R}SL(2,\C)_{\rho}$ and hence  its entries are in $\cal{W}^{\geq 0}$.
Then

\begin{equation}
\label{eq-dxM}
d_{\xx}M=\frac{2\pi\ii}{\lambda}\matrix{-dq&dr\\-dp&dq}.
\end{equation}
Writing $V(0)=\minimatrix{\mu&\nu\\0&\mu^{-1}}$ with $\mu>0$ and $\nu\in\C$, we have

$$\matrix{-q^0&r^0\\-p^0&q^0}=
\matrix{-b^0-\frac{\nu}{\mu}a^0&\nu^2a^0+2\mu\nu b^0+\mu^2c^0\\-\frac{1}{\mu^2}a^0&b^0+\frac{\nu}{\mu}a^0}.$$
Therefore,

\begin{equation}
\label{eq-aequivp}
\left\{\begin{array}{l}
a^0=0\\
\Re(e^{-\ii\theta}b^0)=0\end{array}\right.\Leftrightarrow
\left\{\begin{array}{l}
p^0=0\\
\Re(e^{-\ii\theta}q^0)=0\,.\end{array}\right.\end{equation}
We decompose the parameter $q\in\cal{W}^{\geq 0}$ into

$$q(\lambda)=q(\lambda_1)+(\lambda-\lambda_1)\wt{q}(\lambda)\quad
\mbox{with $\wt{q}\in\cal{W}^{\geq 0}$}.$$
Then we further decompose  $\wt{q}$ into

$$\wt{q}(\lambda)=\wt{q}(\lambda_2)+(\lambda-\lambda_2)\wh{q}(\lambda)
\quad \mbox{with $\wh{q}\in\cal{W}^{\geq 0}$}.$$
This gives by Equation \eqref{eq-dxM}:

\begin{eqnarray*}
d_{\xx}\wt{M}_{11}&=&\frac{2\pi\ii}{\lambda-\lambda_1}(d_{\xx}M_{11}(\lambda)-d_{\xx}M_{11}(\lambda_1))\\
&=&\frac{-2\pi\ii}{\lambda-\lambda_1}\left(\frac{dq(\lambda_1)+(\lambda-\lambda_1)d\wt{q}(\lambda)}{\lambda}-\frac{dq(\lambda_1)}{\lambda_1}\right)\\
&=&-2\pi\ii\left(\frac{d\wt{q}(\lambda)}{\lambda}-\frac{dq(\lambda_1)}{\lambda\lambda_1}\right).
\end{eqnarray*}

\begin{eqnarray*}
d_{\xx}\wh{M}_{11}&=&\frac{-2\pi\ii\,\lambda e^{\ii\theta}}{\lambda-\lambda_2}\left(
\frac{d\wt{q}(\lambda_2)+(\lambda-\lambda_2)d\wh{q}(\lambda)}{\lambda}-\frac{dq(\lambda_1)}{\lambda\lambda_1}-\frac{d\wt{q}(\lambda_2)}{\lambda_2}+\frac{dq(\lambda_1)}{\lambda_1\lambda_2}\right)\\
&=&-2\pi\ii\,e^{\ii\theta}\left(d\wh{q}(\lambda)-\frac{d\wt{q}(\lambda_2)}{\lambda_2}
+\frac{dq(\lambda_1)}{\lambda_1\lambda_2}\right).
\end{eqnarray*}

\noindent
By decomposing the other parameters $q$ and $r$ in the same way and we obtain similar formulas for
the other entries of $d_{\xx}\wt{M}$ and $d_{\xx}\wh{M}$.
Let

\begin{equation}
\begin{split}
\cal{E}_1(t,\xx)&=\wh{M}_{11}(t,\xx)+\wh{M}_{11}(t,\xx)^*\in\cal{W}\\
\cal{E}_2(t,\xx)&=\wh{M}_{12}(t,\xx)+\wh{M}_{21}(t,\xx)^*\in\cal{W}\\
\cal{E}_3(t,\xx)&=\left(M_{11}(t,\xx)(1)-1,M_{12}(t,\xx)(1),M_{21}(t,\xx)(1)\right)\in\C^3\\
\cal{E}_4(t,\xx)&=\left(\wt{M}_{11}(t,\xx)(1),\wt{M}_{12}(t,\xx)(1),\wt{M}_{21}(t,\xx)(1)\right)\in\C^3.
\end{split}
\end{equation}
\noindent
The Monodromy Problem \eqref{general-monodromy-pb2} is then equivalent to
$\cal{E}_k(t,\xx)=0$ for $1\leq k\leq 4$.
To put everything together we define

$$\cal{F}(t,\xx)=\left[\cal{E}_1^+,\cal{E}_2^+,(\cal{E}_2^-)^*,\cal{E}_3,\cal{E}_4,\cal{E}_2^0,p^0,\Re(e^{-\ii\theta}q^0)+\ii \Re(\cal{E}_1^0)\right](t,\xx)
\quad\in(\cal{W}^{>0})^3\times\C^9.$$
By Equation \eqref{eq-aequivp}, Problem \eqref{pb-appendix} is equivalent to $\cal{F}(t,\xx)=0$.
Indeed, since $\cal{E}_1=\cal{E}_1^*$ we have 
$\cal{E}_1=0$ is equivalent to
$\cal{E}_1^+=0$ and $\Re(\cal{E}_1^0)=0$.
\begin{lemma}
The derivative of $\cal{F}$ at $(0,0)$ with respect to the parameters

$$(\wh{p}^+,\wh{q}^+,\wh{r}^+,p(\lambda_1),q(\lambda_1),r(\lambda_1),
\wt{p}(\lambda_2),\wt{q}(\lambda_2),\wt{r}(\lambda_2),\wh{p}^0,\wh{q}^0,\wh{r}^0)$$
is an $\R$-linear automorphism of $(\cal{W}^{>0})^3\times\C^9$.
\end{lemma}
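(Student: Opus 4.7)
The plan is to compute the partial differential of $\cal{F}$ at $(t,\xx)=(0,0)$ from the formula of Lemma \ref{appendix-lemma1} for $d_{\xx}M$ together with its consequences for $d_{\xx}\wt{M}$ and $d_{\xx}\wh{M}$. The twelve parameters split naturally into four groups: the positive modes $(\wh{p}^+,\wh{q}^+,\wh{r}^+)\in(\cal{W}^{>0})^3$, the point evaluations $(p(\lambda_1),q(\lambda_1),r(\lambda_1))\in\C^3$, the point evaluations $(\wt{p}(\lambda_2),\wt{q}(\lambda_2),\wt{r}(\lambda_2))\in\C^3$, and the zero modes $(\wh{p}^0,\wh{q}^0,\wh{r}^0)\in\C^3$. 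I will show that, after reordering, the twelve components of $\cal{F}$ depend on these groups in a block-triangular way, with invertible diagonal blocks, so that the full derivative is an $\R$-linear automorphism.

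For the positive-mode block the key observation is that each entry of $d_{\xx}\wh{M}$ is, as a function of $\lambda$, a function in $\cal{W}^{\geq 0}$ plus a constant involving only $dq(\lambda_1),d\wt{q}(\lambda_2)$ (and analogously for $p,r$). Hence the positive-mode projections $\cal{E}_1^+,\cal{E}_2^+,(\cal{E}_2^-)^*$ depend only on $(\wh{q}^+,\wh{r}^+,\wh{p}^+)$, and the corresponding sub-Jacobian is diagonal with nonzero entries of the form $\pm 2\pi\ii\,e^{\ii\theta}$, in the same spirit as the computation in Section \ref{sec:implicit}. The blocks $\cal{E}_3$ and $\cal{E}_4$ are treated similarly using the matrix formula of Lemma \ref{appendix-lemma1} and its consequence for $d_{\xx}\wt{M}(\lambda_2)$: the Jacobian of $\cal{E}_3$ with respect to $(p(\lambda_1),q(\lambda_1),r(\lambda_1))$ is $\frac{2\pi\ii}{\lambda_1}$ times an invertible diagonal matrix, and then, modulo this first solve, the Jacobian of $\cal{E}_4$ with respect to $(\wt{p}(\lambda_2),\wt{q}(\lambda_2),\wt{r}(\lambda_2))$ is $\frac{2\pi\ii}{\lambda_2}$ times an invertible diagonal matrix.

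The fourth block is the delicate one, as it mixes real and complex constraints. Using the decomposition $q^0=q(\lambda_1)-\lambda_1\wt{q}(\lambda_2)+\lambda_1\lambda_2\wh{q}^0$ (and analogously for $p,r$), together with the identity $\lambda_1\lambda_2=e^{2\ii\theta}$, each of the three zero-mode equations reduces to a condition on exactly one zero-mode parameter once the earlier blocks are solved: $p^0=0$ determines $\wh{p}^0$ through the nonzero factor $e^{2\ii\theta}$; $\cal{E}_2^0=0$ determines $\wh{r}^0$ through the $d\wh{r}^0$-contribution to $d\wh{M}_{12}^0$; and the complex-valued quantity $\Re(e^{-\ii\theta}q^0)+\ii\,\Re(\cal{E}_1^0)$ packages two $\R$-linear constraints on $\wh{q}^0$, one proportional to $\Re(e^{\ii\theta}\wh{q}^0)$ and one, coming from $\Re(\cal{E}_1^0)=2\,\Re(d\wh{M}_{11}^0)$, proportional to $\Im(e^{\ii\theta}\wh{q}^0)$; these are $\R$-linearly independent, so together they form an $\R$-linear automorphism of $\C$.

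I expect the main obstacle to be precisely this $\R$-linear bookkeeping in the fourth block: one has to check carefully that the two scalar real conditions packaged into $\Re(e^{-\ii\theta}q^0)+\ii\,\Re(\cal{E}_1^0)$ pick out two independent real functionals on the two real dimensions of $\wh{q}^0$. Granted this, the block-triangular structure gives the $\R$-linear isomorphism property for $d_{\xx}\cal{F}$, the Implicit Function Theorem applies to $\cal{F}(t,\xx)=0$ at $(0,0)$, and since $\eta_0$ is already holomorphic at $z=0$ the tuple $\xx\equiv 0$ is trivially a solution for all $t$; uniqueness then forces $\xx_t\equiv 0$ for $t$ in a neighbourhood of $0$, which is exactly what Theorem \ref{thm:appendix} asserts.
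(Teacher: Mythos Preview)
Your proposal is correct and follows essentially the same strategy as the paper: you first isolate the $(\cal{W}^{>0})^3$ block, obtaining $d(\cal{E}_1^+,\cal{E}_2^+,(\cal{E}_2^-)^*)=2\pi\ii\,e^{\ii\theta}(-d\wh{q}^+,d\wh{r}^+,-d\wh{p}^+)$ exactly as in the paper, and then treat the remaining $\C^9$ block. The only cosmetic difference is that the paper disposes of the $\C^9$ block by a direct kernel computation (taking $X\in\mathrm{Ker}(L)$ and successively forcing its components to vanish via $\cal{E}_3$, $\cal{E}_4$, then $p^0$, $\Re(e^{-\ii\theta}q^0)$, $\cal{E}_2^0$, $\Re(\cal{E}_1^0)$), whereas you phrase the same chain of implications as block-triangularity; the identities you use---in particular $q^0=q(\lambda_1)-\lambda_1\wt{q}(\lambda_2)+e^{2\ii\theta}\wh{q}^0$ and the pairing of $\Re(e^{-\ii\theta}q^0)$ with $\Re(e^{\ii\theta}\wh{q}^0)$ and of $\Re(\cal{E}_1^0)$ with $\Im(e^{\ii\theta}\wh{q}^0)$---are precisely those appearing in the paper's proof.
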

\noindent
\begin{proof}
We have

$$d(\cal{E}_1^+,\cal{E}_2^+,(\cal{E}_2^-)^*)=2\pi\ii\,e^{\ii\theta}(-d\wh{q}^+,d\wh{r}^+,-d\wh{p}^+),$$
so the derivative of $(\cal{E}_1^+,\cal{E}_2^+,(\cal{E}_2^-)^*)$ with respect to $(\wh{q}^+,\wh{r}^+,
\wh{p}^+)$ is an automorphism of $(\cal{W}^{>0})^3$.
Let $L$ be the partial derivative of the remaining components of $\cal{F}$ with respect to the remaining variables.
If suffices to prove that $L$ is an automorphism of $\C^9$. Let

$$X=(P(\lambda_1),Q(\lambda_1),R(\lambda_1),\wt{P}(\lambda_2),\wt{Q}(\lambda_2),\wt{R}(\lambda_2),\wh{P}^0,\wh{Q}^0,\wh{R}^0)\in\mbox{Ker}(L).$$
Then

$$d\cal{E}_3 X=2\pi\ii(-Q(\lambda_1),R(\lambda_1),-P(\lambda_1))\quad\Rightarrow\quad P(\lambda_1)=Q(\lambda_1)=R(\lambda_1)=0,$$

$$d\cal{E}_4 X=\frac{2\pi\ii}{\lambda_2}(-\wt{Q}(\lambda_2),\wt{R}(\lambda_2),-\wt{P}(\lambda_2))
\quad\Rightarrow\quad \wt{P}(\lambda_2)=\wt{Q}(\lambda_2)=\wt{R}(\lambda_2)=0.$$
From

$$p(\lambda)=p(\lambda_1)+(\lambda-\lambda_1)\wt{p}(\lambda_2)+(\lambda-\lambda_1)(\lambda-\lambda_2)\wh{p}(\lambda)$$
we obtain

$$p^0=p(\lambda_1)-\lambda_1\wt{p}(\lambda_2)+e^{2\ii\theta}\wh{p}^0.$$
Hence

$$\wh{P}^0=0\quad\mbox{ and } \quad\Re(e^{\ii\theta}\wh{Q}^0)=0.$$
Then

$$d\cal{E}_2^0 X=2\pi\ii\,e^{\ii\theta}\wh{R}^0\quad\Rightarrow\quad\wh{R}^0=0$$
$$\Re(d\cal{E}_1^0 X)=4\pi \Im(e^{\ii\theta}\wh{Q}^0)\quad\Rightarrow \wh{Q}^0=0.$$
Hence $X=0$ so $L$ is an automorphism of $\C^9$.

\end{proof}

\noindent
By Implicit Function Theorem, Problem
\eqref{pb-appendix} uniquely determines $\xx$ as a function of $t$ for $(t,\xx)$ in a neighbourhood of $(0,0)$.
By Point 2 of Lemma \ref{appendix-lemma1} the unique solution is given by $\xx\equiv0$,
proving Theorem \ref{thm:appendix}.\end{proof}

\end{document}